\def \A {{\mathcal{A}}}
\def \F {{\mathcal{F}}}
\def \G {{\mathcal{G}}}
\def \X {{\mathcal{X}}}
\def \C {{\mathcal{C}}}
\def \P {{\mathbb{P}}}
\def \R {{\mathbb{R}}}
\def \D {{\mathcal{D}}}
\def \T {{\mathbb{T}}}
\newtheorem{theorem}{Theorem}[section]
\newtheorem{lemma}[theorem]{Lemma}
\newtheorem{definition}[theorem]{Definition}
\newtheorem{remark}[theorem]{Remark}
\newtheorem{proposition}[theorem]{Proposition}
\newtheorem{example}[theorem]{Example}
\newtheorem{ass}[theorem]{Assumption}
\numberwithin{equation}{section}
\newcommand{\esp}[2][\mathbb E] {#1\left[#2\right]}
\newcommand{\ud}{\mathrm{d}}
\newcommand{\ds}{\displaystyle}
\title{
Optimal consumption policies in illiquid markets\thanks{This work
is supported partly  by the Europlace Institute of Finance.} }
\author{Alessandra Cretarola$^{1)}$, Fausto Gozzi$^{1)}$, Huy\^en
Pham$^{2),3)}$, Peter Tankov$^{2)}$}
\date{}
\begin{document}
\maketitle {\noindent \small
\begin{tabular}{llll}
$^{1)}$ & Dipartimento di Scienze Economiche                &  $^{2)}$ & Laboratoire de Probabilités et \\
        & ed Aziendali - Facoltà di Economia,               &          & Modelèles Aléatoires,\\
        & Università LUISS Guido Carli,                 &          & CNRS, UMR 7599\\
        & viale Romania 32, 00197 Roma.                      &          & Université Paris 7,  \\
        & Email: acretarola@luiss.it,                         &  & Email: pham@math.jussieu.fr\\
        & fgozzi@luiss.it                              &          & peter.tankov@polytechnique.org \\
        &                                               &     $^{3)}$     &  CREST-ENSAE,  \\
        &                                              &                        & and Institut Universitaire de France  \\[1ex]
\end{tabular} }

\begin{abstract}
We investigate optimal consumption policies in the liquidity risk
model introduced in~\cite{pt1}.
Our main result is to derive smoothness $C^1$  results for the
value functions of the portfolio/consumption choice problem. As an
important consequence,  we can prove the existence of the optimal
control (portfolio/consumption strategy) which we characterize
both in feedback form  in terms of the derivatives of the value
functions and as the solution of a second-order ODE. Finally,
numerical illustrations of the behavior of optimal consumption
strategies between two trading dates are given.
\end{abstract}

\vspace{5mm}

\noindent {\bf Key words~:} Illiquid market, optimal consumption,
integrodifferential equations, viscosity solutions, semiconcavity,
sub(super) differentials, optimal control.

\vspace{5mm}

\noindent {\bf  JEL Classification~:} G11

\vspace{2mm}

\noindent {\bf MSC Classification (2000)~:} 49K22, 49L25,  35F20,
91B28.

\newpage

\section{Introduction}

We investigate the optimal consumption policies in the
portfolio/consumption choice problem introduced in~\cite{pt1}.  In
this model, the investor has access to a market in which an
illiquid asset (stock or fund) is traded. The price of the asset
can be observed and trade orders can be passed only at random
times given by an exogenous Poisson process. These times model the
arrival of buy/sell orders in an illiquid market, or the dates on
which the results of a hedge fund are published. More generally,
these times may correspond to the dates on which the performance
of certain investment projects becomes known. The investor is also
allowed to consume (or distribute dividends to shareholders)
continuously from the bank account and the objective is to
maximize the expected discounted utility from consumption. The
resulting optimization problem is a nonstandard mixed
discrete/continuous time stochastic control problem, which leads
via the dynamic programming principle to a coupled system of
nonlinear integro-partial differential equations (IPDE).

In~\cite{pt}, the authors proved  that the value functions to this
stochastic control problem are characterized as the unique
viscosity solutions to the corresponding coupled IPDE. This
characterization makes the computation of value functions possible
(see~\cite{pt1}), but it does not yield the optimal consumption
policies in explicit form. In this paper, we go beyond the
viscosity property, and focus on the regularity of the value
functions. Using arguments of (semi)concavity and the strict
convexity of the Hamiltonian for the IPDE in connection with
viscosity solutions, we show that the value functions are
continuously differentiable. This regularity result is obtained
partly by adapting a technique introduced in~\cite{cs1} (see
also~\cite[p. 80]{bd}) and partly by a kind of bootstrap argument
that exploits carefully the special structure of the problem. This
allows then to get the existence of an optimal control  through a
verification theorem and to produce two characterizations of the
optimal consumption strategy: in feedback form in terms of the
classical derivatives of the value functions, and as the solution
of the Euler-Lagrange ordinary differential equation. We then use
these characterizations to study the properties of the optimal
consumption policies and to produce numerical examples, both in
the stationary and in the nonstationary case.

Portfolio optimization problems with discrete trading dates were
studied by several authors, but the profile of optimal consumption
strategies between the trading interventions has received little
attention so far. Matsumoto~\cite{mat06} supposes that the trades
succeed at the arrival times of an exogenous Poisson process but
does not allow for consumption. Rogers~\cite{rogers01} considers
an investor who can trade at discrete times and assumes that the
consumption rate is constant between the trading dates. Finally,
Rogers and Zane~\cite{rogzan02} allow the investor to change the
consumption rate between the trading dates and derive the HJB
equation for the value function but do not compute the optimal
consumption policy.

The rest of the paper is structured as follows. In section
\ref{model}, we rephrase the main assumptions of the liquidity
risk model introduced in~\cite{pt1}, introduce the necessary
definitions, and recall the viscosity characterization of the
value function. Section \ref{vproperties} establishes some new
properties of the value function such as the scaling relation.
Section \ref{sectionreg} contains the main result of the paper,
proving the regularity of the value function, which is used in
section \ref{verth} to characterize and study the optimal
consumption policies.  Some numerical illustrations depict the
behavior of  the consumption policies between two trading dates.
The technical proofs of some lemmas and propositions can be found
in the appendix.

\section{Formulation of the problem}\label{model}

\noindent Let us fix a probability space $(\Omega,\F,\P)$ endowed
with a filtration $\mathbb F=(\F_t)_{t\geq 0}$ satisfying the
usual conditions. All stochastic processes involved in this paper
are defined on the stochastic basis $(\Omega,\F,\mathbb
F,\P)$.\\
We consider a model of an illiquid market where the investor can
observe the positive stock price process $S$ and trade only at
random times $\{\tau_k\}_{k\geq 0}$ with
$\tau_0=0<\tau_1<\ldots<\tau_k<\ldots$. For simplicity, we assume
that $S_0$ is known and we denote by
$$
Z_k=\frac{S_{\tau_k}-S_{\tau_{k-1}}}{S_{\tau_{k-1}}},\quad k \geq
1,
$$
the observed return process valued in $(-1,+\infty)$, where we set
by convention $Z_0$ equal to some fixed constant.\\
The investor may also consume continuously from the bank account
(the interest rate $r$ is assumed w.l.o.g. to be zero) between two
trading dates. We introduce the continuous observation filtration
$\mathbb G^c=(\G_t)_{t \geq 0}$ where:
$$
\G_t=\sigma\{(\tau_k,Z_k):\tau_k \leq t)\},
$$
and the discrete observation filtration $\mathbb
G^d=(\G_{\tau_k})_{k \geq
0}$. Notice that $\G_t$ s trivial for $t<\tau_1$.\\
A control policy is a mixed discrete-continuous process
$(\alpha,c)$, where $\alpha=(\alpha_k)_{k \geq 1}$ is real-valued
$\mathbb G^d$-predictable, i.e. $\alpha_k$ is
$\G_{\tau_{k-1}}$-measurable, and $c=(c_t)_{t \geq 0}$ is a
nonnegative $\mathbb G^c$-predictable process: $\alpha_k$
represents the amount of stock invested for the period
$(\tau_{k-1},\tau_k]$ after observing the stock price at time
$\tau_{k-1}$, and $c_t$ is the consumption rate at time $t$ based
on the available information. Starting from an initial capital $x
\geq 0$, and given a control policy $(\alpha,c)$, we denote by
$X_k^x$ the wealth of investor at time $\tau_k$ defined by:
\begin{equation}\label{wealth}
X_k^x=x-\int_0^{\tau_k}c_t\ud t+\sum_{i=1}^k\alpha_iZ_i, \quad k
\geq 1, \quad X_0^x=x.
\end{equation}
\begin{definition}
Given an initial capital $x \geq 0$, we say that a control policy
$(\alpha,c)$ is {\em admissible}, and we denote $(\alpha,c) \in
\A(x)$ if
\begin{equation*}
X_k^x \geq 0, \quad {\rm a.s.}\quad \forall k \geq 1.
\end{equation*}
\end{definition}
\noindent According to~\cite{pt1,pt}, we assume the following
conditions on $(\tau_k,Z_k)$ stand in force from now on.
\begin{ass}\label{H}
\begin{itemize}
\item[]
\item[a)] $\{\tau_k\}_{k\geq 1}$ is the sequence of jumps
of a Poisson process with intensity $\lambda$.

\item[b)] (i) For all $k \geq 1$, conditionally on the
interarrival time $\tau_k-\tau_{k-1}=t\in \R_+$, $Z_k$ is
independent from $\{\tau_i,Z_i\}_{i<k}$ and has a distribution
denoted by $p(t,\ud z)$.\\
(ii) For all $t\geq 0$, the support of $p(t,\ud z)$ is
\begin{itemize}
\item[-] either an interval with interior equal to $(-\underline z,\bar
z)$, $\underline z \in (0,1]$ and $\bar z \in (0,+\infty]$;
\item[-] or it is finite equal to $\{-\underline z,\ldots,\bar
z\}$, $\underline z \in (0,1]$ and $\bar z \in (0,+\infty)$.
\end{itemize}
\item[c)] $\int zp(t,\ud z) \geq 0$, for all $t\geq 0$,
and there exist some $k \in \R_+$ and $b \in \R_+$, such that
$$
\int(1+z)p(t,\ud z) \leq ke^{bt},\quad \forall t \geq 0.
$$
\item[d)] The following continuity condition is fulfilled by the measure
$p(t,\ud z)$:
$$
\lim_{t\to t_0}\int w(z)p(t,\ud z)=\int w(z)p(t_0,\ud z), \quad
\forall t_0 \ge 0,
$$
for all measurable functions $w \in (-\underline z,\bar z)$ with
linear growth condition.
\end{itemize}
\end{ass}
\noindent The following simple but important examples illustrate
Assumption \ref{H}.

\begin{example}\label{black}
$S$ is extracted from a Black-Scholes model: $\ud S_t=bS_t\ud
t+\sigma S_t \ud W_t$, with $b \geq 0$, $\sigma >0$. Then $p(t,\ud
z)$ is the distribution of
$$
Z(t)=\exp \left[\left(b-\frac{\sigma^2}{2}\right)t+\sigma
W_t\right]-1,
$$
with support $(-1,+\infty)$ and condition c) of Assumption \ref{H}
is clearly satisfied, since in this case $\int(1+z)p(t,\ud
z)=\esp{\exp\left((b-\sigma^2/2)t+\sigma W_t\right)}=e^{bt}$.
\end{example}

\begin{example} \label{exsta}
$Z_k$ is independent of the waiting times $\tau_k-\tau_{k-1}$, in
which case its distribution $p(\ud z)$ does not depend on $t$. In
particular $p(\ud z)$ may be a discrete distribution with support
$\{z_0,\ldots,z_d\}$ such that $\underline z=-z_0 \in (0,1]$ and
$z_d=\bar z \in (0,+\infty)$.
\end{example}

\noindent We are interested in the optimal portfolio/consumption
problem:
\begin{equation} \label{vdef}
v(x)= \sup_{(\alpha,c)\in \A(x)}\esp{\int_0^{+\infty}e^{-\rho
t}U(c_t)\ud t},\quad x \geq 0,
\end{equation}
where $\rho$ is a positive discount factor and $U$ is an utility
function defined on $\R_+$. We introduce the following assumption:
\begin{ass}\label{h2}
The function $U$ is strictly increasing, strictly concave and
$C^1$ on $(0,+\infty)$ satisfying $U(0)=0$ and the Inada
conditions $U^\prime(0^+)=+\infty$ and $U^\prime(+\infty)=0$.
Moreover, $U$ satisfies the following growth condition: there
exists $\gamma \in (0,1)$ s.t.
\begin{equation} \label{growthcondition}
U(x) \leq K_1x^\gamma, \quad x \geq 0,
\end{equation}
for some positive constant $K_1$. In addition, condition (4.1)
of~\cite{pt} is satisfied, i.e.
\begin{equation*}
\rho > b\gamma + \lambda \left(\frac{k^\gamma}{\underline
z^\gamma}-1\right),
\end{equation*}
where $\gamma \in (0,1)$ and $k,b \in \R_+$ are provided by
Assumption \ref{H}.
\end{ass}
\noindent We denote by $\tilde U$ the convex conjugate of $U$,
i.e.
\begin{equation*} 
\tilde U(y)=\sup_{x>0}[U(x)-xy], \quad y \geq 0.
\end{equation*}
We note that $\tilde U$ is strictly convex under our assumptions
(see Theorem 26.6, Part V in~\cite{r}).
\begin{remark}
In~\cite{pt1,pt}, $U$ is supposed to be nondecreasing and concave
while here $U$ is strictly increasing and strictly convex. This
assumption is not very restrictive, since the most common utility
functions (like the ones of the CRRA type) satisfy it. \\
The main reason of this new hypothesis is that it implies the
strict concavity of the function $\tilde U$, which is a key
assumption to get the regularity of the value functions to our control problem. 
\end{remark}
\noindent Following~\cite{pt}, we consider the following version
of the dynamic programming principe (in short DPP) adapted to our
context
\begin{equation}\label{dpp}
v(x)=\sup_{(\alpha,c)\in \A(x)}\esp{\int_0^{\tau_1}e^{-\rho
t}U(c_t)\ud t+e^{-\rho \tau_1}v\left(X_1^x\right)}, \quad
\tau_1>0.
\end{equation}
This DPP is proved rigorously in Appendix of~\cite{pt}. From the
expression \eqref{wealth} of the wealth, and the measurability
conditions on the control, the above dynamic programming relation
is written as
\begin{equation}\label{vbis}
v(x)=\sup_{(a,c)\in \A_d(x)}\esp{\int_0^{\tau_1}e^{-\rho
t}U(c_t)\ud t+e^{-\rho \tau_1}v\left(x-\int_0^{\tau_1}c_t\ud t+a
Z_1\right)},
\end{equation}
where $\A_d(x)$ is the set of pairs $(a,c)$ with $a$ deterministic
constant, and $c$ a deterministic nonnegative process s.t. $a \in
[-x/\bar z,x/\underline z]$ and
\begin{equation}\label{la}
\int_0^tc_u\ud u \leq x-l(a) \quad {\rm i.e.}\quad
x-\int_0^tc_u\ud u+az \geq 0, \quad \forall t \geq 0,\ \forall z
\in (-\underline z, \bar z),
\end{equation}
where $l(a)=\max(a\underline z,-a\bar z)$ with the convention that
$\max (a\underline z,-a\bar z)=a\underline z$ when $\bar
z=+\infty$ (see Remark 2.3 of~\cite{pt1,pt} for further details).
Given $a \in [-x/\bar z,x/\underline z]$, we denote by $\C_a(x)$
the set of deterministic nonnegative processes satisfying
\eqref{la}. Moreover under conditions a) and b) of Assumption
\ref{H}, it is possible to write more explicitly the
right-hand-side of \eqref{vbis}, so that:
$$
v(x)=\sup_{\scriptsize \begin{array}{cc} a \in \big[-\frac{x}{\bar
z},\frac{x}{\underline z}\big]\\
c \in \C_a(x)\end{array}}\int_0^{+\infty}
e^{-(\rho+\lambda)t}\Big[U(c_t)+\lambda \int
v\Big(x-\int_0^tc_s\ud s+az\Big)p(t,\ud z)\Big]\ud t
$$
(see the details in Lemma 4.1 of~\cite{pt}). Let
$$
\D=\R_+ \times \X \quad \mbox{with}\quad \X=\left\{(x,a)\in
\R_+\times A\ :x \geq l(a)\right\},
$$
by setting $A=\R$ if $\bar z < +\infty$ and $A=\R_+$ if $\bar
z=+\infty$. Then, according to~\cite{pt1,pt}, we introduce the
dynamic auxiliary control problem: for $(t,x,a) \in \D$
\begin{equation}\label{vhat}
\hat v(t,x,a)=\sup_{c \in \C_a(t,x)}\int_t^{+\infty}
e^{-(\rho+\lambda)(s-t)}\left[U(c_s)+\lambda\int
v\left(Y_s^{t,x}+az\right)p(s,\ud z)\right]\ud s,
\end{equation}
where $\C_a(t,x)$ is the set of deterministic nonnegative
processes $c=(c_s)_{s\geq t}$, such that
$$
\int_t^sc_u\ud u\leq x-l(a),\quad {\rm i.e.}\quad Y_s^{t,x}+az\geq
0,\quad \forall s \geq t,\ \forall z\in (\underline z,\bar z),
$$
and $Y^{t,x}$ is the deterministic controlled process by $c \in
\C_a(t,x)$:
$$
Y_s^{t,x}=x-\int_t^sc_u\ud u,\quad s\geq t.
$$
In particular if we consider the function $g:\D \longrightarrow
\R_+$ defined by:
\begin{equation}\label{gfunction}
g\left(t,x,a\right):=\lambda\int v\left(x+az\right)p(t,\ud z),
\end{equation}
we can rewrite \eqref{vhat} as follows
\begin{equation}\label{vhat1}
\hat v(t,x,a)=\sup_{c \in \C_a(t,x)}\int_t^{+\infty}
e^{-(\rho+\lambda)(s-t)}\left[U(c_s)+g\left(s,Y_s^{t,x},a\right)\right]\ud
s.
\end{equation}
We know that the original value function is related to the
auxiliary optimization problem by:
\begin{equation} \label{valuev}
v(x)=\sup_{a \in \left[-x/\bar z,x/\underline z\right]}\hat
v(0,x,a).
\end{equation}
The Hamilton-Jacobi (in short HJ) equation associated to the
deterministic problem \eqref{vhat} is the following Integro
Partial Differential Equation (in short IPDE):
\begin{equation}\label{ipde1}
(\rho + \lambda)\hat v(t,x,a)-\frac{\partial \hat
v(t,x,a)}{\partial t}-\tilde U\left(\frac{\partial \hat
v(t,x,a)}{\partial x}\right)-\lambda\int v(x+az)p(t,\ud z)=0,
\end{equation}
with $(t,x,a)\in \D$. In terms of the function $g$:
\begin{equation}\label{ipde}
(\rho + \lambda)\hat v(t,x,a)-\frac{\partial \hat
v(t,x,a)}{\partial t}-\tilde U\left(\frac{\partial \hat
v(t,x,a)}{\partial x}\right)-g(t,x,a)=0,\quad (t,x,a)\in \D.
\end{equation}
\noindent In~\cite{pt}, the authors have already proved some basic
properties of the value function $\hat v$ as finiteness,
concavity, monotonicity and continuity on $\D$ (see Corollary 4.1
and Proposition 4.2).
In particular the authors have characterized the value function
through its dynamic programming equation by means of viscosity
solutions (see Theorem 5.1). \\
Our aim is to prove the smoothness of the value function $\hat v$
in order to get a verification theorem that provides the existence
(and uniqueness) of the optimal control feedback. We first prove
some further properties of the value functions $(v,\hat v)$ (as
strict monotonicity, uniform continuity on $\D$: see Section
\ref{vproperties}. Then we will study the regularity in the
stationary case, i.e. when $\hat v$ does not depend on $t$.
Finally we will extend the results to the general case. In
particular we will provide some regularity properties by means of
semiconcavity and
bilateral solutions.  \\
\indent It is helpful to recall the following definitions and
basic results from nonsmooth analysis concerning the generalized
differentials.
\begin{definition}
Let $u$ be a continuous function on an open set $D \subset
\Omega$. For any $y \in D$, the sets
\begin{align*}
D^-u(y)&=\bigg\{p\in \Omega:\liminf_{z \in D, z\to y}\frac{u(z)-u(y)-\langle p,z-y\rangle}{|z-y|}\geq 0\bigg\},\\
D^+u(y)&=\bigg\{p\in \Omega:\limsup_{z \in D, z\to
y}\frac{u(z)-u(y)-\langle p,z-y\rangle}{|z-y|}\leq 0\bigg\}
\end{align*}
are called respectively, the (Fréchet) {\em superdifferential} and
{\em subdifferential} of $u$ at $y$.
\end{definition}
\noindent The next lemma provides a description of $D^+u(x)$,
$D^-u(x)$ in terms of test functions.
\begin{lemma} \label{test}
Let $u \in C(D)$, $D \subset \Omega$ open set. Then,
\begin{enumerate}
\item $p \in D^+u(y)$ if and only if there exists $\varphi \in
C^1(D)$ such that $D\varphi(y)=p$ and $u-\varphi$ has a local
maximum at $y$;
\item $p \in D^-u(y)$ if and only if there exists $\varphi \in
C^1(D)$ such that $D\varphi(y)=p$ and $u-\varphi$ has a local
minimum at $y$.
\end{enumerate}
\end{lemma}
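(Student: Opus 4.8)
This is a classical characterization from the theory of viscosity solutions, and the plan is to prove statement (1) for the superdifferential; statement (2) then follows immediately by applying (1) to $-u$, since $p \in D^-u(y)$ if and only if $-p \in D^+(-u)(y)$, and $u-\varphi$ has a local minimum at $y$ if and only if $(-u)-(-\varphi)$ has a local maximum at $y$. So I would reduce the whole lemma to the single equivalence for $D^+$.

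For the sufficiency (``if'') direction, suppose $\varphi \in C^1(D)$ with $D\varphi(y)=p$ and $u-\varphi$ has a local maximum at $y$. Then $u(z)-u(y)\le\varphi(z)-\varphi(y)$ for $z$ near $y$, and a first-order Taylor expansion gives $\varphi(z)-\varphi(y)=\langle p, z-y\rangle+o(|z-y|)$. Substituting yields $u(z)-u(y)-\langle p, z-y\rangle\le o(|z-y|)$, so the $\limsup$ in the definition of $D^+u(y)$ is $\le 0$, i.e. $p\in D^+u(y)$. This direction is routine.

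The real work is the necessity direction: given $p\in D^+u(y)$, I must \emph{construct} a $C^1$ test function touching $u$ from above at $y$ with gradient $p$. First I would record the modulus $m(\delta):=\sup\{(u(z)-u(y)-\langle p,z-y\rangle)^+ : |z-y|\le\delta\}$, defined for $\delta\ge0$ small enough that the ball stays in $D$; it is nondecreasing, continuous (a sup of a continuous function over closed balls), $m(0)=0$, and crucially $m(\delta)/\delta\to0$ as $\delta\to0^+$ by the very definition of $D^+u(y)$. To turn $m$ into something admitting a $C^1$ radial majorant, set $n(\delta)=m(\delta)/\delta$ for $\delta>0$ and $n(0)=0$ (continuous, with $n(\delta)\to0$), let $N(\delta)=\max_{0\le r\le\delta}n(r)$ be its running maximum (continuous, nondecreasing, $N(0)=0$), and define $\eta(\delta)=\int_0^{2\delta}N(s)\,ds$. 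Then $\eta\in C^1$ with $\eta'(\delta)=2N(2\delta)$, so $\eta(0)=\eta'(0)=0$, while $\eta(\delta)\ge\int_\delta^{2\delta}N(\delta)\,ds=\delta N(\delta)\ge\delta n(\delta)=m(\delta)$ and $\eta(\delta)=o(\delta)$. The test function is $\varphi(z)=u(y)+\langle p,z-y\rangle+\eta(|z-y|)$.

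It then remains to verify $\varphi$ works. Since $\eta(|z-y|)=o(|z-y|)$ and $\eta\in C^1$ with $\eta'(0)=0$, the radial term $z\mapsto\eta(|z-y|)$ is $C^1$ near $y$ with vanishing gradient at $y$, so $\varphi\in C^1$ near $y$ with $D\varphi(y)=p$; and $u(z)-\varphi(z)=\big(u(z)-u(y)-\langle p,z-y\rangle\big)-\eta(|z-y|)\le m(|z-y|)-\eta(|z-y|)\le0=(u-\varphi)(y)$, so $u-\varphi$ has a local maximum at $y$. Because ``local maximum'' and the value $D\varphi(y)$ are local notions, I would finally extend $\varphi$ from a small ball around $y$ to a global $C^1$ function on $D$ via a smooth cutoff, leaving it unchanged near $y$. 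I expect the main obstacle to be precisely this necessity step, namely manufacturing a genuinely $C^1$ (not merely continuous) radial majorant of the modulus $m$: a monotone or merely continuous modulus need not be differentiable, and the running-maximum-plus-integral smoothing above is exactly what simultaneously secures $\eta(0)=\eta'(0)=0$ and $\eta\ge m$.
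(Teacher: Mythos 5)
Your proof is correct. Note, however, that the paper does not prove this lemma at all: it simply cites Lemma II.1.7 of Bardi and Capuzzo-Dolcetta \cite{bd}, so what you have written is a self-contained version of the standard argument behind that citation. Your reduction of (2) to (1) via $u\mapsto -u$, and the easy (``if'') direction via the first-order Taylor expansion of $\varphi$, are exactly as in the classical treatment. For the necessity direction your construction is a legitimate variant of the classical one: where \cite{bd} builds the radial majorant as an integral of the form $\int_{r}^{2r}\sigma(t)/t\,\ud t$ of the excess modulus $\sigma$, you first pass to $n(\delta)=m(\delta)/\delta$, take its running maximum $N$, and integrate $N$ over $[0,2\delta]$; both devices serve the same purpose, namely producing an $\eta\in C^1$ with $\eta(0)=\eta'(0)=0$ and $\eta\geq m$, which is indeed the only nontrivial point of the lemma. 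The details check out: $m$ is continuous (sup of a continuous function over closed balls), $m(\delta)/\delta\to 0$ is precisely the $\limsup\leq 0$ condition defining $D^+u(y)$, $N$ is continuous and nondecreasing so the fundamental theorem of calculus gives $\eta'(\delta)=2N(2\delta)$, the chain of inequalities $\eta(\delta)\geq\delta N(\delta)\geq m(\delta)$ is valid, and the radial term $z\mapsto\eta(|z-y|)$ is $C^1$ with vanishing gradient at $y$ because $|\eta'(|z-y|)|=2N(2|z-y|)\to 0$. The final cutoff to extend $\varphi$ from a ball to all of $D$ is also needed and correctly observed, since the statement asks for $\varphi\in C^1(D)$ while maximality and the value of $D\varphi(y)$ are local.
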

\begin{proof}
See Lemma II.1.7 of~\cite{bd} for the proof.
\end{proof}
\noindent As a direct consequence of Lemma \ref{test}, we can
rewrite Definition 5.1 of~\cite{pt} of viscosity solution adapted
to our context, in terms of sub and superdifferentials.

\begin{definition}\label{viscosity}
The pair of value functions $(v,\hat v) \in C_+(\R_+) \times
C_+(\D)$ given in \eqref{vdef}-\eqref{vhat} is a viscosity
solution to \eqref{valuev}-\eqref{ipde} if:
\begin{itemize}
\item[(i)] {\em viscosity supersolution property}: $v(x) \geq \sup_{a \in [-x/\bar z,x/\underline z]}
\hat v(0,x,a)$ and for all $a \in \A$,
\begin{equation} \label{super}
(\rho + \lambda)\hat v(t,x,a)-q-\tilde U(p)-g(t,x,a) \leq 0,
\end{equation}
for all $(q,p) \in \D_{t,x}^-\hat v(t,x,a)$, for all $(t,x,a) \in
\D$.
\item[(ii)] {\em viscosity subsolution property}: $v(x) \leq \sup_{a \in [-x/\bar z,x/\underline z]}
\hat v(0,x,a)$ and for all $a \in \A$,
\begin{equation}\label{sub}
(\rho + \lambda)\hat v(t,x,a)-q-\tilde U(p)-g(t,x,a) \geq 0,
\end{equation}
for all $(q,p) \in \D_{t,x}^+\hat v(t,x,a)$, for all $(t,x,a) \in
\D$.
\end{itemize}
The pair of functions $(v,\hat v)$ will be called a {\em viscosity
solution} of \eqref{valuev}-\eqref{ipde} if \eqref{super} and
\eqref{sub} hold simultaneously.
\end{definition}
\noindent Hence, we can reformulate the viscosity result stated
in~\cite{pt}.
\begin{proposition}
Suppose Assumptions \ref{H} and \ref {h2} stand in force. The pair
of value functions $(v,\hat v)$ defined in
\eqref{vdef}-\eqref{vhat} is the unique viscosity solution to
\eqref{valuev}-\eqref{ipde} in the sense of Definition
\ref{viscosity}.
\end{proposition}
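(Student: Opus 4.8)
The plan is to obtain this proposition as an essentially formal consequence of the viscosity characterization already established in~\cite{pt} (Theorem 5.1). Indeed, Definition~\ref{viscosity}, which is phrased through the sub- and superdifferentials $\D^-_{t,x}\hat v$ and $\D^+_{t,x}\hat v$, is nothing but a restatement of the test-function definition (Definition 5.1 of~\cite{pt}) of viscosity solution for the very same IPDE~\eqref{ipde}; the bridge between the two formulations is Lemma~\ref{test}. Since the coupling/boundary condition relating $v$ and $\hat v$ — namely $v(x)\geq\sup_{a}\hat v(0,x,a)$ in the supersolution part, and the reverse inequality in the subsolution part — is \emph{literally identical} in both formulations, the entire argument reduces to checking that the interior inequalities~\eqref{super} and~\eqref{sub} match the interior inequalities of~\cite{pt}.

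I would first treat the supersolution property. Fix $a$ and a point $(t,x,a)\in\D$, and apply part~2 of Lemma~\ref{test} to the section $(t,x)\mapsto\hat v(t,x,a)$, viewed as a continuous function on the open set $\{x>l(a)\}$. This gives that a pair $(q,p)$ lies in $\D^-_{t,x}\hat v(t,x,a)$ if and only if there exists $\varphi\in C^1$ with $D\varphi(t,x)=(q,p)$ such that $\hat v(\cdot,\cdot,a)-\varphi$ has a local minimum at $(t,x)$. Substituting such a $\varphi$ into the test-function supersolution inequality of~\cite{pt} — which involves exactly the combination $(\rho+\lambda)\hat v-\partial_t\varphi-\tilde U(\partial_x\varphi)-g$ — and using $\partial_t\varphi=q$, $\partial_x\varphi=p$, yields precisely~\eqref{super}; conversely, every $C^1$ test function touching $\hat v$ from below produces a gradient in $\D^-_{t,x}\hat v$, so both inequalities quantify over the same set of $(q,p)$. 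The subsolution property is handled verbatim using part~1 of Lemma~\ref{test}: $(q,p)\in\D^+_{t,x}\hat v$ corresponds to $C^1$ test functions for which $\hat v(\cdot,\cdot,a)-\varphi$ has a local maximum, and substitution turns the test-function subsolution inequality of~\cite{pt} into~\eqref{sub}.

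Once the equivalence of the two definitions is established at every point $(t,x,a)$ and for every admissible $(q,p)$, the two statements single out exactly the same pairs $(v,\hat v)\in C_+(\R_+)\times C_+(\D)$. Existence then follows immediately, since $(v,\hat v)$ is a viscosity solution in the sense of~\cite{pt}; and uniqueness is inherited directly from the comparison/uniqueness result proved there, because any pair satisfying Definition~\ref{viscosity} also satisfies Definition 5.1 of~\cite{pt} and conversely.

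The point requiring care — rather than a genuine obstacle — is the \emph{localization in the variables} $(t,x)$: the differentials in Definition~\ref{viscosity} are taken only in $(t,x)$ with $a$ frozen as a parameter, so one must verify that the test functions furnished by Lemma~\ref{test} can be chosen $C^1$ in $(t,x)$ alone, and that the behaviour at the boundary $\{x=l(a)\}$ of $\X$ (and at $t=0$) is treated exactly as in~\cite{pt}, where the structure of the constraint set $\C_a(t,x)$ dictates that no interior inequality is imposed at such boundary points. With this bookkeeping in place, the equivalence is purely formal and the proposition follows.
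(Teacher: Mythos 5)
Your argument is exactly the one the paper intends: Definition \ref{viscosity} is introduced as a rewriting of Definition 5.1 of~\cite{pt} via the test-function characterization in Lemma \ref{test}, and the proposition's proof is simply the reduction to Theorem 5.1 of~\cite{pt}, which the paper cites in one line. Your proposal just spells out that equivalence in more detail, so it is correct and takes essentially the same route.
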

\begin{proof}
See Theorem 5.1 of~\cite{pt} for a similar proof.
\end{proof}
\section{Some properties of the value functions}\label{vproperties}

In this section we discuss and prove some basic properties (strict
monotonicity, uniform continuity on $\D$) of the value functions
$(v,\hat v)$. We will always suppose Assumptions \ref{H} and
\ref{h2} throughout this section.\\
By Proposition 4.2 of~\cite{pt}, we already know that $v$ is
nondecreasing, concave and continuous on $\R_+$, with $v(0)=0$.
Moreover by Corollary 4.1 of~\cite{pt}, $v$ satisfies a growth
condition, i.e. there exists a positive constant $K$ such that
\begin{equation} \label{vgrowth}
v(x) \leq Kx^\gamma,\quad \forall x \geq 0.
\end{equation}

\noindent Here we provide the following properties on the function
$v$ and $g$ respectively whose proof can be found in Appendix:
\begin{proposition} \label{vstrincr}
The value function $v$ is strictly increasing on $\R_+$.
\end{proposition}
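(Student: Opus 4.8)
The plan is to show that the value function $v$ is strictly increasing by exhibiting, for any $0 \le x_1 < x_2$, an explicit admissible strategy from the larger initial capital $x_2$ that strictly dominates in expected utility any strategy available from $x_1$. The key observation is that extra initial wealth can always be converted into strictly more consumption, and since $U$ is strictly increasing (Assumption \ref{h2}), this yields a strict gain in the objective \eqref{vdef}. First I would fix $x_1 < x_2$ and take any admissible policy $(\alpha, c) \in \A(x_1)$ with wealth process $X_k^{x_1}$. The natural candidate is to use the \emph{same} portfolio strategy $\alpha$ from $x_2$ but to augment the consumption: since the wealth dynamics \eqref{wealth} depend linearly on the initial capital and the extra amount $x_2 - x_1 > 0$ sits in the bank account, one can consume an additional rate on top of $c$ while keeping every $X_k^{x_2} \ge X_k^{x_1} \ge 0$, so admissibility in $\A(x_2)$ is preserved.

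The cleanest way to organize the argument is to consume the surplus $x_2 - x_1$ at a constant extra rate $\varepsilon > 0$ over an initial time window, say on $[0, T]$ with $\varepsilon T \le x_2 - x_1$, added to the baseline consumption $c$. Concretely, I would set $\tilde c_t = c_t + \varepsilon \mathbf{1}_{[0,T]}(t)$ and keep $\tilde\alpha = \alpha$. Then the wealth at each trading date satisfies $X_k^{x_2}(\tilde\alpha,\tilde c) = X_k^{x_1}(\alpha,c) + (x_2 - x_1) - \varepsilon(T \wedge \tau_k) \ge X_k^{x_1}(\alpha,c) \ge 0$, so $(\tilde\alpha,\tilde c) \in \A(x_2)$. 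Because $U$ is strictly increasing, on the set $\{t \le \tau_1\}$ (which has positive probability for every $t$, as $\tau_1$ is exponentially distributed under Assumption \ref{H}a) the integrand strictly increases: $U(\tilde c_t) > U(c_t)$ wherever $\varepsilon \mathbf{1}_{[0,T]}(t) > 0$ and $c_t$ is finite. Taking expectations in \eqref{vdef} and using $e^{-\rho t} > 0$ gives a strictly positive lower bound on the improvement, uniform over the chosen strategy.

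Passing from "strict improvement of one strategy" to "strict inequality of the suprema" requires a small amount of care, which I expect to be the main obstacle. One cannot simply take a supremizing sequence $(\alpha^n, c^n)$ for $v(x_1)$ and add $\varepsilon$, because the resulting gain must be bounded away from zero \emph{independently of $n$}. The clean route is a comparison of the attainable sets: I would show
\begin{equation*}
v(x_2) \;\ge\; v(x_1) + \delta
\end{equation*}
for some fixed $\delta > 0$ by choosing the increment so that the guaranteed extra utility does not degrade along the sequence. Since the baseline consumption rates $c^n_t$ cannot be uniformly large (the growth bound \eqref{vgrowth} on $v$ and the finiteness of the objective control how much consumption can be front-loaded), one gets a uniform lower bound of the form $\esp{\int_0^{T} e^{-\rho t}\big(U(c^n_t+\varepsilon)-U(c^n_t)\big)\mathbf{1}_{\{t\le\tau_1\}}\,\ud t} \ge \delta > 0$, using concavity of $U$ to bound the increment below by $\varepsilon\,U'(c^n_t+\varepsilon)$ and the fact that $c^n_t$ is controlled on the finite window $[0,T]$. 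Taking the supremum over $n$ then yields $v(x_2) \ge v(x_1) + \delta > v(x_1)$, which is the desired strict monotonicity.
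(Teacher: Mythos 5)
Your proof is correct, and its core mechanism is the same as the paper's: finance an extra consumption bump on a finite initial window out of the surplus initial capital, keep the same portfolio, check admissibility via $X_k^{x_2}=X_k^{x_1}+(x_2-x_1)-\varepsilon(T\wedge\tau_k)\ge 0$, and invoke strict monotonicity of $U$. The organization differs in two ways. First, the paper argues by contradiction: using concavity it reduces to the case where $v$ is eventually constant, picks an $\varepsilon$-optimal control at $\bar x$ and adds $\mathbf{1}_{[0,1]}(t)$ to the consumption; you instead prove directly $v(x_2)\ge v(x_1)+\delta$ for arbitrary $x_1<x_2$. Second, and more substantively, you explicitly confront the passage from "strict improvement of one strategy" to "strict inequality of suprema": the paper's displayed chain treats $\esp{\int_0^\infty e^{-\rho t}U(c^\epsilon_t)\,\ud t}$ as equal to $v(\bar x)$ when it is only $\epsilon$-close to it, so as written it yields $v(\tilde x)>v(\bar x)-\epsilon$ rather than a contradiction; your uniform lower bound $\delta$ on the gain along a maximizing sequence is exactly what closes this. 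Your sketch of that bound is sound, with one imprecision worth fixing: the uniform control $\int_0^T c^n_t\,\ud t\le x_1$ comes from the admissibility constraint \eqref{la} (the wealth must remain nonnegative on the positive-probability event $\{\tau_1>T\}$, and $c$ is deterministic before $\tau_1$ since $\G_t$ is trivial there), not from the growth bound \eqref{vgrowth} on $v$; combined with Chebyshev's inequality and the fact that $c\mapsto U(c+\varepsilon)-U(c)$ is nonincreasing by concavity, this gives $\delta>0$ independent of $n$. In short, your route is essentially the paper's, executed more carefully at the one point where care is needed.
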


\noindent Now recall the function $g$ given in \eqref{gfunction}.
\begin{lemma} \label{gconcave}
The function $g$ is:
\begin{itemize}
\item[(i)] continuous in $t \in \R_+$, for every $(x,a) \in \X$;
\item[(ii)] strictly increasing in $x \in[l(a),+\infty)$, for every $a \in \A$ and $t \in \R_+$;
\item [(iii)] concave in $(x,a) \in \X$.
\end{itemize}
If we do not assume condition d) of Assumption \ref{H}, then the
function $g$ is only measurable in $t$ while (ii) and (iii) still
hold.
\end{lemma}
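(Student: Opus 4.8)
The plan is to prove the three properties of $g(t,x,a)=\lambda\int v(x+az)p(t,\ud z)$ by transferring the corresponding properties of $v$ (which are already established) through the integral against the measure $p(t,\ud z)$, handling the $t$-dependence separately via the continuity assumption d) of Assumption \ref{H}.

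\medskip

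\noindent\emph{Continuity in $t$ (part (i)).} Fix $(x,a)\in\X$ and set $w(z):=v(x+az)$. Since $v$ is nondecreasing and satisfies the growth bound \eqref{vgrowth}, $w$ has at most linear growth in $z$ (indeed $v(x+az)\le K(x+az)^\gamma$ with $\gamma\in(0,1)$, so it is dominated by an affine function of $z$ on the support). Hence $w$ is a measurable function of linear growth on $(-\underline z,\bar z)$, and condition d) of Assumption \ref{H} applies directly to give $\lim_{t\to t_0}\int w(z)p(t,\ud z)=\int w(z)p(t_0,\ud z)$, which is exactly continuity of $g(\cdot,x,a)$ at an arbitrary $t_0$. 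The main point to check carefully here is that $v$ composed with $z\mapsto x+az$ really does meet the linear growth hypothesis of d); the concavity-plus-$x^\gamma$ bound on $v$ makes this routine.

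\medskip

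\noindent\emph{Strict monotonicity in $x$ (part (ii)).} By Proposition \ref{vstrincr}, $v$ is strictly increasing on $\R_+$. Fix $a$ and $t$ and take $l(a)\le x_1<x_2$. Then for every $z$ in the support one has $x_1+az\ge 0$ and $x_1+az<x_2+az$, so $v(x_1+az)<v(x_2+az)$ pointwise, while $v(x_1+az)\le v(x_2+az)$ trivially. Integrating the strict inequality against the probability measure $\lambda\,p(t,\ud z)$ preserves strictness because $p(t,\cdot)$ is a (nontrivial) probability measure with support of positive extent; more precisely, since the integrand is strictly larger on a set of full $p(t,\ud z)$-measure, the integrals are strictly ordered. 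This yields $g(t,x_1,a)<g(t,x_2,a)$.

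\medskip

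\noindent\emph{Concavity in $(x,a)$ (part (iii)).} Here I would exploit that $v$ is concave on $\R_+$ and that for each fixed $z$ the map $(x,a)\mapsto x+az$ is affine. Hence $(x,a)\mapsto v(x+az)$ is a composition of a concave function with an affine map, and is therefore concave on the set where $x+az\ge 0$; integrating in $z$ against the nonnegative measure $\lambda\,p(t,\ud z)$ preserves concavity, since a nonnegative mixture of concave functions is concave. One only has to confirm that $\X$ is convex and that for $(x,a)\in\X$ and $z$ in the support the argument $x+az$ stays nonnegative, which follows from the defining constraint $x\ge l(a)=\max(a\underline z,-a\bar z)$. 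The final remark, that dropping condition d) leaves (ii) and (iii) intact but reduces (i) to mere measurability in $t$, is immediate: (ii) and (iii) never used d), and measurability of $t\mapsto\int w(z)p(t,\ud z)$ for the measurable integrand $w$ follows from the measurability of the kernel $p$.

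\medskip

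\noindent The only genuinely delicate step is the preservation of \emph{strict} monotonicity in (ii) under integration: one must argue that the strict pointwise inequality is not washed out by the measure, which is where the nondegeneracy of the support in Assumption \ref{H}(b)(ii) is used. The remaining parts are standard stability properties of concave/continuous functions under integration against a family of measures.
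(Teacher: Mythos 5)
Your proof is correct and follows essentially the same route as the paper's: continuity in $t$ from condition d) of Assumption \ref{H} (with the linear-growth check you make explicit), strict monotonicity in $x$ inherited pointwise from Proposition \ref{vstrincr}, and concavity from composing the concave $v$ with the affine map $(x,a)\mapsto x+az$ on the convex set $\X$ (whose convexity the paper likewise traces to the convexity of $l$). The extra details you supply (the sublinear bound $v(y)\le Ky^\gamma$ justifying the hypothesis of d), and the preservation of strictness under integration against the nonzero measure $p(t,\ud z)$) are points the paper leaves implicit.
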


\noindent To conclude this section, we discuss a property of the
value function $\hat v$. We already know by Proposition 4.2
of~\cite{pt}, that $\hat v$ is concave and continuous in $(x,a)\in
\X$, and that has the following representation on the boundary
$\partial \X$:
\begin{equation}\label{generalboundary}
\hat v(t,x,a)=\int_t^{+\infty}e^{-(\rho+\lambda)(s-t)}g(s,x,a)\ud
s, \quad \forall t\geq 0,\quad \forall (x,a) \in \partial\X.
\end{equation}
In addition, by Corollary 4.1 of~\cite{pt}, we know that there
exists a constant $K$ that provides the following growth estimate:
\begin{equation}\label{hatvgrowth}
\hat v(t,x,a) \leq K\left(e^{bt}x\right)^\gamma, \quad \forall
(t,x,a) \in \D,
\end{equation}
with $\gamma \in (0,1)$ and $b$ is the constant given in condition
c) of Assumption \ref{H}.

\begin{lemma}
The value function $\hat v$ is strictly increasing in $x$, for
every $x \geq l(a)$, given $a \in A$.
\end{lemma}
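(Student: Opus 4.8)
The plan is to exploit that enlarging the initial wealth only relaxes the consumption constraint, combined with the strict monotonicity of $g$ in its space variable. Fix $t \ge 0$ and $a \in A$, take $l(a) \le x_1 < x_2$, and set $h := x_2 - x_1 > 0$. For $c \in \C_a(t,x)$ write the gain functional
\[
J(t,x,a;c) := \int_t^{+\infty} e^{-(\rho+\lambda)(s-t)}\Big[U(c_s) + g\big(s, Y_s^{t,x}, a\big)\Big]\,\ud s ,
\]
so that $\hat v(t,x,a) = \sup_{c \in \C_a(t,x)} J(t,x,a;c)$ by \eqref{vhat1}. First I would observe that $\C_a(t,x_1) \subseteq \C_a(t,x_2)$, since the defining constraint $\int_t^s c_u\,\ud u \le x - l(a)$ is weaker for larger $x$. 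Then, for a fixed $c \in \C_a(t,x_1)$, the two trajectories satisfy $Y_s^{t,x_2} = Y_s^{t,x_1} + h$ while the consumption term $U(c_s)$ is unchanged, so
\[
J(t,x_2,a;c) - J(t,x_1,a;c) = \int_t^{+\infty} e^{-(\rho+\lambda)(s-t)}\Big[g\big(s, Y_s^{t,x_1}+h, a\big) - g\big(s, Y_s^{t,x_1}, a\big)\Big]\,\ud s =: \Delta(c),
\]
and by Lemma \ref{gconcave}(ii) the integrand is strictly positive, hence $\Delta(c) > 0$ for each admissible $c$.

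The crucial step, which I expect to be the main obstacle, is to bound $\Delta(c)$ below by a positive quantity independent of $c$: a merely pointwise strict inequality would only yield $\hat v(t,x_2,a) \ge \hat v(t,x_1,a)$ after passing to the supremum along a maximizing sequence, because the gap could a priori shrink to zero. Here I would use two facts. First, $Y_s^{t,x_1} = x_1 - \int_t^s c_u\,\ud u \le x_1$ since $c \ge 0$. Second, the concavity of $v$ makes, for each fixed $z$, the map $y \mapsto v(y+h+az) - v(y+az)$ non-increasing, and therefore also
\[
y \longmapsto g(s,y+h,a) - g(s,y,a) = \lambda \int \big[v(y+h+az) - v(y+az)\big]\,p(s,\ud z)
\]
non-increasing in $y$. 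Combining these with $Y_s^{t,x_1} \le x_1$ gives $g(s, Y_s^{t,x_1}+h,a) - g(s, Y_s^{t,x_1},a) \ge g(s, x_2, a) - g(s, x_1, a)$ for every $s \ge t$ and every $c \in \C_a(t,x_1)$, whence
\[
\Delta(c) \ge \Delta_0 := \int_t^{+\infty} e^{-(\rho+\lambda)(s-t)}\big[g(s, x_2, a) - g(s, x_1, a)\big]\,\ud s .
\]
Since $v$ is strictly increasing (Proposition \ref{vstrincr}), the integrand $g(s,x_2,a) - g(s,x_1,a) = \lambda\int[v(x_2 + az) - v(x_1 + az)]\,p(s,\ud z)$ is strictly positive for each $s$, so $\Delta_0 > 0$, and it does not depend on $c$.

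Finally I would conclude by taking suprema. For every $c \in \C_a(t,x_1)$ we have $J(t,x_2,a;c) = J(t,x_1,a;c) + \Delta(c) \ge J(t,x_1,a;c) + \Delta_0$, and using $\C_a(t,x_1) \subseteq \C_a(t,x_2)$,
\[
\hat v(t,x_2,a) \ge \sup_{c \in \C_a(t,x_1)} J(t,x_2,a;c) \ge \sup_{c \in \C_a(t,x_1)} J(t,x_1,a;c) + \Delta_0 = \hat v(t,x_1,a) + \Delta_0 > \hat v(t,x_1,a),
\]
the value $\hat v(t,x_1,a)$ being finite by the growth estimate \eqref{hatvgrowth}, which legitimizes adding the constant $\Delta_0$. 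This proves that $\hat v$ is strictly increasing in $x$ on $[l(a),+\infty)$ for every fixed $a \in A$.
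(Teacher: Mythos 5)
Your proof is correct, but it follows a genuinely different route from the paper's. The paper disposes of this lemma in one line by pointing back to the proof of Proposition \ref{vstrincr}: there the argument is by contradiction --- concavity forces a non--strictly--increasing value function to be eventually constant, one then takes an $\epsilon$-optimal control at a point $\bar x$ in the flat region and perturbs it (there by adding a unit of consumption on $[0,1]$, here one would also invoke the strict monotonicity of $g$ in $x$) to produce a strictly better payoff at a larger initial wealth, contradicting flatness. You instead give a direct, quantitative comparison: you fix $x_1<x_2$, reuse every control admissible at $x_1$ for $x_2$, and --- this is the real content of your argument --- produce a gap $\Delta_0>0$ that is \emph{uniform over the controls}, by observing that concavity of $v$ makes $y\mapsto g(s,y+h,a)-g(s,y,a)$ non-increasing while $Y_s^{t,x_1}\le x_1$, so the pointwise gain is bounded below by $g(s,x_2,a)-g(s,x_1,a)>0$. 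You correctly identify that without this uniformity the passage to the supremum would only yield the non-strict inequality; the paper's contradiction scheme sidesteps the uniformity issue by a different mechanism (it only needs to beat a single $\epsilon$-optimal control). Your version has the advantage of being constructive, of isolating exactly which structural facts are used (concavity and strict monotonicity of $v$, hence of $g$ in $x$, plus monotonicity of the admissible sets in $x$), and of not needing the strict monotonicity of $U$ at all; the paper's version is shorter because it recycles an argument already written out in the appendix. Both are valid proofs of the statement.
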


\begin{proof}
The proof follows from the same arguments of the proof of
Proposition \ref{vstrincr} (see Appendix), using the strict
monotonicity of $U$ in $c$ and of $g$ in $x$ respectively.
\end{proof}

\subsection{The scaling relation for power utility}
In the case where the utility function is given by
$$
U(x) = K_1 x^\gamma, \;\;\; 0 < \gamma < 1, 
$$
using the fact that $c \in \mathcal C_a(t,x)$ if and only if
$\beta c \in \mathcal C_{\beta a}(t,\beta x)$ for any
$\beta>0$, we can easily deduce from the decoupled dynamic
programming principle in ~\cite{pt1} a scaling relation for the
value function $v$ and the auxiliary value function $\hat v$:
$$
\hat v(t,\beta x, \beta a) = \beta^\gamma \hat v(t,x,a),\qquad
v(\beta x) = \beta^\gamma v(x).
$$
This shows that the value function has the same form as in the
Merton model (confirmed by the graphs in \cite{pt1})  and that the
optimal investment strategy consists in investing a fixed
proportion of the wealth into the risky asset. In the case $\bar z$ $=$ $\infty$, $a$ is nonnegative and we can therefore reduce
the dimension of the problem and denote 
$$
v(x) = \vartheta_1 x^\gamma,\quad \hat v(t,x,a) = a^\gamma \bar
v(t,\xi),\quad \xi = x/a
$$
The equation satisfied by the auxiliary value function then
becomes
\begin{align*}
&(\rho+\lambda)\bar v - \frac{\partial \bar v}{\partial t} - \tilde
U\left(\frac{\partial \bar v}{\partial \xi}\right) - \lambda
\vartheta_1 \int (\xi+z)^\gamma p(t,dz)=0,
\\
&\vartheta_1 = \sup_{\xi\geq \underline{z}} \xi^{-\gamma}\bar v(0,\xi),
\end{align*}
in the nonstationary case and
\begin{align*}
&(\rho+\lambda)\bar v - \tilde U\left(\frac{\partial \bar
v}{\partial \xi}\right) - \lambda \vartheta_1 \int (\xi+z)^\gamma
p(dz)=0,\\
&\vartheta_1 = \sup_{\xi\geq \underline{z}} \xi^{-\gamma}\bar v(\xi),
\end{align*}
in the stationary case, with
$$
\tilde U(y) = \tilde K_1 y^{-\tilde \gamma},\quad \tilde \gamma =
\frac{\gamma}{1-\gamma}.
$$

\section{Regularity of the value functions} \label{sectionreg}

In this section we investigate the regularity property of the
value functions $(v, \hat v)$ in order to provide a feedback
representation form for the optimal strategies. Throughout the
whole section we will let Assumptions \ref{H} and \ref{h2} stand
in force.

\subsection{The stationary case} \label{regstat}

\noindent We start the study of the regularity with the simple
case when the distribution $p(t,\ud z)$ of the observed return
process $Z_k$, $k \geq 1$, does not depend on $t$, i.e. $p(t,\ud
z)=p(\ud z)$, for every $t \geq 0$, as in Example \ref{exsta}.
Then $g$ and $\hat v$ are independent of $t$ and the IPDE
\eqref{ipde} reduces to the integro ordinary differential equation
(in short IODE) for $\hat v(x,a)$:
\begin{equation}\label{iode}
(\rho + \lambda)\hat v(x,a)-\tilde U\left(\frac{\partial \hat
v(x,a)}{\partial x}\right)-g(x,a)=0,\quad (x,a)\in \X,
\end{equation}
where
\begin{align}
\hat v(x,a)&=\sup_{c \in \C_a(x)}\int_0^{+\infty}
e^{-(\rho+\lambda)s}\left[U(c_s)+\lambda\int
v\left(Y_s^x+az\right)p(\ud z)\right]\ud s \nonumber \\
&=\sup_{c \in \C_a(x)}\int_0^{+\infty}
e^{-(\rho+\lambda)s}\left[U(c_s)+g(Y_s^x,a)\right]\ud s
\label{vhatt}
\end{align}
with
\begin{equation}\label{vv}
v(x)=\sup_{a \in \left[-x/\bar z,x/\underline z\right]}\hat v(x,a)
\end{equation}
All the properties of the value function $\hat v$ discussed in the
previous section still hold for its restriction on the set $\X$.
In particular we have that $\hat v$ given in \eqref{vhatt} is
concave and continuous on $\X$, strictly increasing in $x \in
[l(a),+\infty)$ and satisfies the growth condition
\begin{equation*}
\hat v(x,a) \leq Kx^\gamma,\quad \forall (x,a) \in \X,
\end{equation*}
for some positive constant $K$, with $\gamma \in (0,1)$ and in
particular the condition on the boundary $\partial \X$ becomes:
\begin{equation*}
\hat v(x,a)= \int_0^{+\infty} e^{-(\rho+\lambda)s}g(x,a)\ud
s=\frac{1}{\rho+\lambda}g(x,a), \quad \forall (x,a) \in \partial
X.
\end{equation*}
We start by proving a first smoothness result for the function
$\hat v$.

\begin{proposition} \label{prosta}
The value function $\hat v$ defined in \eqref{vhatt} is $C^1$ with
respect to $x \in (l(a),+\infty)$, given $a \in A$. Moreover $\ds
\frac{\partial \hat v}{\partial x}(l(a)^+,a)=+\infty$.
\end{proposition}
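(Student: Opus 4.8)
The plan is to prove $C^1$ regularity in $x$ by combining the concavity of $\hat v$ (which guarantees one-sided derivatives and a nonempty superdifferential everywhere) with the viscosity subsolution property and the strict convexity of $\tilde U$. The key observation is that for a \emph{concave} function, the superdifferential $D_x^+\hat v(x,a)$ is always nonempty and equals the interval $[\partial_x^+\hat v, \partial_x^-\hat v]$ bounded by the right- and left-derivatives, while $C^1$ smoothness in $x$ is equivalent to this interval being a singleton at every interior point. So it suffices to rule out a genuine kink, i.e.\ a point $x_0 \in (l(a),+\infty)$ where $\partial_x^+\hat v(x_0,a) < \partial_x^-\hat v(x_0,a)$.

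First I would fix $a \in A$ and work with the restriction $x \mapsto \hat v(x,a)$ on $(l(a),+\infty)$. By Lemma \ref{gconcave}, $g(\cdot,a)$ is concave and strictly increasing in $x$, and by the results recalled from \cite{pt}, $\hat v(\cdot,a)$ is concave, continuous, and strictly increasing; hence its one-sided derivatives exist everywhere and satisfy $\partial_x^+\hat v \le \partial_x^-\hat v$. Then I would exploit the stationary IODE \eqref{iode} in the viscosity sense (Definition \ref{viscosity}). At an interior point $x_0$, every slope $p$ in the closed interval $[\partial_x^+\hat v(x_0,a),\,\partial_x^-\hat v(x_0,a)]$ belongs to the superdifferential $D_{x}^+\hat v(x_0,a)$, so by the subsolution inequality \eqref{sub} we obtain
\begin{equation*}
(\rho+\lambda)\hat v(x_0,a) - \tilde U(p) - g(x_0,a) \ge 0
\end{equation*}
for every such $p$. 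Symmetrically, since $\hat v$ is concave, the extreme slopes $\partial_x^+\hat v$ and $\partial_x^-\hat v$ each lie in the subdifferential $D_x^-\hat v(x_0,a)$ as well (a concave function's subdifferential at a kink consists exactly of the two endpoints), so the supersolution inequality \eqref{super} yields the reverse inequality $(\rho+\lambda)\hat v(x_0,a) - \tilde U(p) - g(x_0,a) \le 0$ for $p \in \{\partial_x^+\hat v, \partial_x^-\hat v\}$. Combining the two gives $\tilde U(\partial_x^+\hat v(x_0,a)) = \tilde U(\partial_x^-\hat v(x_0,a)) = (\rho+\lambda)\hat v(x_0,a) - g(x_0,a)$.

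The decisive step is now to invoke the \emph{strict} convexity of $\tilde U$, guaranteed by Assumption \ref{h2} via the remark citing Theorem 26.6 of \cite{r}. A strictly convex function is injective, so $\tilde U(\partial_x^+\hat v(x_0,a)) = \tilde U(\partial_x^-\hat v(x_0,a))$ forces $\partial_x^+\hat v(x_0,a) = \partial_x^-\hat v(x_0,a)$, collapsing the superdifferential to a point and proving differentiability at $x_0$. Continuity of the derivative then follows from concavity: the derivative of a concave function, once it exists everywhere, is automatically continuous (monotone and without jumps). The main obstacle I anticipate is the careful bookkeeping needed to justify that for a concave function the extreme one-sided slopes genuinely populate both $D_x^+\hat v$ and $D_x^-\hat v$ in the precise test-function sense of Lemma \ref{test}, and in particular that \eqref{sub} may be applied with the full interval of intermediate slopes while \eqref{super} is applied at the endpoints; this requires constructing suitable $C^1$ test functions touching $\hat v$ from above and below.

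Finally, for the boundary behavior $\partial_x\hat v(l(a)^+,a) = +\infty$, I would argue by contradiction using the Inada condition $U'(0^+)=+\infty$, equivalently $\tilde U'(0^+) = -\infty$ (so $\tilde U$ blows up in slope near the origin). The heuristic is that if the derivative stayed bounded as $x \downarrow l(a)$, then the marginal utility of relaxing the binding constraint $\int_t^s c_u\,du \le x - l(a)$ would remain finite, contradicting the infinite marginal utility $U'(0^+) = +\infty$ of consumption near the constrained regime where admissible consumption is forced toward zero. Concretely, I would compare $\hat v(x,a)$ with a near-boundary perturbation: feeding a small extra budget $x - l(a) = \varepsilon$ allows a consumption profile whose utility gain is of order $U(\text{something} \cdot \varepsilon)$, and the Inada condition makes the difference quotient $[\hat v(l(a)+\varepsilon,a) - \hat v(l(a),a)]/\varepsilon$ diverge as $\varepsilon \downarrow 0$, using the explicit boundary value $\hat v(l(a),a) = g(l(a),a)/(\rho+\lambda)$ recorded above.
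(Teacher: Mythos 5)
Your overall strategy --- reduce $C^1$ regularity in $x$ to showing that the superdifferential of the concave function $x \mapsto \hat v(x,a)$ is a singleton, and derive a contradiction at a kink from the viscosity inequalities plus the strict convexity of $\tilde U$ --- is the right one and matches the paper's. But the step you use to pin down the values $\tilde U(\partial_x^{+}\hat v(x_0,a))$ and $\tilde U(\partial_x^{-}\hat v(x_0,a))$ is wrong: you assert that at a kink the two extreme one-sided slopes belong to the subdifferential $D_x^-\hat v(x_0,a)$, so that the supersolution inequality \eqref{super} applies to them. For a concave function the Fr\'echet subdifferential at a kink is \emph{empty}, not the pair of endpoints: test with $u(x)=-|x|$ at $x=0$, where $D^+u(0)=[-1,1]$ while $D^-u(0)=\emptyset$, since $p\in D^-u(0)$ would force $-1-p\,\mathrm{sgn}(z)\ge 0$ for $z$ of both signs. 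So \eqref{super} gives you no information at $x_0$, and you are left with only the one-sided subsolution inequality on the whole interval $D_x^+\hat v(x_0,a)$, which by itself yields no contradiction. The paper closes exactly this gap with the reachable-gradient set: by Proposition II.4.7(a) of \cite{bd}, $D_x^+\hat v(x,a)={\rm co}\,D_x^*\hat v(x,a)$, so the endpoints $p_1,p_2$ are limits of gradients $D_x\hat v(x_n,a)$ at nearby points of differentiability; at such points a continuous viscosity solution satisfies the equation classically (Proposition 1.9(a) of \cite{bd}), and passing to the limit by continuity of $\tilde U$, $g$ and $\hat v$ gives $(\rho+\lambda)\hat v(x_0,a)-\tilde U(p_i)-g(x_0,a)=0$ for $i=1,2$. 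Only then does the viscosity inequality at an interior point $\bar p=\eta p_1+(1-\eta)p_2$ of the superdifferential yield $\tilde U(\bar p)\ge \eta\tilde U(p_1)+(1-\eta)\tilde U(p_2)$, contradicting the strict convexity of $\tilde U$.

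A secondary error: ``a strictly convex function is injective'' is false ($y\mapsto y^2$), so even granting $\tilde U(\partial_x^{+}\hat v)=\tilde U(\partial_x^{-}\hat v)$ your conclusion does not follow as stated. In this problem $\tilde U$ happens to be strictly decreasing, hence injective, but that is a different property and would need to be proved; the paper's argument via strict convexity evaluated at an interior point of $D_x^+\hat v$ is the robust one. Your treatment of the boundary statement $\ds\frac{\partial \hat v}{\partial x}(l(a)^+,a)=+\infty$ via the Inada condition is a workable alternative in outline, but it is only sketched; the paper's route is more direct: send $x\downarrow l(a)$ in the IODE \eqref{iode}, use the boundary value $\hat v(l(a),a)=g(l(a),a)/(\rho+\lambda)$ together with the monotonicity of $\ds\frac{\partial \hat v}{\partial x}$, and conclude $\tilde U\left(\frac{\partial \hat v}{\partial x}(l(a)^+,a)\right)=0$, which forces the derivative to be $+\infty$.
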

\begin{proof}
We fix $a \in A$ and let us show that $\hat v$ is differentiable
on $(l(a),+\infty)$. First we note that the superdifferential
$D_x^+\hat v(x,a)$ is nonempty since $\hat v$ is concave. In view
of Proposition II.4.7 (c) of~\cite{bd}, since $\hat v$ is concave
in $x\in [l(a),+\infty)$, we just have to prove that for a given
$a\in A$, $D_x^+\hat v(x,a)$ is a
singleton for any $x \in (l(a),+\infty)$.\\
Suppose by contradiction that $p_1\neq p_2\in D_x^+\hat v(x,a)$.
Without loss of generality (since $x > l(a)$), we can assume that
$D_x^+\hat v(x,a)=[p_1,p_2]$. Denote by ${\rm co}D_x^*\hat v(x,a)$
the convex hull of the set
$$
D_x^*\hat v(x,a)=\left\{p:p=\lim_{n \to +\infty}D_x\hat v(x_n,a),\
x_n \to x\right\}.
$$
Since by Proposition II.4.7 (a) of~\cite{bd}, $D_x^+\hat
v(x,a)={\rm co}D_x^*\hat v(x,a)$, there exist sequences ${x_n}$,
${y_m}$ in $\R_+$ where $\hat v$ is differentiable and such that
$$
x=\lim_{n\to+\infty}x_n=\lim_{m\to+\infty}y_m,\
p_1=\lim_{n\to+\infty}D_x\hat v(x_n,a),\
p_2=\lim_{m\to+\infty}D_x\hat v(y_m,a).
$$
Since condition d) of Assumption \ref{H} and  Assumption \ref{h2}
hold, by Theorem 5.1 of~\cite{pt}, the pair of value functions
$(v,\hat v)$ is a viscosity solution to \eqref{iode}-\eqref{vv};
then by Proposition 1.9 (a) of~\cite{bd},
\begin{align*}
(\rho+\lambda)\hat v(x_n,a)&-\tilde U\left(D_x \hat
v(x_n,a)\right)-g(x_n,a)=0\\
(\rho+\lambda)\hat v(y_m,a)&-\tilde U\left(D_x \hat
v(y_m,a)\right)-g(y_m,a)=0;
\end{align*}
by continuity this yields
\begin{align}
(\rho+\lambda)\hat v(x,a)&-\tilde U\left(p_1\right)-g(x,a)=0 \label{p1}\\
(\rho+\lambda)\hat v(x,a)&-\tilde
U\left(p_2\right)-g(x,a)=0\label{p2}.
\end{align}
Now let $\bar p =\eta p_1+(1-\eta)p_2$, for $\eta \in (0,1)$.
Since $\bar p \in (p_1,p_2)$ $\subset$ $D_x^+\hat v(x,a) $, we
have by the viscosity supersolution property of $\hat v$~:
$$
(\rho+\lambda)\hat v(x,a)-\tilde U(\bar p)-g(x,a)\leq 0,
$$
so by \eqref{p1}-\eqref{p2}, we get
\begin{equation}\label{uconv}
\tilde U(\bar p)\geq \eta \tilde U(p_1)+(1-\eta)\tilde U(p_2).
\end{equation}
On the other hand, by strict convexity of $\tilde U$, we get
$$
\tilde U(\bar p)=\tilde U(\eta p_1+(1-\eta)p_2)< \eta \tilde
U(p_1)+(1-\eta)\tilde U(p_2),
$$
contradicting \eqref{uconv}. Hence $\hat v$ is differentiable at
any $x \in (l(a),+\infty)$. Notice from \eqref{iode} that for all
$a \in A$, $\displaystyle \frac{\partial \hat v}{\partial x}$ is
continuous in $x$\footnote{This follows also from Proposition
3.3.4 (e), pages 55-56 of~\cite{cs}.}. Now we prove the last
statement. If we get $x=l(a)$ in \eqref{vhatt}, then
$$
\hat v(l(a),a)=\frac{1}{\rho + \lambda}g(l(a),a).
$$
Now we send $x \to l(a)$ in \eqref{iode} (this is possible since
$\hat v$ and $g$ are continuous in $x \in [l(a),+\infty)$ and
since $\displaystyle \frac{\partial \hat v}{\partial x}$ is
monotone in $x$) and we obtain
$$
(\rho + \lambda)\hat v\left(l(a)^+,a\right)-\tilde
U\left(\frac{\partial \hat v \left(l(a)^+,a\right)}{\partial
x}\right)-g\left(l(a)^+,a\right)=0.
$$
Comparing the last formulas, we obtain
\begin{equation} \label{vderivinfty}
\tilde U\left(\frac{\partial \hat v
\left(l(a)^+,a\right)}{\partial x}\right)=0 \ \Longleftrightarrow
\ \frac{\partial \hat v \left(l(a)^+,a\right)}{\partial
x}=+\infty.
\end{equation}
\end{proof}

\noindent Before the final result we provide the following lemma.

\begin{lemma}
Let $v$ and $\hat v$ be the value functions given in \eqref{vdef}
and \eqref{vhatt} respectively. Then, given any $x> 0$ and calling
$a_x$ a maximum point of the problem \eqref{vv}, we have
\begin{equation}\label{eq:inclusionD+}
D^+v(x) \subseteq D_x^+\hat v(x,a_x).
\end{equation}
\end{lemma}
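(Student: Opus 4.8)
The plan is to use the fact that $a_x$ realises the supremum \eqref{vv}, so that the one-variable map $y \mapsto \hat v(y, a_x)$ touches the value function $v$ from below at $x$, and then to push the superdifferential property of $v$ across this contact point by comparing difference quotients directly.

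First I would record two elementary facts. Since $a_x$ is a maximum point of \eqref{vv},
\begin{equation*}
v(x) = \hat v(x, a_x).
\end{equation*}
Next, the feasibility constraint $a \in [-x/\bar z, x/\underline z]$ is precisely $x \geq l(a)$, so $a_x$ is admissible in the maximisation defining $v(y)$ exactly when $y \geq l(a_x)$ --- which is also the condition for $(y, a_x) \in \X$, i.e. for $\hat v(y, a_x)$ to be defined. Hence, on the whole domain of the map $y \mapsto \hat v(y, a_x)$,
\begin{equation*}
v(y) \geq \hat v(y, a_x),
\end{equation*}
with equality at $y = x$.

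The conclusion then follows by a direct estimate: for $p \in D^+v(x)$ and $y$ in that domain, the two displays above give
\begin{equation*}
\hat v(y, a_x) - \hat v(x, a_x) - p(y - x) \leq v(y) - v(x) - p(y - x),
\end{equation*}
and dividing by $|y - x|$ and taking $\limsup$ as $y \to x$ yields $\limsup \leq 0$ by the definition of $D^+v(x)$. This is exactly $p \in D_x^+\hat v(x, a_x)$, which proves \eqref{eq:inclusionD+}.

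The only point needing care --- the part I would treat explicitly --- is the left boundary case $x = l(a_x)$: there $\hat v(\cdot, a_x)$ is defined only for $y \geq x$, and $D_x^+\hat v(x,a_x)$ becomes the one-sided object obtained by restricting the $\limsup$ to the domain, as in the definition of the superdifferential $D^+$. Since the comparison inequality holds for every admissible $y$, the argument is unaffected provided the constraint $y \geq l(a_x)$ is carried throughout. Existence of $a_x$ is not an issue, the supremum in \eqref{vv} being taken over the compact interval $[-x/\bar z, x/\underline z]$ on which $\hat v$ is continuous.
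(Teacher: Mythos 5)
Your proof is correct and follows essentially the same route as the paper's: both rest on the observation that $a_x$ remains admissible for the maximization defining $v(y)$ whenever $y \geq l(a_x)$, so that $\hat v(\cdot,a_x) \leq v$ on its domain with equality at $x$, and then transfer the superdifferential inequality across this contact point. The only cosmetic difference is that the paper uses the global inequality characterization of $D^+$ available for concave functions, whereas you argue directly from the $\limsup$ definition; both are valid and the boundary case $x = l(a_x)$ is handled the same way.
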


\begin{proof}
Let $x>0$. Since $v$ is concave we have
$$
D^+v(x)=\left\{p:v(x+h)-v(x)\leq ph, \; \forall h \mbox{ s.t.
}x+h\ge 0 \right\},
$$
Since $v$ is concave we have $D^+v(x) \neq \emptyset$. Let $p \in
D^+v(x)$. We have to prove that
\begin{equation}\label{eq:D+inclusion1}
    \hat v(x+h, a_x)-\hat v(x,a_x) \leq ph,
\end{equation}
for every $h$ such that $x+h\ge l(a_x)$. We first observe that
\begin{equation}\label{eq:D+inclusion2}
 \hat v(x+h, a_{x+h})-\hat v(x,a_x)=v(x+h)-v(x) \leq ph,
\end{equation}
for every $h$ such that $x+h\ge 0$ (here $a_x$ and $a_{x+h}$ are
optimal for $v(x)$ and $v(x+h)$ respectively).

\noindent Now call $I(x)= \left[-\frac{x}{\bar
z},\frac{x}{\underline{z}}\right]$ and observe that, for $0<x_1
<x_2$ we have ${0}\subset I(x_1)\subset I(x_2)$. So if $h\ge 0$ we
have that $a_x \in I(x+h)$, $\hat v(x+h, a_x)$ is well defined and
\begin{equation}\label{eq:D+inclusion3}
\hat v(x+h, a_x)\le \hat v(x+h,a_{x+h})
\end{equation}
which, together with \eqref{eq:D+inclusion2}, implies
\eqref{eq:D+inclusion1} for $h\ge 0$. Now if $x=l(a_x)$ there is
nothing more to prove. If $x>l(a_x)$ take $h<0$ such that $x+h\ge
l(a_x)$. For such $h$ we have $a_x \in I(x+h)$ so we still have
\eqref{eq:D+inclusion3} and so the claim as for the case $h>0$.
Hence $p \in D_x^+\hat v(x,a_x)$.
\end{proof}

\noindent Now we are ready to prove the final regularity result
for the stationary case.

\begin{theorem}
Let $v$, $\hat v$ be the value functions given in \eqref{vdef} and
\eqref{vhat} respectively. Then:
\begin{itemize}
\item $v\in
C^1(0,+\infty)$ and any maximum point in \eqref{vv} is internal
for every $x>0$; moreover $v'(0^+)=+\infty$;
\item
for every $a \in A$ we have $\hat v(\cdot,a) \in C^2
(l(a),+\infty)$. Finally $\ds \frac{\partial \hat v}{\partial
x}(l(a)^+,a)=+\infty$.
\end{itemize}
\end{theorem}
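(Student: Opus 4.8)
The plan is to deduce the $C^1$-regularity of $v$ from the already-established $C^1$-regularity of $\hat v$ in $x$ (Proposition \ref{prosta}) through the inclusion \eqref{eq:inclusionD+}, the only gap being to guarantee that a maximizer $a_x$ of \eqref{vv} keeps us strictly inside $\X$, i.e. $l(a_x)<x$. Indeed, once $l(a_x)<x$ is known, Proposition \ref{prosta} tells us that $\hat v(\cdot,a_x)$ is differentiable at $x$, so $D_x^+\hat v(x,a_x)$ is the singleton $\{\partial_x\hat v(x,a_x)\}$; combined with \eqref{eq:inclusionD+} and the nonemptiness of $D^+v(x)$ (concavity of $v$), this forces $D^+v(x)$ to be a singleton. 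By the characterization of differentiability of concave functions (Proposition II.4.7(c) of \cite{bd}), $v$ is then differentiable at every $x>0$, and since the derivative of a concave function is monotone with the Darboux property it is automatically continuous, giving $v\in C^1(0,+\infty)$.

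The crux is therefore to show that no maximizer $a_x$ of \eqref{vv} can sit on the part of the boundary of the $a$-interval lying on $\partial\X$, namely $a_x=x/\underline z$ (and, when $\bar z<+\infty$, also $a_x=-x/\bar z$); both force $l(a_x)=x$. At such a point the consumption budget $x-l(a_x)$ vanishes, so $\C_{a_x}(x)=\{0\}$ and $\hat v(x,a_x)=\frac{1}{\rho+\lambda}g(x,a_x)$. I would perturb $a$ slightly inward, to $a=a_x-\delta$ with $\delta>0$ small, which opens a consumption budget $x-l(a_x-\delta)=\delta\underline z>0$, and test it with the constant policy $c_s=\eta\mathbf 1_{[0,T]}(s)$, $\eta=\delta\underline z/T$. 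Using that $g$ is increasing in $x$ to bound the running $g$-term below by its boundary value, one gets
\begin{equation*}
\hat v(x,a_x-\delta)-\hat v(x,a_x)\;\geq\;U(\eta)\,\frac{1-e^{-(\rho+\lambda)T}}{\rho+\lambda}\;-\;\frac{1}{\rho+\lambda}\big[\phi(a_x)-\phi(a_x-\delta)\big],
\end{equation*}
where $\phi(a):=g(l(a),a)$ is the boundary value of $g$. The Inada condition $U'(0^+)=+\infty$ makes the first term of order $U(\eta)\gg\eta\sim\delta$, while the growth bound \eqref{vgrowth} together with $v'(y)\le v(y)/y\le Ky^{\gamma-1}$ shows that $\phi$ is locally Lipschitz (the apparent singularity of $v'$ near the origin is integrable against $(\underline z+z)^\gamma$), so the second term is only $O(\delta)$. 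Hence the right-hand side is strictly positive for $\delta$ small, contradicting the optimality of $a_x$. This is the step I expect to be the main obstacle, precisely because one must control the $g$-term near the boundary where $\partial_x g$ may blow up; the remedy is to compare with boundary values and exploit $v(y)\le Ky^\gamma$.

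For $v'(0^+)=+\infty$ I would simply use $v(x)\ge\hat v(x,0)$ from \eqref{vv} together with $\hat v(0,0)=0$, so that $v(x)/x\ge\hat v(x,0)/x\to\partial_x\hat v(0^+,0)=+\infty$ by Proposition \ref{prosta} (recall $l(0)=0$); since $v$ is concave with $v(0)=0$, $v'(0^+)=\lim_{x\to0^+}v(x)/x=+\infty$. Finally, for $\hat v(\cdot,a)\in C^2(l(a),+\infty)$ I would run a bootstrap on the IODE \eqref{iode}. Knowing now $v\in C^1$, the function $g(x,a)=\lambda\int v(x+az)p(\ud z)$ is $C^1$ in $x$ on $(l(a),+\infty)$ — differentiation under the integral is justified because $x+az\ge x-l(a)>0$ stays away from the singularity of $v'$ at the origin — hence the right-hand side $(\rho+\lambda)\hat v(x,a)-g(x,a)$ of \eqref{iode} is $C^1$ in $x$. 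Since $\tilde U$ is $C^1$ and strictly convex with $\tilde U'(y)=-(U')^{-1}(y)<0$ (by the Inada conditions of Assumption \ref{h2}), it is a strictly decreasing $C^1$-diffeomorphism onto its range, so $\partial_x\hat v(x,a)=\tilde U^{-1}\big((\rho+\lambda)\hat v(x,a)-g(x,a)\big)$ is $C^1$ in $x$; therefore $\hat v(\cdot,a)\in C^2(l(a),+\infty)$. The last identity $\partial_x\hat v(l(a)^+,a)=+\infty$ is exactly the second assertion of Proposition \ref{prosta}.
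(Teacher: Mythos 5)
Your proof is correct, and its overall skeleton coincides with the paper's: you reduce $C^1$-regularity of $v$ to the inclusion \eqref{eq:inclusionD+} plus Proposition \ref{prosta}, you get $v'(0^+)=+\infty$ from $v\ge \hat v(\cdot,0)$ and \eqref{vderivinfty} with $a=0$, and you obtain $C^2$-regularity by solving \eqref{iode} for $\partial_x\hat v=\tilde U^{-1}\left((\rho+\lambda)\hat v-g\right)$ after checking that $g(\cdot,a)$ is $C^1$ on $(l(a),+\infty)$ (the paper only says this ``follows from \eqref{iode}''; your bootstrap is the intended argument, spelled out).

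Where you genuinely diverge is the internality of the maximizer $a_x$, which you correctly identify as the crux. The paper dispatches it in two lines using machinery already in place: $D^+v(x)\neq\emptyset$ by concavity, hence $D_x^+\hat v(x,a_x)\neq\emptyset$ by \eqref{eq:inclusionD+} (whose proof covers the boundary case $x=l(a_x)$), while \eqref{vderivinfty} forces $D_x^+\hat v(l(a),a)=\emptyset$ — a concave function of $x$ with infinite right derivative at the left endpoint of its domain has empty superdifferential there — so $x>l(a_x)$. You instead run a direct variational argument: perturb $a_x$ inward by $\delta$, consume the freed budget $\delta\underline z$ at rate $\eta=\delta\underline z/T$ over $[0,T]$, bound the running $g$-term below by its boundary value $\phi(a_x-\delta)$ using monotonicity of $g$ in $x$, and show the utility gain $U(\eta)\gg\delta$ (Inada) beats the loss $\phi(a_x)-\phi(a_x-\delta)=O(\delta)$, which you control via $v'(y)\le v(y)/y\le Ky^{\gamma-1}$ and the integrability of $(\underline z+z)^\gamma$ against $p(\ud z)$ granted by Assumption \ref{H}(c). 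This is sound (the only boundary candidates are $a_x=x/\underline z>0$ and, if $\bar z<+\infty$, $a_x=-x/\bar z<0$, so the factor $(a_x-\delta)^{\gamma-1}$ stays bounded), but it re-proves from scratch, with extra estimates on the boundary function $\phi$, a fact that the combination of the inclusion lemma with \eqref{vderivinfty} delivers for free. The trade-off: your argument is self-contained and quantitative, and in particular does not need the inclusion \eqref{eq:inclusionD+} to hold at boundary points; the paper's is shorter and is precisely why \eqref{vderivinfty} was established in Proposition \ref{prosta}.
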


\begin{proof}
Since $v$ is concave then $D^+v(x)$ is nonempty at every $x>0$.
This implies, by \eqref{eq:inclusionD+}, that also $D_x^+\hat
v(x,a_x)$ is nonempty for every $x>0$. Since, by
\eqref{vderivinfty}, $\ds \frac{\partial\hat v}{\partial x}
(l(a)^+,a)=+\infty$ (which implies $D^+_x \hat
v(l(a),a)=\emptyset$) we get that it must be $x>l(a_x)$ and so any
maximum point in \eqref{vv} is internal. Moreover since, given $a
\in A$ we have that $\hat v$ is $C^1$ in $x\in (l(a),+\infty)$
then the superdifferential is a single point and so from
\eqref{eq:inclusionD+} also $D^+v(x)$ ia single point, which
implies the wanted regularity of $v$. The statement
$v'(0^+)=+\infty$ follows simply observing that $v(x) \ge \hat
v(x,0)$, $v(0)=\hat v(0,0)=0$, and from (\ref{vderivinfty}) for
$a$ $=$ $0$. Finally $\hat v(\cdot,a) \in C^2 (l(a),+\infty)$
follows from \eqref{iode} and $\ds \frac{\partial \hat v}{\partial
x}(l(a)^+,a)=+\infty$ from Proposition \ref{prosta}.
\end{proof}

\subsection{The nonstationary case} \label{regevol}

In this subsection we study the regularity of the value function
$\hat v$ in the general case where the distribution $p(t,\ud z)$
may depend on time. With respect to the stationary case,  the
value function $\hat v$ is in general not concave in both
time-space variables, and we cannot apply directly arguments as in
Proposition \ref{prosta}.  Actually, we shall prove the regularity
of the value function $\hat v$ as well as in the stationary case,
by means of (locally) semiconcave functions.

\noindent First, we recall the concept of semiconcavity. Let $S$
be a subset of $\Omega$.
\begin{definition}
We say that a function $u:S\to \R$ is {\em semiconcave} if there
exists a nondecreasing upper semicontinuous function
$\omega:\R_+\to\R_+$ such that $\lim_{\rho \to 0^+}\omega(\rho)=0$
and
\begin{equation} \label{semiconcavity}
\eta u(x_1)+(1-\eta)u(x_2)-u(\eta x_1+(1-\eta)x_2)\leq
\eta(1-\eta)|x_1-x_2|\omega(|x_1-x_2|),
\end{equation}
for any pair $x_1,x_2$ such that the segment $[x_1,x_2]$ is
contained in $S$ and for $\eta \in [0,1]$. In particular we call
{\em locally semiconcave} a function which is semiconcave on every
compact subset of its domain of definition.
\end{definition}

\noindent Clearly, a concave function is also semiconcave. An
important example of semiconcave functions is given by the smooth
ones.
\begin{proposition} \label{cione}
Let $u \in C^1(A)$, with $A$ open. Then  both $u$ and $-u$ are
locally semiconcave in $A$ with modulus equal to the modulus of
continuity of $Du$.
\end{proposition}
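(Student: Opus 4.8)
The plan is to reduce the semiconcavity inequality \eqref{semiconcavity} to a statement about the increments of $Du$ via the fundamental theorem of calculus, and then to absorb those increments into the modulus of continuity of $Du$. Fix a compact set $K\subset A$ and let
$$
\omega_K(\rho)=\sup\bigl\{|Du(x)-Du(y)|:x,y\in K,\ |x-y|\le\rho\bigr\}
$$
be the modulus of continuity of $Du$ on $K$. Since $Du$ is continuous and $K$ is compact, $Du$ is uniformly continuous on $K$, so $\omega_K$ is nondecreasing and $\omega_K(\rho)\to 0$ as $\rho\to 0^+$; a short compactness argument (extracting a convergent subsequence of near-maximizers as $\rho$ decreases) shows that $\omega_K$ is moreover right-continuous, hence upper semicontinuous, so it qualifies as an admissible modulus in the sense of the definition of semiconcavity.

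Next I would fix $x_1,x_2$ with $[x_1,x_2]\subset K$ and $\eta\in[0,1]$, set $x_\eta=\eta x_1+(1-\eta)x_2$, and write both $u(x_1)-u(x_\eta)$ and $u(x_2)-u(x_\eta)$ as integrals of $\langle Du,\cdot\rangle$ along the respective segments (all evaluation points lie on $[x_1,x_2]\subset K$, so $\omega_K$ will apply). Using $x_1-x_\eta=(1-\eta)(x_1-x_2)$ and $x_2-x_\eta=-\eta(x_1-x_2)$, the combination $\eta u(x_1)+(1-\eta)u(x_2)-u(x_\eta)$ collapses, after factoring out $\eta(1-\eta)$, into
$$
\eta(1-\eta)\int_0^1\bigl\langle Du\bigl(x_\eta+s(1-\eta)(x_1-x_2)\bigr)-Du\bigl(x_\eta-s\eta(x_1-x_2)\bigr),\,x_1-x_2\bigr\rangle\,\ud s.
$$
The key algebraic observation is that the two arguments of $Du$ differ exactly by $s(x_1-x_2)$, whose norm is at most $|x_1-x_2|$.

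From here the estimate is immediate: by Cauchy--Schwarz and the definition of $\omega_K$ (using that it is nondecreasing), the integrand is bounded in absolute value by $\omega_K(|x_1-x_2|)\,|x_1-x_2|$, so
$$
\bigl|\eta u(x_1)+(1-\eta)u(x_2)-u(x_\eta)\bigr|\le \eta(1-\eta)\,|x_1-x_2|\,\omega_K(|x_1-x_2|).
$$
Because the bound is on the absolute value, dropping the modulus signs yields \eqref{semiconcavity} for $u$, while the opposite sign yields the very same inequality for $-u$, whose gradient $-Du$ plainly has the same modulus of continuity. Thus both $u$ and $-u$ are semiconcave on $K$ with modulus $\omega_K$, and letting $K$ range over the compact subsets of $A$ gives the asserted local semiconcavity of both functions.

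The only genuinely delicate point is the verification that $\omega_K$ meets the upper-semicontinuity requirement built into the definition of the modulus, which I would dispatch by the right-continuity argument sketched in the first paragraph; everything else is the routine bookkeeping of the first-order Taylor expansion and the elementary identity $x_1-x_\eta=(1-\eta)(x_1-x_2)$, $x_2-x_\eta=-\eta(x_1-x_2)$.
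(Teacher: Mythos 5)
Your proof is correct. The paper itself gives no argument here --- it simply cites Proposition 2.1.2 of Cannarsa--Sinestrari --- and your integral-representation computation (writing $\eta u(x_1)+(1-\eta)u(x_2)-u(x_\eta)$ as $\eta(1-\eta)$ times an integral of increments of $Du$ at points displaced by $s(x_1-x_2)$, then invoking the modulus of continuity on a compact set) is essentially the standard proof found in that reference, including the routine check that $\omega_K$ is nondecreasing, vanishes at $0^+$, and is upper semicontinuous.
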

\begin{proof}
See Proposition 2.1.2 of~\cite{cs} for the proof.
\end{proof}
\begin{remark}
We should stress that the superdifferential of a locally
semiconcave function is nonempty, since all the properties of
superdifferential hold even locally.
\end{remark}
\noindent We introduce an additional assumption on the measure
$p(t,\ud z)$:
\begin{ass}\label{h5}
for every $a \in \A-\{0\}$ , the map
$$
(t,x) \longmapsto \lambda \int w(x+az)p(t,\ud z)
$$
is locally semiconcave for $(t,x) \in (0,+\infty)\times
(l(a),+\infty)$, and for all measurable continuous functions $w$
on $\R$ with linear growth condition.
\end{ass}
\begin{remark}\label{rm:hpH5}
Since it is not trivial to check the validity of Assumption
\ref{h5}, we give some conditions the guarantee it. First of all,
we exclude the case $a=0$ from Assumption \ref{h5} since in this
case we have, for every $(t,x)\in \R_+\times [l(a),+\infty)$
$$
g(t,x)=\lambda v(x)
$$
so we are in the stationary case and we already know from the
previous section that $\hat v$ is $C^1$. Now, when $a\not=0$, we
set the new variable $y=x+az=h_x(z)$ and call $\mu(t,x;\ud y)$ the
measure $(h_x\circ p)(t, \ud z)$. The measure $\mu$ has the
following support:
\begin{enumerate}
\item  $(x-a\underline
z,+\infty)$, if $\bar z=+\infty$, and $a>0$;\label{infinitesup}
\item $(x-a\underline z,x+a \bar z)$,
if $\bar z<+\infty$ and $a > 0$ \label{finitesup}
\item $(x+a\bar z,x-a\underline z)$,
if $\bar z<+\infty$ and $a<0$;
\item $\{x-a\underline z,\ldots,x+a\bar z\}$,
if the support of $p$ is finite and $a > 0$ (in this case $\bar
z<+\infty)$;
\item $\{x+a\bar z,\ldots,x-a\underline z\}$,
if the support of $p$ is finite and $a<0$ (in this case $\bar
z<+\infty)$.
\end{enumerate}
Now Assumption \ref{h5} can be written as: the function $g_w$
given by
$$
(t,x) \longmapsto \lambda \int w(y)\mu(t,x;\ud y)
$$
is locally semiconcave for $(t,x) \in (0,+\infty) \times
(l(a),+\infty)$, and for all measurable continuous functions $w$
on $\R$ with linear growth condition.\\
In this form, it is easier to find conditions that guarantee the
validity of this assumption in terms of the regularity of $\mu$.
For example, if we assume the measure $p(t,\ud z)$ has a density
$f(t,z)$, the integral
$$
\int w(x+az)f(t,z)\ud z
$$
by the above change of variable is rewritten as:
$$
\frac{1}{a}\int w(y)f\left(t,\frac{y-x}{a}\right)\ud y.
$$
Now, by Proposition \ref{cione}, the local semiconcavity of $g_w$
in the interior $(0,+\infty) \times (l(a),+\infty)$ of its domain follows from its continuous differentiability. \\
Let us give a condition that guarantees that $g_w$ is $C^1$ in the
case \ref{infinitesup}. If the density $f$ is continuously
differentiable and suitable integrability conditions are
satisfied, then we have: for every $a
> 0$,
\begin{align*}
\frac{\partial g_w(t,x)}{\partial
t}&=\frac{1}{a}\int_{x-a\underline z}^{+\infty}w(y)\frac{\partial
f}{\partial
t}\left(t,\frac{y-x}{a}\right)\ud y, \\
\frac{\partial g_w(t,x)}{\partial
x}&=-\frac{1}{a^2}\int_{x-a\underline
z}^{+\infty}w(y)\frac{\partial f}{\partial
x}\left(t,\frac{y-x}{a}\right)\ud y-\frac{1}{a}w(x-a\bar
z)f(t,\underline z),
\end{align*}
for $(t,x) \in (0,+\infty) \times (l(a),+\infty)$. From the above
expressions, it is easy to check that we can derive the continuous
differentiability from the following assumptions:
\begin{itemize}
\item the density $f$ is continuous and
for each $a \in \A$, the generalized integral
$$
\int_{x-a\underline z}^{+\infty} (1+|y|)
f\left(t,\frac{y-x}{a}\right)\ud y
$$
converges for every $(t,x) \in (0,+\infty) \times (l(a),+\infty)$;
\item the partial derivatives $\ds \frac{\partial f}{\partial t}$, $\ds \frac{\partial f}{\partial
x}$ are continuous and satisfy respectively the following
integrability conditions: for each $a \in \A$,
$$
\int_{x-a\underline z}^{+\infty} (1+|y|) \frac{\partial
f}{\partial t}\left(t,\frac{y-x}{a}\right)\ud y
$$
converges uniformly with respect to $t$ $\in$ $\T$, for any
compact set  $\T$ of  $(0,+\infty)$, for every $x \in
(l(a),+\infty)$, and
$$
\int_{x-a\underline z}^{+\infty} (1+|y|) \frac{\partial
f}{\partial x}\left(t,\frac{y-x}{a}\right)\ud y
$$
converges uniformly with respect to $x$ $\in$ $K$, for any compact
set $K$ of  $(l(a),+\infty)$, for every $t \in (0,+\infty)$.
\end{itemize}
Let us check the above assumptions in the Black-Scholes model,
introduced in Example \ref{black}. We recall that the dynamics of
$S$ is given by $\ud S_t=bS_t\ud t+\sigma S_t\ud W_t$, with $b
\geq 0$, $\sigma
>0$, so that $p(t,\ud z)$ is the distribution of
$$
Z(t)=\exp\left[\left(b-\frac{\sigma^2}{2}\right)t+\sigma
W_t\right]-1,
$$
with support $(-1,+\infty)$. Then, since $S$ has a lognormal
distribution, the density $f_Z$ is given by:
$$
f_Z(t,z)=\frac{1}{\sigma\sqrt{2\pi
t}(z+1)}\exp\left[-\frac{\left(\ln(z+1)-\left(b-\frac{\sigma^2}{2}\right)t\right)^2}{2\sigma^2
t}\right].
$$
We compute the partial derivatives $\ds \frac{\partial
f_Z}{\partial t},\ \frac{\partial f_Z}{\partial z}$ and we get:
\begin{align*}
\frac{\partial f_Z(t,z)}{\partial t}&=\frac{1}{2\sigma\sqrt{2\pi
t} (z+1)}e^{-\frac{(\ln(z+1)-(b-\frac{\sigma^2}{2})t)^2}{2\sigma^2
t}}\left[-\frac{1}{t}+\frac{1}{\sigma^2t}\ln
^2(z+1)-\frac{b}{\sigma^2}+\frac{1}{2}\right],\\
\frac{\partial f_Z(t,z)}{\partial z}&=\frac{1}{\sigma \sqrt{2\pi
t}(z+1)^2}e^{-\frac{(\ln(z+1)-(b-\frac{\sigma^2}{2})t)^2}{2\sigma^2
t}}\left[-\frac{1}{\sigma^2 t}\ln
(z+1)+\frac{b}{\sigma^2t}-\frac{3}{2}\right].
\end{align*}
Hence it is not difficult to check that the assumptions described
above are satisfied.
\end{remark}

\noindent We start by proving a smoothness property for $\hat v$.

\begin{proposition} \label{smoothness}
Suppose that Assumption \ref{h5} is satisfied. Then the value
function $\hat v$ defined in \eqref{vhat} belongs to
$C^1\left([0,+\infty) \times (l(a),+\infty)\right)$, given $a \in
\A$. Moreover
\begin{equation}\label{xder}
\ds \frac{\partial \hat v(t,l(a)^+,a)}{\partial x}=+\infty,\ {\rm
for\ every}\ t \ge 0.
\end{equation}
\end{proposition}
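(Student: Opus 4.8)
The plan is to mirror the structure of the stationary case (Proposition \ref{prosta}), but, since $\hat v$ is no longer concave in the joint variable $(t,x)$, to replace concavity by local semiconcavity. Concretely, I would proceed in three steps: (i) prove that $(t,x)\mapsto\hat v(t,x,a)$ is locally semiconcave on $(0,+\infty)\times(l(a),+\infty)$; (ii) deduce that its superdifferential is everywhere nonempty and use the viscosity property together with the strict convexity of $\tilde U$ to force this superdifferential to be a singleton, hence $\hat v$ to be differentiable; (iii) upgrade differentiability to $C^1$ and treat the boundary behaviour \eqref{xder} exactly as in Proposition \ref{prosta}.

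For step (i) I would start from the representation \eqref{vhat1}, shifting time by $\tau=s-t$ so that
\begin{equation*}
\hat v(t,x,a)=\sup_{c}\int_0^{+\infty}e^{-(\rho+\lambda)\tau}\Big[U(c_\tau)+g\big(t+\tau,\,x-\textstyle\int_0^\tau c_u\,\ud u,\,a\big)\Big]\,\ud\tau ,
\end{equation*}
the supremum being over nonnegative $c$ with $x-\int_0^\tau c_u\,\ud u\ge l(a)$ for all $\tau$. For a \emph{fixed} control $c$, the map $(t,x)\mapsto(t+\tau,\,x-\int_0^\tau c_u\,\ud u)$ is a pure translation of $(t,x)$, so by Assumption \ref{h5} (applied with $w=v$, which has the required growth by \eqref{vgrowth}) the integrand $(t,x)\mapsto g(t+\tau,x-\int_0^\tau c_u\,\ud u,a)$ is locally semiconcave with a modulus \emph{independent of} $c$, while $U(c_\tau)$ does not depend on $(t,x)$. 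Integrating against the finite measure $e^{-(\rho+\lambda)\tau}\,\ud\tau$ preserves the modulus, and a supremum of a family of functions that are semiconcave with one common modulus is again semiconcave. This would yield the local semiconcavity of $\hat v(\cdot,\cdot,a)$.

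Step (ii) then runs as follows. Semiconcavity gives $D^+_{t,x}\hat v(t,x,a)=\mathrm{co}\,D^*_{t,x}\hat v(t,x,a)\neq\emptyset$ (Proposition II.4.7 of \cite{bd}). Any reachable gradient $(q,p)\in D^*_{t,x}\hat v(t,x,a)$ is a limit of classical gradients at differentiability points, where \eqref{ipde} holds classically (Proposition 1.9 of \cite{bd}); passing to the limit, every such $(q,p)$ satisfies $(\rho+\lambda)\hat v-q-\tilde U(p)-g=0$, i.e.\ lies on the graph $q=(\rho+\lambda)\hat v(t,x,a)-g(t,x,a)-\tilde U(p)$. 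Since $-\tilde U$ is strictly concave, this graph is strictly concave in $(p,q)$, so if $D^*_{t,x}\hat v$ contained two distinct points their $p$-coordinates would differ and every strict convex combination $(\bar q,\bar p)\in D^+_{t,x}\hat v$ would lie \emph{strictly below} the graph, contradicting the viscosity inequality satisfied at $(\bar q,\bar p)$ exactly as in \eqref{uconv} of Proposition \ref{prosta}. Hence $D^+_{t,x}\hat v$ is a singleton and $\hat v$ is differentiable; a semiconcave function differentiable at every point is $C^1$, which gives $\hat v(\cdot,\cdot,a)\in C^1((0,+\infty)\times(l(a),+\infty))$, extended up to $t=0$ by continuity of the gradient and the equation. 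Finally, \eqref{xder} follows by sending $x\to l(a)^+$ in \eqref{ipde}: on $\partial\X$ the boundary representation \eqref{generalboundary} gives $(\rho+\lambda)\hat v-\partial_t\hat v-g=0$, whence $\tilde U(\partial_x\hat v(t,l(a)^+,a))=0$, and $\tilde U(y)=0$ iff $y=+\infty$ by the Inada condition $U'(0^+)=+\infty$, just as in \eqref{vderivinfty}.

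The main obstacle is the constraint in step (i): the admissible set $\C_a(t,x)$ grows with $x$, so the ``supremum of uniformly semiconcave functions'' argument must be carried out with competitor controls that are admissible at the neighbouring points \emph{and} whose induced state trajectories differ from the base one by $O(|h|)$ uniformly in $\tau$. Taking the midpoint of the two endpoint--optimal controls fails, because the resulting trajectory gap $\int_0^\tau(c^1-c^2)\,\ud u$ is not controlled by the displacement; instead I would fix a (near-)optimal control at the base point and perturb it \emph{affinely} in the displacement $h=(h_t,h_x)$, in the spirit of \cite{cs1} (see also \cite[p.~80]{bd}). The delicate point is admissibility near the free boundary: where the base trajectory approaches $l(a)$ an additive shift may violate $Y\ge l(a)$, so the perturbation must be damped multiplicatively --- consistent with the scaling $c\in\C_a(t,x)\Leftrightarrow\beta c\in\C_{\beta a}(t,\beta x)$ --- or its contribution absorbed using the exponential discount $e^{-(\rho+\lambda)\tau}$. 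This is the step where the special structure of the problem (linear dynamics $\dot Y=-c$, concavity of $U$ and of $g$ in $x$, and discounting) is used decisively.
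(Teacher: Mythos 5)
Your steps (ii) and (iii) --- nonempty superdifferential from semiconcavity, the reachable-gradient argument combined with the strict convexity of $\tilde U$ to force $D^+_{t,x}\hat v(t,x,a)$ to be a singleton, the upgrade from everywhere-differentiability to $C^1$ via Proposition 3.3.4(e) of \cite{cs}, and the boundary identity $\tilde U\bigl(\partial\hat v(t,l(a)^+,a)/\partial x\bigr)=0$ --- coincide with the paper's proof. The gap is in step (i). The principle you invoke, that a supremum of functions that are semiconcave with one common modulus is again semiconcave, is false: semiconcavity is preserved under \emph{infima}, not suprema (the function $|x|=\max(x,-x)$ is a supremum of two affine functions, each semiconcave with modulus zero, yet has empty superdifferential at the origin). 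Your proposed repair --- fixing a near-optimal control at the base point and perturbing it affinely in $h$, in the spirit of \cite{cs1} --- has the wrong logical orientation for a \emph{maximization} problem: inserting a single perturbed control into the supremum only yields \emph{lower} bounds $\hat v(z_0+h)\ge J(z_0+h;\tilde c)$, i.e.\ subdifferential/semiconvexity-type estimates. To bound $\eta\hat v(z_1)+(1-\eta)\hat v(z_2)$ from above you are forced to use two different $\epsilon$-optimal controls $c^1$ at $z_1$ and $c^2$ at $z_2$, and then to produce a single competitor at $z_\eta$ built from both.

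That is what the paper does: it takes $c_\eta=\eta c^1+(1-\eta)c^2\in\C_a(x_\eta)$, so that $Y_r^{x_\eta}(c_\eta)=\eta Y_r^{x_1}(c^1)+(1-\eta)Y_r^{x_2}(c^2)$. The trajectory gap $\int_0^r(c^1_u-c^2_u)\,\ud u$, which you rightly observe is not controlled by $|x_1-x_2|$, is rendered harmless not by making it small but by the \emph{concavity of $g(t,\cdot,a)$ in $x$} (Lemma \ref{gconcave}), exactly as the concavity of $U$ disposes of the term $\eta U(c^1)+(1-\eta)U(c^2)-U(c_\eta)$: in the $x$-direction one needs only a sign, not a modulus. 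The only genuinely non-concave direction is $t$, and there the ``trajectories'' $t_i+r$ are rigid translates whose gap is exactly $|t_1-t_2|$; this is the sole place where the semiconcavity of Assumption \ref{h5} is needed. Without separating the concave $x$-direction from the $t$-direction, neither your uniform-modulus supremum argument nor a direct application of the joint semiconcavity of $g$ along the (possibly far apart) trajectories produces the required $o(|z_1-z_2|)$ excess, so step (i) as you have written it does not go through, and with it the whole proof collapses, since everything downstream feeds on the nonemptiness of the superdifferential.
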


\begin{proof}
We fix $a \in \A$ and let us show that $\hat v$ is differentiable
at any $(t,x) \in (0,+\infty) \times (l(a),+\infty)$. When $a=0$,
as we noted at the beginning of Remark \ref{rm:hpH5}, $\hat v$ is
independent of $t$ and $C^1$ in $x$ thanks to the results of
Section 5. Take then $a \ne 0$. First, we notice from Assumption
\ref{h5} that $g$ is (locally) semiconcave in  $(t,x)\in
(0,+\infty) \times (l(a),+\infty)$. Together with the  concavity
of $U$, this shows that  $\hat v$ is (locally) semiconcave in
$(t,x) \in (0,+\infty) \times (l(a),+\infty)$. Indeed, if we set
$r=s-t$ we can rewrite \eqref{vhat1} as follows:
\begin{align*}
\hat v(t,x,a)&=\sup_{c \in \C_a(0,x)}\int_0^{+\infty}
e^{-(\rho+\lambda)r}\left[U(c_r)+g\left(r+t,Y_r^{0,x},a\right)\right]\ud
r\\
&=\sup_{c \in \C_a(x)}\int_0^{+\infty}
e^{-(\rho+\lambda)r}\left[U(c_r)+g\left(r+t,Y_r^x,a\right)\right]\ud
r.
\end{align*}
For every $(t,x) \in \R_+\times (l(a),+\infty), \ c \in \C_a(x)$,
we put
$$
J(t,x,a;c):=\int_0^{+\infty}
e^{-(\rho+\lambda)r}\left[U(c_r)+g\left(r+t,Y_r^x,a\right)\right]\ud
r.
$$
Let $t_1,t_2 > 0$, with $t_1 < t_2$, $x_1,x_2 \in (l(a),+\infty)$,
with $x_1 < x_2$.
By setting $t_\eta=\eta t_1+(1-\eta)t_2$, $x_\eta=\eta
x_1+(1-\eta) x_2$, we have for all $(t,x) \in (0,+\infty)\times
(l(a),+\infty)$
\begin{align*}
\eta &
J(t_1,x_1,a;c_1)+(1-\eta)J(t_2,x_2,a;c_2)-J(t_\eta,x_\eta,a;c_\eta)\\
& =\int_0^{+\infty}e^{-(\rho+\lambda)r}\left[\eta
U(c_1(r))+(1-\eta)U(c_2(r))-U(c_\eta(r))\right]\ud r\\
& +\int_0^{+\infty}e^{-(\rho+\lambda)r}\left[\eta
g\left(r+t_1,Y_r^{x_1},a\right)+(1-\eta)g\left(r+t_2,Y_r^{x_2},a\right)-g\left(r+t_\eta,Y_r^{x_\eta},a\right)\right]\ud
r\\
& < \int_0^{+\infty}e^{-(\rho+\lambda)r}\left[\eta
g\left(r+t_1,Y_r^{x_1},a\right)+(1-\eta)g\left(r+t_2,Y_r^{x_2},a\right)-g\left(r+t_\eta,Y_r^{x_\eta},a\right)\right]\ud
r,
\end{align*}
by using the strict concavity of $U$. By  the semiconcavity of the
function $g$ and by taking the supremum of the functional $J$ over
the set $\C_a(x)$, we can derive the semiconcavity of $\hat v$ for
$(t,x) \in (0,+\infty) \times (l(a),+\infty)$.
Hence $D_{t,x}^+\hat v(t,x,a) \neq \emptyset$, so we have just to
prove that $D_{t,x}^+\hat v(t,x,a)$ is a singleton, for each
$(t,x) \in (0,+\infty) \times (l(a),+\infty)$. \noindent By using
the same arguments of Proposition \ref{prosta}, we get the Fr\'echet differentiability. \\
By Proposition 3.3.4 (e), pages 55-56 of~\cite{cs}, we get the
continuity of the couple $\ds \left(\frac{\partial \hat
v}{\partial t},\frac{\partial \hat v}{\partial x}\right)$ for
$(t,x) \in (0,+\infty) \times (l(a), +\infty)$, given $a \in \A$.
Then the value function $\hat v$ defined in \eqref{vhat} belongs
to $C^1((0,+\infty) \times (l(a),+\infty))$, given $a \in \A$.\\
To get that $\hat v (\cdot,\cdot,a) \in C^1([0,+\infty) \times
(l(a),+\infty))$ it is enough to extend the datum $g$ (and so the
value function $\hat v$) to small negative times and repeat the
above arguments.

Now we prove \eqref{xder} by using similar arguments to the ones
to check the final statement of Proposition \ref{prosta}. If we
get $x=l(a)$ in \eqref{vhat1}, then
$$
\hat v(t,l(a),a)=\int_t^{+\infty}e^{-(\rho +
\lambda)(s-t)}g(s,l(a),a)\ud s,\quad \forall t \geq 0.
$$
Now we send $x \to l(a)$ in \eqref{ipde} (this is possible since
$\hat v$, $g$ and $\displaystyle \frac{\partial \hat v}{\partial
t}$ are continuous in $x \in [l(a),+\infty)$\footnote{By Remark
4.4 of \cite{pt} we already know that $\hat v$ is differentiable
in $t$ on the boundary and in particular the continuity follows
from \eqref{vhat}.} and since $\displaystyle \frac{\partial \hat
v}{\partial x}$ is monotone in $x$) and we obtain
$$
(\rho + \lambda)\hat v\left(t,l(a)^+,a\right)-\frac{\partial \hat
v\left(t,l(a)^+,a\right)}{\partial t}-\tilde U\left(\frac{\partial
\hat v\left(t,l(a)^+,a\right)}{\partial
x}\right)-g\left(t,l(a)^+,a\right)=0.
$$
Comparing the last formulas, we obtain
\begin{equation*} 
\tilde U\left(\frac{\partial \hat
v\left(t,l(a)^+,a\right)}{\partial x}\right)=0 \
\Longleftrightarrow \ \frac{\partial \hat
v\left(t,l(a)^+,a\right)}{\partial x}=+\infty, \quad \forall t
\geq 0.
\end{equation*}
\end{proof}

\begin{lemma}
Suppose that Assumption \ref{h5} is satisfied. Let $v$ and $\hat
v$ be the value functions given in \eqref{vdef} and \eqref{vhat}
respectively. Then, given any $x> 0$ and calling $a_x$ a maximum
point of the problem \eqref{valuev}, we have
\begin{equation*}
D^+v(x) \subseteq D_x^+\hat v(0,x,a_x).
\end{equation*}
\end{lemma}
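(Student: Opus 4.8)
The plan is to reproduce, almost verbatim, the argument used for the stationary inclusion \eqref{eq:inclusionD+}, the only change being that $\hat v(x,a)$ is replaced by $\hat v(0,x,a)$ and that the relation \eqref{vv} is replaced by \eqref{valuev}; the time argument $t=0$ stays frozen throughout and plays no active role. First I would fix $x>0$ and recall that, since $v$ is concave, $D^+v(x)$ is nonempty and admits the representation
\[
D^+v(x)=\left\{p:v(x+h)-v(x)\leq ph,\ \forall h \text{ s.t. } x+h\ge 0\right\}.
\]
Fixing $p\in D^+v(x)$, and using that $\hat v(0,\cdot,a_x)$ is concave in $x$ (Proposition 4.2 of \cite{pt}), the desired membership $p\in D_x^+\hat v(0,x,a_x)$ is equivalent to the supergradient inequality
\[
\hat v(0,x+h,a_x)-\hat v(0,x,a_x)\leq ph,\qquad \forall h \text{ s.t. } x+h\ge l(a_x),
\]
so this is the statement I would aim to establish.

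Second, I would exploit the optimality of $a_x$ for $v(x)$ and of $a_{x+h}$ for $v(x+h)$ in \eqref{valuev}, which gives $\hat v(0,x,a_x)=v(x)$ and $\hat v(0,x+h,a_{x+h})=v(x+h)$, hence
\[
\hat v(0,x+h,a_{x+h})-\hat v(0,x,a_x)=v(x+h)-v(x)\leq ph,
\]
valid for every $h$ with $x+h\ge 0$. To replace $a_{x+h}$ by $a_x$ on the left-hand side I would check that $a_x$ is still admissible at the level $x+h$, i.e.\ $a_x\in I(x+h)=[-(x+h)/\bar z,(x+h)/\underline z]$; granting this, \eqref{valuev} yields $\hat v(0,x+h,a_x)\le \hat v(0,x+h,a_{x+h})$, and combining with the previous display gives exactly the target inequality.

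Third, I would split according to the sign of $h$. For $h\ge 0$ the monotonicity $I(x)\subseteq I(x+h)$ (valid since $0<x\le x+h$) gives $a_x\in I(x+h)$ at once, and the boundary case $x=l(a_x)$ needs no argument. For $h<0$ with $x+h\ge l(a_x)$, I would read off admissibility directly from $l(a_x)=\max(a_x\underline z,-a_x\bar z)$: the constraint $x+h\ge a_x\underline z$ gives $a_x\le (x+h)/\underline z$, the constraint $x+h\ge -a_x\bar z$ gives $a_x\ge -(x+h)/\bar z$, so again $a_x\in I(x+h)$. The only (mild) obstacle is precisely this verification for negative $h$, where one must be careful that the single scalar inequality $x+h\ge l(a_x)$ encodes both of the two-sided bounds defining $I(x+h)$; everything else transfers unchanged from the stationary proof.
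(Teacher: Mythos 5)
Your proposal is correct and follows essentially the same route as the paper: the paper's own proof of this lemma simply states that it works exactly as in the stationary case, which is precisely the transfer (with $t=0$ frozen) that you carry out. Your explicit verification that $x+h\ge l(a_x)=\max(a_x\underline z,-a_x\bar z)$ yields $a_x\in I(x+h)$ for $h<0$ is a detail the paper leaves implicit, and it is handled correctly.
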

\begin{proof}
It works exactly as well as in the stationary case.
\end{proof}

\noindent We come now to the final regularity result for the
nonstationary case.

\begin{theorem}
Suppose that Assumption \ref{h5} is satisfied. Let $v$, $\hat v$
be the value functions given in \eqref{vdef} and \eqref{vhat}
respectively. Then:
\begin{itemize}
\item $v\in
C^1(0,+\infty)$ and any maximum point in \eqref{vv} is internal
for every $x>0$; moreover $v'(0^+)=+\infty$;
\item
for every $a \in A$ we have $\hat v(\cdot, \cdot,a) \in
C^1\left([0,+\infty) \times (l(a),+\infty)\right)$; finally
\begin{equation*}
\ds \frac{\partial \hat v(t,l(a)^+,a)}{\partial x}=+\infty,\ {\rm
for\ every}\ t \ge 0.
\end{equation*}
\end{itemize}
\end{theorem}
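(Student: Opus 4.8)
The plan is to mirror closely the argument used in the stationary case, now invoking Proposition~\ref{smoothness} in place of Proposition~\ref{prosta}, together with the nonstationary inclusion $D^+v(x)\subseteq D_x^+\hat v(0,x,a_x)$ of the preceding lemma. The second bullet is already entirely contained in Proposition~\ref{smoothness}: it provides $\hat v(\cdot,\cdot,a)\in C^1([0,+\infty)\times(l(a),+\infty))$ and the blow-up $\partial_x\hat v(t,l(a)^+,a)=+\infty$ for every $t\ge 0$, so nothing further is required there. All the genuine content concerns the first bullet, namely the regularity of $v$ and the internality of the maximizers.

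First I would fix $x>0$ and let $a_x$ be any maximum point of \eqref{valuev}. Since $v$ is concave, $D^+v(x)\ne\emptyset$, and the inclusion lemma gives $D^+v(x)\subseteq D_x^+\hat v(0,x,a_x)$, so $D_x^+\hat v(0,x,a_x)\ne\emptyset$. The key observation is that \eqref{xder} forces $D_x^+\hat v(0,l(a),a)=\emptyset$: at the left endpoint $x=l(a)$ an element $p$ of the superdifferential would have to satisfy $p\ge \partial_x\hat v(0,l(a)^+,a)=+\infty$, which is impossible. Since $(x,a_x)\in\X$ yields $x\ge l(a_x)$, the nonemptiness of $D_x^+\hat v(0,x,a_x)$ rules out $x=l(a_x)$; hence $x>l(a_x)$, i.e.\ every maximum point is internal.

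Next, internality places $x$ in the interior $(l(a_x),+\infty)$, where Proposition~\ref{smoothness} ensures that $\hat v(0,\cdot,a_x)$ is differentiable, so $D_x^+\hat v(0,x,a_x)$ is the singleton $\{\partial_x\hat v(0,x,a_x)\}$. Through the inclusion, $D^+v(x)$ --- nonempty and contained in a singleton --- is itself a singleton, so $v$ is differentiable at $x$; being concave and differentiable on all of $(0,+\infty)$, it belongs to $C^1(0,+\infty)$. For the boundary blow-up $v'(0^+)=+\infty$ I would argue exactly as in the stationary case: taking $a=0$ in \eqref{valuev} gives $v(x)\ge \hat v(0,x,0)$ with equality at $x=0$ (both vanish), and since $a=0$ reduces to the stationary setting (Remark~\ref{rm:hpH5}), where $l(0)=0$ and $\partial_x\hat v(0,0^+,0)=+\infty$ by \eqref{vderivinfty}, a difference-quotient comparison forces $v'(0^+)=+\infty$.

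The step carrying the real analytic weight is already discharged by Proposition~\ref{smoothness} --- the semiconcavity-plus-viscosity argument establishing the $C^1$ regularity of $\hat v$ and the boundary blow-up --- so the present theorem is essentially an assembly of that result with the inclusion lemma. The one point demanding care is the bookkeeping for $a=0$: Assumption~\ref{h5} explicitly excludes $a=0$, so one must fall back on the stationary analysis of Section~\ref{regstat} for that value, both when a maximizer happens to equal $0$ and in establishing $v'(0^+)=+\infty$.
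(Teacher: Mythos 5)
Your proposal is correct and follows essentially the same route as the paper: the paper's proof of this theorem is literally ``it follows as in the stationary case,'' and that stationary argument is exactly your assembly of the inclusion $D^+v(x)\subseteq D_x^+\hat v(0,x,a_x)$, the emptiness of the superdifferential at $x=l(a)$ forced by the boundary blow-up, and the singleton property from Proposition~\ref{smoothness}. Your explicit handling of the excluded case $a=0$ via the stationary analysis is a point the paper only addresses implicitly (inside the proof of Proposition~\ref{smoothness}), so it is a welcome clarification rather than a deviation.
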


\begin{proof}
It follows as in the stationary case.
\end{proof}

\begin{remark}\label{gregularity}
We should stress that even if the semiconcavity assumption
\ref{h5} does not hold, the continuous differentiability in $x$ of
the function $g$ given in \eqref{gfunction} is still guaranteed in
the case of power utility and when the density $p(t,\ud z)$ is
supposed to be ``sufficiently regular'' in $x$.
\end{remark}

\section{Existence and characterization of optimal strategies}\label{verth}
\vspace{2mm} Let Assumptions \ref{H}, \ref{h2} and  \ref{h5} stand
in force throughout this section.
\subsection{Feedback representation form of the optimal
strategies}

The following result guarantees the existence and uniqueness of
the optimal control for the auxiliary problem \eqref{vhat}.
\begin{proposition} \label{genuniqueness}
Let $\hat v$ be the value function given in \eqref{vhat}. Fix $a
\in A$. We denote by $I=(U^\prime)^{-1}:(0,+\infty) \to
(0,+\infty)$ the inverse function of the derivative $U^\prime$ and
we consider the following nonnegative measurable function for each
$a \in A$:
\begin{equation} \label{genchat}
\hat c(t,x,a)=I\left(\frac{\partial \hat v(t,x,a)}{\partial
x}\right)=\arg \max_{c\geq 0}\left[U(c)-c\frac{\partial \hat
v(t,x,a)}{\partial x}\right].
\end{equation}
Let $(t,x) \in \R_+ \times [l(a), +\infty)$. There exists a unique
optimal couple $(\bar c_\cdot,\bar Y_\cdot)$ at $(t,x)$ for the
auxiliary problem introduced in \eqref{vhat} given by:
\begin{equation}\label{gencbar}
\bar c_s:=\hat c(s,\bar Y_s,a), \quad s\ge t,
\end{equation}
where $\bar Y_s$, $s \geq t$, is the unique solution of
\begin{equation}\label{cpgen}
\left\{ \begin{array}{ll}
Y'_s=-\hat c(s,Y_s,a),\qquad s \geq t\\
Y_t=x.
\end{array}
\right.
\end{equation}
Note that the triplet $(s,\bar Y_s, a) \in \D$, for $s \geq t$.
\end{proposition}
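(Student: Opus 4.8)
The plan is to read this as a verification theorem resting on the $C^1$ regularity of $\hat v$ obtained in the previous section. There are two logically separate tasks. First, one must show that the closed-loop ODE \eqref{cpgen} is well posed and produces an admissible state trajectory, so that the pair $(\bar c,\bar Y)$ in \eqref{gencbar}--\eqref{cpgen} is well defined. Second, one must use the IPDE \eqref{ipde} together with the Fenchel identity relating $U$ and $\tilde U$ to verify that this feedback control attains the supremum in \eqref{vhat} and is the only control that does so.

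For the ODE, I would first invoke Proposition \ref{smoothness} (respectively Proposition \ref{prosta} in the stationary case) to get that $(s,y)\mapsto \partial_x\hat v(s,y,a)$ is continuous on $(0,+\infty)\times(l(a),+\infty)$, whence $\hat c(s,y,a)=I(\partial_x\hat v(s,y,a))$ is continuous and Peano's theorem yields a local solution of \eqref{cpgen}. Uniqueness is the delicate point because $\hat c$ need not be Lipschitz, so I would argue by monotonicity rather than by a Lipschitz estimate: since $\hat v$ is concave in $x$, the map $y\mapsto\partial_x\hat v(s,y,a)$ is nonincreasing, and $I=(U')^{-1}$ is decreasing, so $y\mapsto\hat c(s,y,a)$ is nondecreasing and the drift $-\hat c$ is nonincreasing in $y$. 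This one-sided Lipschitz (monotonicity) property forces \eqref{cpgen} to have at most one solution. Because $\hat c\ge 0$, the solution $\bar Y$ is nonincreasing, hence bounded above by $x$; and because $\partial_x\hat v(s,l(a)^+,a)=+\infty$ gives $\hat c(s,l(a)^+,a)=I(+\infty)=0$, the boundary $l(a)$ is an equilibrium of the ODE, so $\bar Y$ remains in $[l(a),x]$. Thus $\bar Y$ is global and $(s,\bar Y_s,a)\in\D$ for all $s\ge t$, which also shows $\bar c\in\C_a(t,x)$.

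For the verification step, fix any $c\in\C_a(t,x)$ with state $Y$ and differentiate $s\mapsto e^{-(\rho+\lambda)(s-t)}\hat v(s,Y_s,a)$. Using $Y'_s=-c_s$ and substituting $\partial_t\hat v$ from the IPDE \eqref{ipde}, the derivative equals $e^{-(\rho+\lambda)(s-t)}\bigl[-\tilde U(\partial_x\hat v)-g-c_s\partial_x\hat v\bigr]$, and the Fenchel inequality $\tilde U(p)\ge U(c)-cp$ gives $\tilde U(\partial_x\hat v)+c_s\partial_x\hat v\ge U(c_s)$. Integrating from $t$ to $T$, dropping the nonnegative term $e^{-(\rho+\lambda)(T-t)}\hat v(T,Y_T,a)$ and letting $T\to+\infty$ yields $\hat v(t,x,a)\ge J(t,x,a;c)$, the gain functional of \eqref{vhat1}. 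For the candidate $\bar c_s=\hat c(s,\bar Y_s,a)$ the Fenchel inequality is an equality, since by \eqref{genchat} the control $\bar c_s$ realizes the supremum defining $\tilde U(\partial_x\hat v(s,\bar Y_s,a))$; hence all inequalities are saturated, and after checking that $e^{-(\rho+\lambda)(T-t)}\hat v(T,\bar Y_T,a)\to 0$ via the growth bound \eqref{hatvgrowth} and the discount condition of Assumption \ref{h2}, one gets $\hat v(t,x,a)=J(t,x,a;\bar c)$, i.e. optimality.

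Uniqueness of the optimizer follows by reversing the argument: if $c$ is optimal, the chain of inequalities must hold with equality for a.e. $s$, forcing $U(c_s)-c_s\partial_x\hat v(s,Y_s,a)=\tilde U(\partial_x\hat v(s,Y_s,a))$; strict concavity of $U$ makes the maximizer in \eqref{genchat} unique, so $c_s=\hat c(s,Y_s,a)$ a.e., whence $Y$ solves \eqref{cpgen} and, by the ODE uniqueness above, $Y=\bar Y$ and $c=\bar c$. I expect the main obstacle to be the well-posedness of \eqref{cpgen} at the free boundary $\{x=l(a)\}$: since $\hat c$ is only continuous and $\partial_x\hat v$ blows up there, no Lipschitz theory applies and uniqueness must be extracted from the monotonicity of $\hat c$ while controlling the trajectory near $l(a)$; a secondary technical point is justifying the transversality limit at infinity from \eqref{hatvgrowth} and Assumption \ref{h2}.
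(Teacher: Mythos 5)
Your proposal is correct and its core verification step coincides with the paper's: the paper isolates exactly your computation as Lemma \ref{identityrel} (differentiate $e^{-(\rho+\lambda)s}\hat v(s,Y_s,a)$ along an admissible trajectory, substitute $\partial_t\hat v$ from the IPDE \eqref{ipde}, add and subtract $U(c_s)$ to expose the Fenchel gap $U(c_s)-c_s\partial_x\hat v-\tilde U(\partial_x\hat v)\le 0$, and kill the terminal term via the growth bound \eqref{hatvgrowth}), and the existence part (continuity of $\hat c$, extension by zero below $l(a)$, Peano, and confinement of the trajectory in $[l(a),x]$ because the drift vanishes at the boundary) is also the same. Where you genuinely diverge is in the two uniqueness claims. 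For uniqueness of the optimal control, the paper does not pass through the Fenchel equality characterization: it takes two optimal controls $\bar c_1,\bar c_2$, forms $c_\eta=\eta\bar c_1+(1-\eta)\bar c_2$, and uses strict concavity of $U$ together with concavity of $g$ and linearity of $c\mapsto Y^x(c)$ to get $J(t,x,a;c_\eta)>\hat v(t,x,a)$ unless $\bar c_1=\bar c_2$ a.e.; this is a pure convexity argument that never mentions the ODE. Your route (equality in Fenchel a.e.\ forces $c_s=\hat c(s,Y_s,a)$ by strict concavity of $U$, then conclude by uniqueness for \eqref{cpgen}) is equally valid and has the advantage of directly proving that \emph{every} optimal control is of feedback form. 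For uniqueness of the solution of \eqref{cpgen} itself, your observation that $y\mapsto\hat c(s,y,a)=I(\partial_x\hat v(s,y,a))$ is nondecreasing (concavity of $\hat v$ in $x$ composed with the decreasing map $I$), so that the drift $-\hat c$ is one-sided Lipschitz and forward uniqueness holds, is an argument the paper does not give at all: the paper only invokes Peano and leaves the uniqueness asserted in the statement of \eqref{cpgen} to be recovered indirectly from uniqueness of the optimal control. So your proof is not only correct but closes a small gap more explicitly than the published one; the paper's convex-combination argument, in exchange, is insensitive to any pathology of the closed-loop ODE.
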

\begin{proof}
A rigorous proof can be found in Appendix.
\end{proof}
\noindent  Under suitable assumptions, we state the verification
theorem for the coupled IPDE \eqref{valuev}-\eqref{ipde}, which
provides the optimal control in feedback form.
\begin{theorem} \label{genexistence}
There exists an optimal control policy $(\alpha^*,c^*)$ given by
\begin{align}
\alpha_{k+1}^*&=\arg \max_{-\frac{X_k^x}{\bar z}\leq a\leq \frac{X_k^x}{\underline z}}\hat v(0,X_k^x,a),\quad k\geq 0 \label{genalphastar}\\
c_t^*&=\hat c\left(t-\tau_k, Y^{(k)}_t,\alpha_{k+1}^*\right),
\quad \tau_k<t\leq \tau_{k+1}, \label{gencstar}
\end{align}
where $X_k^x$ is the wealth investor at time $\tau_k$ given in
\eqref{wealth} and $Y^{(k)}_\cdot $ is the unique solution of
\begin{equation}
\left\{ \begin{array}{ll}
Y'_s=-\hat c(s,Y_s,\alpha_{k+1}^*),\qquad  \tau_k<s \leq \tau_{k+1}\\
Y_{\tau_k}=X_k^x.
\end{array}
\right.
\end{equation}
\end{theorem}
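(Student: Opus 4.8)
The plan is to prove this as a verification theorem: construct the candidate feedback policy, check it is admissible, and then show by a dynamic-programming / martingale argument that it attains the value function $v(x)$. First I would check that the policy is well defined. For each $k$, since $\hat v(0,X_k^x,\cdot)$ is continuous and concave on the compact interval $[-X_k^x/\bar z,X_k^x/\underline z]$, the maximizer $\alpha_{k+1}^*$ in \eqref{genalphastar} exists; by the regularity theorem of Section \ref{sectionreg} (any maximum point in \eqref{valuev} is internal, since $\partial_x\hat v(t,l(a)^+,a)=+\infty$) this maximizer lies in the interior, so the boundary $\partial\X$ is never hit and $\partial_x\hat v>0$ is finite there, making $\hat c=I(\partial_x\hat v)$ well defined. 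Given $\alpha_{k+1}^*$, Proposition \ref{genuniqueness} furnishes a unique feedback consumption and trajectory $Y^{(k)}$, so $(\alpha^*,c^*)$ is unambiguously determined; moreover $\alpha_{k+1}^*$ is $\G_{\tau_k}$-measurable and $c^*$ is $\mathbb G^c$-predictable, as required for admissibility.

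For admissibility I would note that on $(\tau_k,\tau_{k+1}]$ the pair $(\alpha_{k+1}^*,c^*)$ belongs to the feasible set $\C_{\alpha_{k+1}^*}(X_k^x)$, whose defining constraint \eqref{la} is exactly $Y_s+\alpha_{k+1}^*z\ge 0$ for all admissible returns $z$. Evaluating at $s=\tau_{k+1}$, $z=Z_{k+1}$ gives $X_{k+1}^x\ge 0$ a.s., so $(\alpha^*,c^*)\in\A(x)$.

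The core step is the one-period optimality identity. Taking expectation over $\tau_1\sim\mathrm{Exp}(\lambda)$ and $Z_1\sim p(\tau_1,\ud z)$ in the reward of \eqref{vbis} and applying Fubini to the running-utility term,
\begin{align*}
&\esp{\int_0^{\tau_1}e^{-\rho s}U(c_s)\ud s+e^{-\rho\tau_1}v\Big(x-\int_0^{\tau_1}c_s\ud s+aZ_1\Big)}\\
&=\int_0^{+\infty}e^{-(\rho+\lambda)s}\Big[U(c_s)+\lambda\int v\Big(x-\int_0^sc_u\ud u+az\Big)p(s,\ud z)\Big]\ud s,
\end{align*}
whose supremum over $c\in\C_a(x)$ is precisely $\hat v(0,x,a)$. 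Choosing $a=\alpha_{k+1}^*$ attains $\sup_a\hat v(0,x,a)=v(x)$ by \eqref{valuev}, while Proposition \ref{genuniqueness} guarantees that the feedback $c^*$ attains the inner supremum $\hat v(0,x,\alpha_{k+1}^*)$. By the memoryless property of the Poisson arrivals and the time-homogeneity of the period problem in the elapsed time $t-\tau_k$, the feedback policy therefore realises equality in the one-step relation \eqref{dpp} at every trading date. Setting $M_k:=\int_0^{\tau_k}e^{-\rho s}U(c_s^*)\ud s+e^{-\rho\tau_k}v(X_k^x)$, this equality reads $\esp{M_{k+1}\mid\F_{\tau_k}}=M_k$, so $(M_k)$ is a martingale (integrability coming from $v(x)\le Kx^\gamma$ and finiteness of the value), and $\esp{M_n}=M_0=v(x)$ for all $n$.

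Finally I would let $n\to\infty$. Since $U\ge 0$, monotone convergence gives $\esp{\int_0^{\tau_n}e^{-\rho s}U(c_s^*)\ud s}\uparrow\esp{\int_0^{+\infty}e^{-\rho s}U(c_s^*)\ud s}$, the reward of $(\alpha^*,c^*)$, so it remains to establish the transversality condition $\lim_{n\to\infty}\esp{e^{-\rho\tau_n}v(X_n^x)}=0$, which I expect to be the main obstacle. I would bound it via $v\le K x^\gamma$ from \eqref{vgrowth} together with the integrability assumption $\rho>b\gamma+\lambda(k^\gamma/\underline z^\gamma-1)$ of Assumption \ref{h2}: using $\alpha_i\le X_{i-1}^x/\underline z$, Jensen's inequality applied to $(1+z)^\gamma$ with condition c) of Assumption \ref{H}, and the fact that $\tau_n$ is a sum of independent exponentials, one shows $\esp{e^{-\rho\tau_n}(X_n^x)^\gamma}$ factorises into a per-step contraction and hence decays geometrically, exactly as in the finiteness estimates of \cite{pt}. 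Combining the two limits yields $\esp{\int_0^{+\infty}e^{-\rho s}U(c_s^*)\ud s}=v(x)$, so $(\alpha^*,c^*)$ is optimal, completing the proof.
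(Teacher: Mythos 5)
Your proposal is correct and follows essentially the same route as the paper: admissibility of the feedback policy, the one-step dynamic-programming equality $v(X_k^x)=\esp{\int_{\tau_k}^{\tau_{k+1}}e^{-\rho(s-\tau_k)}U(c_s^*)\ud s+e^{-\rho(\tau_{k+1}-\tau_k)}v(X_{k+1}^x)\mid\G_{\tau_k}}$ obtained from Proposition \ref{genuniqueness} and the Fubini identity of Lemma 4.1 of~\cite{pt}, iteration over trading dates, and passage to the limit. The only difference is presentational: you phrase the iteration as a martingale property and spell out the transversality estimate $\esp{e^{-\rho\tau_n}v(X_n^x)}\to 0$ via the growth bound \eqref{vgrowth} and Assumption \ref{h2}, a step the paper leaves implicit by appealing to the finiteness estimates of~\cite{pt}.
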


\begin{proof}
Thanks to Proposition \ref{genuniqueness}, we can prove the
existence of an optimal feedback control $(\alpha^*,c^*)$ for
$v(x)$.\\
Given $x \geq 0$, consider the control policy $(\alpha^*,c^*)$
defined by \eqref{genalphastar}-\eqref{gencstar}. By construction,
the associated wealth process satisfies for all $k\geq 0$,
\begin{align*}
X_{k+1}^x&=X_k^x-\int_{\tau_k}^{\tau_{k+1}}c_s^*\ud
s+\alpha_{k+1}^*Z_{k+1}\\
&=Y^{(k)}_{\tau_{k+1}}+\alpha_{k+1}^*Z_{k+1}\\
&\geq l(\alpha_{k+1}^*)+\alpha_{k+1}^*Z_{k+1} \geq 0,\ {\rm a.s.}
\end{align*}
since $-\underline z\leq Z_{k+1}\leq \bar z$ a.s. Hence,
$(\alpha^*,c^*) \in \mathcal A(x)$, i.e. $(\alpha^*,c^*)$ is
admissible.
By Proposition \ref{genuniqueness} and definition of
$\alpha_{k+1}^*$ and $v$, we have:
\begin{align*}
&v(X_k^x)\\
& =\hat
v(0,X_k^x,\alpha_{k+1}^*)\\
&=\int_{\tau_k}^{+\infty}e^{-(\rho+\lambda)(s-\tau_k)}\left[U(\hat
c_s(\tau_k, Y_s^{(k)},\alpha_{k+1}^*))+g\right](s-\tau_k,
Y_s^{(k)},\alpha_{k+1}^*)\ud s\\
&=\esp{\int_{\tau_k}^{\tau_{k+1}}e^{-\rho(s-\tau_k)}U(c_s^*)\ud
s+e^{-(\rho+\lambda)(\tau_{k+1}-\tau_k)}v(X_{k+1}^x)\bigg{|}\G_{\tau_k}},
\end{align*}
by Lemma 4.1 of~\cite{pt}. By iterating these relations for all
$k$, and using the law of conditional expectations, we obtain
$$
v(x)=\esp{\int_0^{\tau_n}e^{-\rho s}U(c_s^*)\ud s+e^{-\rho
\tau_n}v(X_n^x)},
$$
for all $n$. By sending $n$ to infinity, we get:
$$
v(x)=\esp{\int_0^{+\infty}e^{-\rho s}U(c_s^*)\ud s},
$$
which provides the required result. \end{proof}

\begin{remark}
In the stationary case the Assumption \ref{h5} is not needed to
prove the existence of feedback controls, as it is automatically
satisfied. Moreover we note that in the stationary case there is
not an explicit dependence on $t$ of the optimal control in
feedback form. Indeed, it is given by the couple $(\alpha^*,c^*)$,
where
\begin{align*}
\alpha_{k+1}^*&=\arg \max_{-\frac{X_k^x}{\bar z}\leq a\leq \frac{X_k^x}{\underline z}}\hat v(X_k^x,a),\quad k\geq 0 \\
c_t^*&=\hat c\left( Y_t^{(k)},\alpha_{k+1}^*\right), \quad
\tau_k<t\leq \tau_{k+1}, 
\end{align*}
and in particular $\hat c$ is the restriction on the set $\X$ of
the nonnegative measurable functions introduced in
\eqref{genchat}, i.e.
\begin{equation}\label{chat}
\hat c(x,a)=I\left(\frac{\partial \hat v(x,a)}{\partial
x}\right)={\rm arg} \max_{c \ge 0}\left[U(c)-c\frac{\partial \hat
v(x,a)}{\partial x}\right].
\end{equation}
\end{remark}

\begin{remark} \label{uniquen}
It is not trivial to state the uniqueness of the strategy
$(a^*,c^*)$, whose existence is proved in Theorem
\ref{genexistence}. We can only say that, if we prove that $a^*$
is unique, then also
$c^*$ will be unique thanks to 
Theorem \ref{genexistence}. The problem is strictly related to the
behavior of the functions $\hat v$ and $g$ that are ex ante not
strictly concave in $a$.
\end{remark}

\begin{remark} \label{rm:strictconcavityofv}
>From the feedback representation given in Proposition
\ref{genuniqueness} and in Theorem \ref{genexistence}, it follows
that the function $v$ is strictly concave and that the functions
$g$ and $\hat v$ are strictly concave in $x$. Indeed, given two
points $x_1,x_2 > l(a)$ and calling $c_1^*, c_2^*$ the
corresponding optimal consumption paths for the original problem,
we have, for $\eta \in (0,1)$,
\begin{equation}\label{scv}
\begin{split}
v(\eta x_1 &+ (1-\eta) x_2)- \eta v(x_1)-(1-\eta)v(x_2)\\
\qquad & \ge \esp{\int_0^{+\infty} e^{-\rho s}\left[U(\eta
c_{1s}^* + (1-\eta) c_{2s}^*) -U(\eta c_{1s}^*) -  (1-\eta)U(
c_{2s}^*)\right]\ud s}.
\end{split}
\end{equation}
Thanks to the feedback formulas, the two consumption rates $c_1^*,
c_2^*$ must be different in a set of positive measure ($\ud t
\times \ud \P$) so the right-hand-side of \eqref{scv} is strictly
positive and we get strict concavity of $v$. Then the strict
concavity of $g$ in $x$ follows directly from its definition
whereas the strict concavity of $\hat v$ in $x$ follows from the
IPDE \eqref{ipde}.
\end{remark}

\subsection{Consumption policy between two trading dates}

>From the regularity properties discussed in Subsection
\ref{sectionreg}, we can deduce more properties of the optimal
consumption policy. We discuss them separately for the stationary
and the nonstationary case.

\subsubsection{The stationary case}

\begin{proposition}
Let $a\in A$ and $(t,x) \in \R_+ \times [l(a),+\infty) $. Let
$(\bar c_\cdot, \bar Y_\cdot )$ be the optimal couple for the
auxiliary problem starting at $(t,x)$. If $x=l(a)$, then $\bar
c\equiv 0$, so  $\bar Y \equiv l(a)$. If $x>l(a)$ then $\bar c$ is
continuous, strictly positive and strictly decreasing while $\bar
Y$ is strictly decreasing and strictly convex. Moreover $\lim_{t
\to +\infty} \bar c_t=0$ and $\lim_{t \to +\infty} \bar Y_t=l(a)$.
\end{proposition}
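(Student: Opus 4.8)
The plan is to reduce every assertion to the feedback identity $\bar c_s=\hat c(\bar Y_s,a)$ together with the regularity already proved in this subsection. The boundary case is immediate: if $x=l(a)$, then every $c\in\C_a(x)$ satisfies $\int_t^s c_u\,\ud u\le x-l(a)=0$, so $c\equiv0$; hence the unique optimal control is $\bar c\equiv0$ and $\bar Y\equiv l(a)$. Assume now $x>l(a)$. I first record the behaviour of the feedback map. Since $I=(U')^{-1}$ is continuous and strictly decreasing, $\frac{\partial\hat v}{\partial x}(\cdot,a)$ is continuous by Proposition \ref{prosta} and strictly decreasing by Remark \ref{rm:strictconcavityofv}, and $\frac{\partial\hat v}{\partial x}(l(a)^+,a)=+\infty$ by \eqref{vderivinfty}, the function $\hat c(\cdot,a)=I\big(\frac{\partial\hat v}{\partial x}(\cdot,a)\big)$ from \eqref{chat} is continuous and strictly increasing on $(l(a),+\infty)$, strictly positive there, with $\hat c(l(a)^+,a)=I(+\infty)=0$ (using $U'(0^+)=+\infty$).

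The heart of the proof, and the only delicate point, is to show that the boundary is never reached in finite time, i.e. $\bar Y_s>l(a)$ for every finite $s\ge t$. This cannot be obtained from a naive uniqueness argument for \eqref{cpgen}, because $\hat c(\cdot,a)$ fails to be Lipschitz at $l(a)$ (where $\frac{\partial\hat v}{\partial x}$ blows up), so finite-time absorption at the equilibrium $l(a)$ is a priori possible. Instead I would argue through the costate $p_s:=\frac{\partial\hat v}{\partial x}(\bar Y_s,a)=U'(\bar c_s)$. On any interval where $\bar Y_s>l(a)$ we have $\hat v(\cdot,a)\in C^2$ by the stationary regularity result, so $p$ is $C^1$; differentiating \eqref{iode} in $x$ and using $\tilde U'\big(\frac{\partial\hat v}{\partial x}\big)=-\hat c$ gives $\hat c\,\frac{\partial^2\hat v}{\partial x^2}=\frac{\partial g}{\partial x}-(\rho+\lambda)\frac{\partial\hat v}{\partial x}$, and hence, with $\bar Y_s'=-\hat c(\bar Y_s,a)$,
$$
\dot p_s=(\rho+\lambda)\,p_s-\frac{\partial g}{\partial x}(\bar Y_s,a).
$$
Because $\bar Y$ is decreasing and stays in the compact set $[l(a),x]$, on which $\frac{\partial g}{\partial x}(\cdot,a)$ is bounded (recall $g(x,a)=\lambda\int v(x+az)p(\ud z)$ with $v\in C^1$), Gronwall's inequality bounds $p_s$ on every finite time interval. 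If the boundary were hit at a finite time $s_0=\inf\{s\ge t:\bar Y_s=l(a)\}$, then $p_s=\frac{\partial\hat v}{\partial x}(\bar Y_s,a)\to+\infty$ as $s\uparrow s_0$, contradicting this bound; thus $s_0=+\infty$.

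With $\bar Y_s>l(a)$ for all finite $s$ the remaining claims follow readily. Since $\bar Y_s'=-\hat c(\bar Y_s,a)<0$, $\bar Y$ is strictly decreasing; being the solution of an ODE with continuous right-hand side it is $C^1$, so $\bar c_s=-\bar Y_s'=\hat c(\bar Y_s,a)$ is continuous, and strictly positive because $\bar Y_s>l(a)$. Composing the strictly increasing $\hat c(\cdot,a)$ with the strictly decreasing $\bar Y$ shows $\bar c$ is strictly decreasing, whence $\bar Y'=-\bar c$ is strictly increasing and $\bar Y$ is strictly convex. Finally, being decreasing and bounded below, $\bar Y_s\downarrow\ell\ge l(a)$; then $\bar Y_s'=-\hat c(\bar Y_s,a)\to-\hat c(\ell,a)$, and as the limit of the derivative of a convergent function must vanish, $\hat c(\ell,a)=0$ forces $\ell=l(a)$. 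Consequently $\bar Y_s\to l(a)$ and $\bar c_s=\hat c(\bar Y_s,a)\to\hat c(l(a)^+,a)=0$ as $s\to+\infty$.
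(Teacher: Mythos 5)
Your proof follows the same strategy as the paper's: the boundary case from the constraint set, monotonicity of the feedback map $\hat c(\cdot,a)=I\bigl(\frac{\partial \hat v}{\partial x}(\cdot,a)\bigr)$ via Remark \ref{rm:strictconcavityofv}, exclusion of finite-time absorption at $l(a)$ through a singularity of the costate (the paper only gestures at this by pointing to the proof of Proposition \ref{pr:regdiff}; you make it explicit by differentiating \eqref{iode}, which is legitimate since $\hat v(\cdot,a)\in C^2(l(a),+\infty)$ in the stationary case), and then the monotonicity, convexity and limit statements. Your route to the limits is slightly different (you identify $\lim \bar Y$ first from the ODE, the paper first gets $\bar c_t\to 0$ from $\int_0^{+\infty}\bar c_s\,\ud s\le x-l(a)$ and monotonicity), but both are sound.

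There is, however, one step that is wrong as justified: the claim that $\frac{\partial g}{\partial x}(\cdot,a)$ is bounded on the compact set $[l(a),x]$ ``since $v\in C^1$''. The function $v$ is $C^1$ only on $(0,+\infty)$ with $v'(0^+)=+\infty$, and as $y\downarrow l(a)$ the argument $y+az$ tends to $0$ for $z$ near the relevant endpoint of the support of $p(\ud z)$; in the discrete-support case with $a>0$, for instance, $\frac{\partial g}{\partial x}(y,a)\ge \lambda\, p(\{-\underline z\})\,v'(y-a\underline z)\to+\infty$ as $y\downarrow a\underline z=l(a)$. The paper itself only controls this derivative by $\lambda v'(y-l(a))$, which blows up at the boundary, so boundedness on $[l(a),x]$ cannot be assumed. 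Fortunately the conclusion survives without it: since $g$ is nondecreasing in $x$, $\frac{\partial g}{\partial x}\ge 0$, so the costate equation yields the one-sided bound $\dot p_s\le(\rho+\lambda)p_s$ and hence $p_s\le p_t\,e^{(\rho+\lambda)(s-t)}$ on $[t,s_0)$ by Gronwall, which already contradicts $p_s\to+\infty$ as $s\uparrow s_0$. Replace ``bounded'' by ``nonnegative'' and your argument is complete.
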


\begin{proof}
The first statement follows immediately from the setting of the
auxiliary problem. We prove the second statement. Indeed, by
\eqref{chat} and Remark \ref{rm:strictconcavityofv} it follows
that the function $\hat c$ is strictly increasing and continuous
in $x$. Since $\bar c_t=\hat c(\bar Y_t,a)$ and $\bar Y$ is
continuous and decreasing, then also $\bar c$ is decreasing.
Moreover, $\bar c_t >0$ for every $t$: indeed if it becomes zero
in finite time then the associated costate would have a
singularity and this is impossible: see the proof of Proposition
\ref{pr:regdiff} in the non stationary case. The strict positivity
of $\bar c$ implies that $\bar Y$ is strictly decreasing and so,
by \eqref{chat} that $\bar c $ is strictly decreasing and $\bar Y$
is strictly convex.

\noindent Finally, by the definition of the auxiliary control
problem, $\int_0^{+\infty} \bar c_s\ud s \leq x-l(a)$ which
implies the limit of $\bar c$. If the limit of $\bar Y$ is $x_1
>l(a)$, we get from the feedback formula \eqref{gencbar} that
$$ \lim_{t \to +\infty} \bar c_t= \hat c(x_1,a)>0$$ which is impossible.
\end{proof}

\noindent The regularity results for $c$ then allow to deduce an
autonomous equation for the optimal consumption policy between two
trading dates.
\begin{proposition}
Suppose that $U\in C^2((0,\infty))$ with $U''(x)<0$ for all $x$.
Then the wealth process $Y$  between two trading dates is twice
differentiable and satisfies the second-order ODE
\begin{equation}\label{auto}
\frac{\ud^2 Y_t}{\ud t^2} = \frac{g'(Y_t) -
(\rho+\lambda)U'(c_t)}{U''(c_t)},\quad c_t = -\frac{\ud Y_t}{\ud
t}.
\end{equation}
\end{proposition}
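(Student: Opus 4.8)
The plan is to derive the second-order ODE \eqref{auto} by differentiating the feedback relation \eqref{gencbar}. We start from the optimal consumption law $c_t = \hat c(Y_t, a)$, where by \eqref{chat} we have $\hat c(x,a) = I(\partial_x \hat v(x,a))$ with $I = (U')^{-1}$. Equivalently, the first-order condition characterizing $\hat c$ reads $U'(c_t) = \partial_x \hat v(Y_t, a)$. This identity is the cornerstone: it relates the consumption rate to the $x$-derivative of the value function along the optimal trajectory, and the ODE will emerge from differentiating it in $t$ and using the IODE \eqref{iode} to eliminate $\hat v$ itself.

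First I would establish the regularity needed to differentiate. From the preceding proposition the optimal control $\bar c$ is continuous and strictly positive on $(l(a),+\infty)$, and $\bar Y$ is strictly decreasing; moreover $\hat v(\cdot,a)\in C^2(l(a),+\infty)$ by the stationary regularity theorem, and with $U\in C^2$ and $U''<0$ the inverse $I$ is $C^1$ on $(0,+\infty)$. Hence $\hat c(\cdot,a) = I(\partial_x\hat v(\cdot,a))$ is $C^1$ in $x$ away from the boundary, and since $c_t = -Y'_t = \hat c(Y_t,a)$ with $Y$ of class $C^1$ (it solves \eqref{cpgen}), a bootstrap shows $Y\in C^2$: the right-hand side $-\hat c(Y_t,a)$ is $C^1$ in $Y_t$, so $Y'_t$ is differentiable. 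This gives $c_t=-Y'_t$ differentiable and $Y''_t = -c'_t$ well defined.

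The key computation is to differentiate the first-order condition $U'(c_t) = \partial_x \hat v(Y_t,a)$ in $t$. The left side yields $U''(c_t)\,c'_t$. For the right side I would differentiate the IODE \eqref{iode}, namely $(\rho+\lambda)\hat v(x,a) - \tilde U(\partial_x\hat v(x,a)) - g(x,a) = 0$, with respect to $x$. Using the convex-duality identity $\tilde U'(\partial_x\hat v) = -\,\hat c = -I(\partial_x\hat v)$ (since $\tilde U$ is the Legendre transform of $U$, its derivative recovers the minimizing argument), the $x$-derivative of \eqref{iode} becomes $(\rho+\lambda)\,\partial_x\hat v + \hat c\,\partial_{xx}\hat v - g'(x) = 0$, which lets me solve for $\partial_{xx}\hat v(Y_t,a)$. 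Evaluating along the trajectory and combining with $\tfrac{d}{dt}\partial_x\hat v(Y_t,a) = \partial_{xx}\hat v(Y_t,a)\cdot Y'_t = -c_t\,\partial_{xx}\hat v(Y_t,a)$, I match the two expressions for $c'_t$. After substituting $\partial_x\hat v = U'(c_t)$ and $Y''_t = -c'_t$, the algebra collapses to exactly $Y''_t = \bigl(g'(Y_t) - (\rho+\lambda)U'(c_t)\bigr)/U''(c_t)$, which is \eqref{auto}.

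The main obstacle I anticipate is the handling of the convex-conjugate derivative and making the duality identity $\tilde U'(p) = -I(p)$ precise, together with ensuring all quantities are evaluated in the interior where the regularity genuinely holds. One must be careful that $\partial_x\hat v(Y_t,a)$ stays in the range where $I$ and $\tilde U'$ are smooth; this is guaranteed because $Y_t>l(a)$ for all finite $t$ (the trajectory reaches $l(a)$ only in the limit $t\to+\infty$), so $\partial_x\hat v(Y_t,a)$ is finite and strictly positive and we never touch the singular boundary where $\partial_x\hat v = +\infty$. The rest is a routine but careful chain-rule computation knitting together the IODE and the first-order condition.
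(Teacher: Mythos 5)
Your proposal is correct and follows essentially the same route as the paper: differentiate the stationary IODE \eqref{iode} in $x$, use the duality identity $\tilde U'(U'(y))=-y$ to eliminate $\tilde U'$ in favour of $\hat c$, and combine with the chain rule along the feedback trajectory $Y'=-\hat c(Y,a)$. Your reorganization (differentiating the first-order condition $U'(c_t)=\partial_x\hat v(Y_t,a)$ in $t$ rather than differentiating \eqref{chat} in $x$ and \eqref{cpgen} in $t$ separately) is the same computation, and your added care about $C^2$ regularity of $\hat v$ and the interior of the domain only makes explicit what the paper leaves implicit.
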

\begin{proof}
Differentiating equations \eqref{iode} and \eqref{chat} with
respect to $x$ and \eqref{cpgen} (restricted on $\X$) with respect
to $t$, we obtain
\begin{align*}
&\frac{\ud^2 Y_t}{\ud t^2 } = \frac{\partial \hat c(Y_t,a)}{\partial x} c_t,\\
&\frac{\partial \hat c(x,a)}{\partial x} = I'\left(\frac{\partial
\hat v(x,a)}{\partial x}\right)\frac{\partial^2 \hat
v(x,a)}{\partial x^2} = \frac{1}{U''(\hat
c(x,a))}\frac{\partial^2\hat v(x,a)}{\partial x^2},
\\
&(\rho+\lambda)\frac{\partial \hat v(x,a)}{\partial x}-\tilde
U'\left(\frac{\partial \hat v(x,a)}{\partial
x}\right)\frac{\partial^2\hat v(x,a)}{\partial x^2}-\frac{\partial
g}{\partial x} = 0.
\end{align*}
Using the equality $\tilde U'(U'(y)) = -y$, the last equation can
be rewritten in terms of $\hat c$:
$$
(\rho+\lambda)U'\left( \hat c(x,a)\right)+ \hat
c(x,a)\frac{\partial^2\hat v(x,a)}{\partial x^2}-\frac{\partial
g}{\partial x} = 0.
$$
Assembling all the pieces together, we obtain the final result
\eqref{auto}.
\end{proof}

\noindent The equation \eqref{auto} is a second-order ODE similar
to equations of theoretical mechanics (second Newton's law), and
it should be solved on the interval $[0,+\infty)$ with the
boundary conditions $Y_0 = x$ and $Y_{\infty} = l(a)$ (which
corresponds to resetting the time to zero after the last trading
date). Solving this equation does not require the auxiliary value
function $\hat v$ but only the original value function $v$, which,
in the case of power utility, can be found from the scaling
relation.

\paragraph{The case of power utility.} In the case of power utility
function $U(x)=K_1 x^\gamma$, the equation \eqref{auto} takes the
form
\begin{align}
\frac{\ud^2 Y_t}{\ud t^2} = \frac{\rho+\lambda}{1-\gamma}c_t -
\frac{1}{K_1 \gamma(1-\gamma)}c^{2-\gamma}_t g'(Y_t),\quad
Y_0=x,\quad Y_\infty = l(a). \label{bvp}
\end{align}
In this case, one can deduce a simple exponential lower bound on
the integrated consumption, corresponding to the solution of
\eqref{bvp} in the case $g\equiv 0$.
\begin{proposition}\label{expbound}
The process $Y$ solution of \eqref{bvp} satisfies
$$
Y_t \geq Y^0_t,
$$
where $Y^0$ is the solution of \eqref{bvp} with $g\equiv 0$, given
explicitly by
\begin{align}
Y^0_t = x - (x-l(a))(1-e^{-\frac{(\rho+\lambda)t}{1-\gamma}}).
\end{align}
\end{proposition}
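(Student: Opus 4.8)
The plan is to reduce the claim to a one-dimensional comparison argument for the linear operator obtained after subtracting the two boundary value problems. Writing $\beta := \frac{\rho+\lambda}{1-\gamma} > 0$, I would first recast \eqref{bvp}, using $c_t = -\frac{\ud Y_t}{\ud t}$, in the equivalent form
\begin{equation*}
\frac{\ud^2 Y_t}{\ud t^2} + \beta \frac{\ud Y_t}{\ud t} = -R_t, \qquad R_t := \frac{1}{K_1\gamma(1-\gamma)}\, c_t^{2-\gamma}\, g'(Y_t),
\end{equation*}
and observe that $Y^0$ solves the same equation with $R\equiv 0$ (since $g\equiv 0$ there). Both $Y$ and $Y^0$ carry the same two-point data $Y_0 = Y^0_0 = x$ and $Y_\infty = Y^0_\infty = l(a)$. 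The explicit formula for $Y^0$ is then recovered by integrating $Y^0{}'' + \beta Y^0{}' = 0$ with these boundary conditions, which yields $Y^0_t = l(a) + (x-l(a))e^{-\beta t}$, matching the claimed expression.

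The key sign input is $R_t \geq 0$. Indeed, $\gamma \in (0,1)$ gives $K_1\gamma(1-\gamma) > 0$ and $2-\gamma > 0$, so $c_t^{2-\gamma} \geq 0$ because consumption is nonnegative; and $g'(Y_t) = \frac{\partial g}{\partial x}(Y_t) \geq 0$ since $g$ is (strictly) increasing in $x$ by Lemma \ref{gconcave}(ii). Setting $w := Y - Y^0$ and subtracting the two equations gives the linear problem
\begin{equation*}
w'' + \beta w' = -R_t \leq 0, \qquad w(0) = 0, \quad \lim_{t\to\infty} w(t) = 0.
\end{equation*}

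I would then multiply by the integrating factor $e^{\beta t}$ to obtain $\big(e^{\beta t} w'\big)' = -e^{\beta t} R_t \leq 0$, so that $t \mapsto e^{\beta t} w'(t)$ is nonincreasing. Consequently $w'$ is nonnegative on an initial (possibly empty) interval and nonpositive thereafter; that is, $w$ is first nondecreasing and then nonincreasing on $[0,+\infty)$. A function of this shape attains its minimum over $[0,+\infty]$ at one of its two endpoints, and since $w(0) = \lim_{t\to\infty} w(t) = 0$, I conclude $w \geq 0$, i.e. $Y_t \geq Y^0_t$.

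The main obstacle is the handling of the boundary condition at $+\infty$: one must ensure that the ``first up, then down'' monotonicity forces $w \geq 0$ \emph{globally} rather than only locally, which is exactly where the two-sided data $w(0) = w(\infty) = 0$ is indispensable (a one-sided condition would not suffice). A secondary technical point is to justify that $Y$ is twice differentiable and that $g'(Y_t)$ is well defined along the trajectory; this follows from the regularity established in the preceding propositions, and from Remark \ref{gregularity} in the power-utility case.
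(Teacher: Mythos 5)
Your proof is correct, and it takes a mildly but genuinely different route from the paper's. The paper works at first order: it rewrites \eqref{bvp} as $\frac{\ud c_t}{\ud t}=-\frac{\rho+\lambda}{1-\gamma}c_t+f(t)$ with $f\geq 0$ (your $R_t$), applies Gronwall to get $c_t\geq c_s e^{-\frac{\rho+\lambda}{1-\gamma}(t-s)}$, integrates and lets $t\to\infty$ so that the terminal condition $Y_\infty=l(a)$ becomes the pointwise inequality $l(a)\leq Y_t-\frac{c_t(1-\gamma)}{\rho+\lambda}$ (with equality for $Y^0$), subtracts to obtain a first-order differential inequality for $Y^0-Y$, and applies Gronwall a second time from $Y^0_0=Y_0=x$. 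Your argument instead subtracts the two second-order problems and runs a comparison on $w=Y-Y^0$: the relation $(e^{\beta t}w')'=-e^{\beta t}R_t\leq 0$ is essentially the paper's first Gronwall step in disguise (applied to $c-c^0$ rather than to $c$ alone), but you then consume the two boundary conditions at once through the unimodality/endpoint-minimum argument instead of funneling the condition at infinity into a pointwise wealth--consumption relation. The net effect is that your proof is slightly shorter and makes the role of the two-sided data $w(0)=\lim_{t\to\infty}w(t)=0$ explicit, while the paper's version produces the reusable intermediate identity $l(a)=Y^0_t-\frac{c^0_t(1-\gamma)}{\rho+\lambda}$ and its inequality counterpart for $Y$. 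Both hinge on the same sign facts ($c_t\geq 0$ and $g'\geq 0$ from Lemma \ref{gconcave}) and on the twice differentiability of $Y$ along the optimal path, which you correctly flag.
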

\noindent The condition $g\equiv 0$ means that the value function
of the investor resets to zero (the investor dies) at a random
future time. In this case it is clear that a rational agent will
consume faster than in the case where more interesting investment
opportunities are available.  The typical shape of optimal
consumption policies is plotted in Figure \ref{cons.fig}.
\begin{proof}
The equation \eqref{bvp} can be rewritten as
$$
\frac{\ud c_t}{\ud t} = -\frac{\rho+\lambda}{1-\gamma}c_t + f(t),
\quad f(t)\geq 0.
$$
>From Gronwall's inequality we then find
\begin{align*}
c_t &\geq c_s e^{-\frac{\rho+\lambda}{1-\gamma}(t-s)}, \\
Y_t &\leq Y_s -
\frac{c_s(1-\gamma)}{\rho+\lambda}(1-e^{-\frac{\rho+\lambda}{1-\gamma}(t-s)}),\quad
\quad t\geq s.
\end{align*}
The terminal condition $Y_\infty = l(a)$ implies
$$
l(a)\leq Y_t - \frac{c_t(1-\gamma)}{\rho+\lambda}.
$$
On the other hand, the solution of the problem without investment
opportunities satisfies
$$
l(a) = Y^0_t - \frac{c^0_t(1-\gamma)}{\rho+\lambda}.
$$
Therefore,
$$
Y_t - \frac{c_t(1-\gamma)}{\rho+\lambda} \geq Y^0_t -
\frac{c^0_t(1-\gamma)}{\rho+\lambda}
$$
and
$$
\frac{\ud}{\ud t}(Y^0_t - Y_t) \leq -\frac{\rho+\lambda}{1-\gamma}
(Y^0_t - Y_t).
$$
Since $Y^0_0 = Y_0 = x$, another application of Gronwall's
inequality shows that $Y^0_t \leq Y_t$ for all $t$.
\end{proof}

\begin{figure}
\centerline{\includegraphics[width=0.55\textwidth]{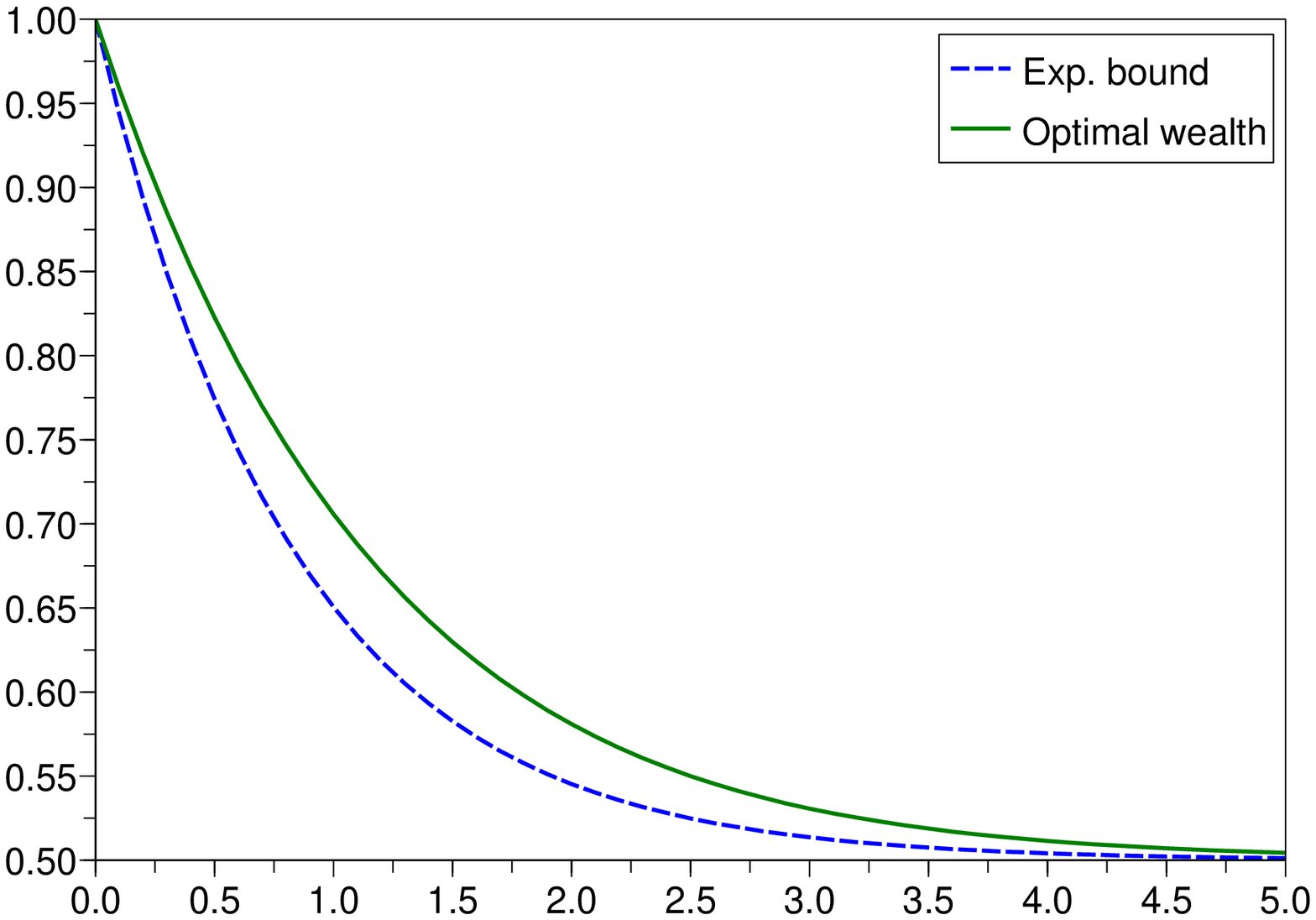}\includegraphics[width=0.55\textwidth]{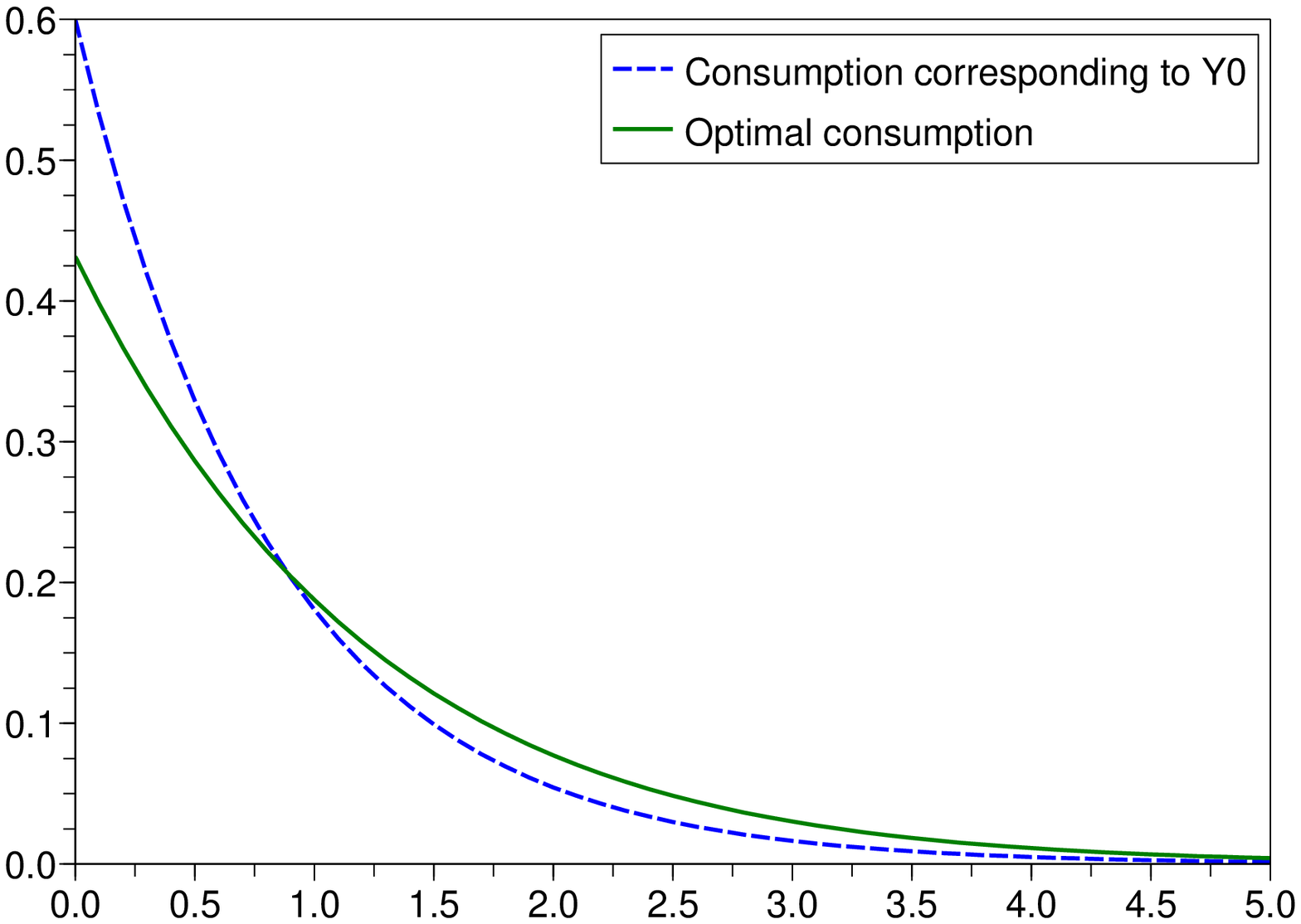}}
\caption{Left: typical profile of the optimal wealth process $Y_t$
and the exponential lower bound given by the proposition
\ref{expbound}. Right: the corresponding consumption strategies.
In the presence of investment opportunities, the agent first
consumes slowly but if the investment opportunity does not appear,
the agent eventually ``gets disappointed'' and starts to consume
fast.} \label{cons.fig}
\end{figure}


\subsubsection{The nonstationary case}

In this case the regularity results for the optimal strategies are
weaker and more difficult to prove.
\begin{proposition} \label{pr:regfacilenonstaz}
Let $a\in A$ and $(t,x) \in \R_+ \times [l(a),+\infty) $. Let
$(\bar c_\cdot, \bar Y_\cdot )$ be the optimal couple for the
auxiliary problem starting at $(t,x)$. If $x=l(a)$, then $\bar
c\equiv 0$, so  $\bar Y \equiv l(a)$. If $x>l(a)$ then $\bar c$ is
continuous, strictly positive and $\lim_{t \to +\infty} \bar
c_t=0$.
\end{proposition}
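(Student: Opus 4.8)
The plan is to read off the optimal couple from Proposition \ref{genuniqueness}, combine the feedback identity \eqref{gencbar}--\eqref{genchat} with the boundary blow-up \eqref{xder} of $\partial_x\hat v$, and package the whole argument into a single monotone quantity built from the costate $p_s:=\partial_x\hat v(s,\bar Y_s,a)$. The degenerate case $x=l(a)$ is immediate: any $c\in\C_a(t,x)$ satisfies $\int_t^s c_u\,\ud u\le x-l(a)=0$, hence $c\equiv0$ a.e., so by uniqueness in Proposition \ref{genuniqueness} the optimal couple is $\bar c\equiv0$, $\bar Y\equiv l(a)$. Assume now $x>l(a)$. By \eqref{genchat}--\eqref{gencbar} we have $\bar c_s=I(p_s)$ with $U'(\bar c_s)=p_s$, and $\bar Y$ solves \eqref{cpgen}; thus $\bar Y'=-\bar c\le0$, so $\bar Y$ is nonincreasing, bounded below by $l(a)$, and $\bar Y_s\downarrow Y_\infty\ge l(a)$ with $\int_t^{+\infty}\bar c_s\,\ud s=x-Y_\infty\le x-l(a)<+\infty$.

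Continuity of $\bar c$ reduces to interiority, i.e. to showing $\bar Y_s>l(a)$ for every finite $s$: on $(0,+\infty)\times(l(a),+\infty)$ the map $\partial_x\hat v$ is continuous (Proposition \ref{smoothness}), $I$ is continuous, and $\bar Y$ is continuous as a solution of \eqref{cpgen}, so $\bar c_s=I(\partial_x\hat v(s,\bar Y_s,a))$ is continuous as long as the trajectory stays in the interior. The core is therefore a costate estimate. Differentiating the IPDE \eqref{ipde} formally in $x$ and using $\tilde U'(p)=-I(p)=-\bar c$, the mixed and second $x$-derivatives of $\hat v$ cancel along $\bar Y$, yielding the adjoint equation $\dot p_s=(\rho+\lambda)p_s-\partial_x g(s,\bar Y_s,a)$. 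Since $g$ is strictly increasing in $x$ (Lemma \ref{gconcave}) and $v$ is strictly increasing (Proposition \ref{vstrincr}), one has $\partial_x g\ge0$, whence $\tfrac{\ud}{\ud s}\big(e^{-(\rho+\lambda)s}p_s\big)=-e^{-(\rho+\lambda)s}\partial_x g(s,\bar Y_s,a)\le0$, so $s\mapsto e^{-(\rho+\lambda)s}p_s$ is nonincreasing.

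This single monotone quantity delivers both remaining claims. For strict positivity: if $\bar c_{s_0}=0$ at some finite $s_0>t$, then $\bar Y_{s_0}=l(a)$ and, by \eqref{xder}, $p_{s_0}=+\infty$; but monotonicity forces $e^{-(\rho+\lambda)s_0}p_{s_0}\le e^{-(\rho+\lambda)t}\partial_x\hat v(t,x,a)<+\infty$ (finite because $x>l(a)$), a contradiction, so $\bar c_s>0$ for all finite $s$ and continuity follows. For the limit: $e^{-(\rho+\lambda)s}p_s$ decreases to some $L\ge0$; if $L>0$ then $p_s\to+\infty$ and $\bar c_s=I(p_s)\to0$. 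If instead $\bar c_s\not\to0$, choose $\delta>0$ and $s_n\uparrow+\infty$ with $\bar c_{s_n}\ge\delta$, i.e. $p_{s_n}\le U'(\delta)=:M$; monotonicity then gives $p_s\le M e^{(\rho+\lambda)(s-s_n)}$ for $s\ge s_n$, so $\bar c_s\ge I\!\left(Me^{(\rho+\lambda)h}\right)>0$ on each interval $[s_n,s_n+h]$, and passing to a subsequence with disjoint intervals forces $\int_t^{+\infty}\bar c_s\,\ud s=+\infty$, contradicting integrability; hence $\bar c_s\to0$.

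The main obstacle is the rigorous justification of the adjoint dynamics: the differentiation of \eqref{ipde} above presupposes second-order regularity of $\hat v$ in $x$ (and in $(t,x)$), which is not granted by the bare $C^1$ conclusion of Proposition \ref{smoothness}. I would therefore either derive the monotonicity of $e^{-(\rho+\lambda)s}p_s$ directly from the viscosity sub/supersolution inequalities of Definition \ref{viscosity} together with the semiconcavity of $\hat v$, so as to never invoke $\partial_{xx}\hat v$ or $\partial_{tx}\hat v$, or defer the differentiability of the costate along the optimal path to the forthcoming Proposition \ref{pr:regdiff}, exactly as the stationary statement did. Everything else --- the degenerate case, the reduction of continuity to interiority, and the limit --- is soft once this monotone quantity is established.
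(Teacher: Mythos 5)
Your proposal is correct, and its skeleton coincides with the paper's: the degenerate case is immediate from $\C_a(t,l(a))=\{0\}$; continuity comes from the feedback form $\bar c_s=I(\partial_x\hat v(s,\bar Y_s,a))$ (note that it does not actually require interiority, since $\hat c(\cdot,\cdot,a)$ extends continuously by $0$ across $x=l(a)$, as used in the proof of Proposition \ref{genuniqueness}); and strict positivity is obtained from the costate, exactly as the paper does by pointing to the maximum-principle costate of Proposition \ref{pr:regdiff}, which is finite and $C^1$ while $U'(0^+)=+\infty$. You are also right that the adjoint equation cannot be obtained by differentiating \eqref{ipde} in $x$ for lack of second-order regularity, and your fallback --- invoke the Seierstad--Sydsaeter maximum principle after continuity of $\bar c$ is secured from the feedback form --- is precisely the paper's route, with no circularity since continuity is established first.

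Where you genuinely add something is the limit $\bar c_s\to 0$. The paper's proof is literally ``the same as in the stationary case,'' but the stationary argument deduces the limit from integrability \emph{plus monotonicity} of $\bar c$, and the paper itself remarks immediately afterwards that monotonicity of the optimal consumption is not available in the nonstationary setting; integrability alone does not force a nonnegative function to vanish at infinity. Your monotone quantity $e^{-(\rho+\lambda)s}p_s$, whose decrease follows from $\partial_x g\ge 0$ in the adjoint equation, gives the uniform lower bound $\bar c\ge I\left(Me^{(\rho+\lambda)h}\right)$ on intervals of fixed length following any time where $\bar c\ge\delta$, and the resulting contradiction with $\int_t^{\infty}\bar c_s\,\ud s\le x-l(a)$ closes exactly the gap the paper leaves open. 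This is a genuine repair rather than a mere variant, and it costs nothing beyond the costate equation the paper already uses for strict positivity.
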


\begin{proof}
The proof is the same as in the stationary case.
\end{proof}

\noindent Note that, with respect to the stationary case here we
do not have monotonicity of the optimal consumption since the
behavior of $\hat v$ in the time variable is not known.

\noindent Moreover here the limiting property for $\bar Y$ is
proved only under the assumption of twice continuous
differentiability of $U$, as given below.

\noindent As in the stationary case we can deduce an autonomous
equation for the optimal wealth process between two trading dates.
However, since we have weaker regularity results the proof is
different and makes use of the maximum principle.
\begin{proposition}\label{pr:regdiff}
Suppose that $U\in C^2((0,\infty))$ with $U''(x)<0$ for all $x$.
Then the optimal wealth process $Y_s$ between two trading dates is
twice differentiable, it satisfies the second-order ODE
\begin{equation} \label{nonauto}
\frac{\ud^2 Y_s}{\ud s^2} = \frac{\frac{\partial
g(s,Y_s)}{\partial x} - (\rho+\lambda)U'(c_s)}{U''(c_s)},\quad c_s
= -\frac{\ud Y_s}{\ud s}, \quad Y_t=x
\end{equation}
and $\lim_{t \to +\infty} \bar Y_t=l(a)$.
\end{proposition}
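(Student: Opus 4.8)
The plan is to treat the auxiliary problem \eqref{vhat1} as a deterministic infinite-horizon optimal control problem with state $Y$, dynamics $Y'_s=-c_s$, control constraint $c_s\ge 0$ and state constraint $Y_s\ge l(a)$, and to extract the optimality conditions by a variational (Pontryagin-type) argument rather than by differentiating the feedback map. The obstruction, and the reason the stationary proof cannot be copied, is that in the nonstationary case $\hat v$ is only known to be $C^1$, so $\frac{\partial \hat v}{\partial x}(s,\bar Y_s,a)$ cannot be differentiated in $s$ to produce the costate dynamics directly. The maximum principle bypasses this missing second-order regularity by furnishing the costate with an integral representation whose smoothness can then be read off and bootstrapped.

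Concretely, I would first record that, by Proposition \ref{pr:regfacilenonstaz}, when $x>l(a)$ the optimal control satisfies $\bar c_s>0$ and the trajectory satisfies $\bar Y_s>l(a)$ on every finite interval, so both constraints are inactive in the interior and two-sided admissible perturbations exist. Perturbing the optimal control by $c^\varepsilon_s=\bar c_s+\varepsilon\beta_s$, with $\beta$ bounded and compactly supported in time, the induced state is $Y^\varepsilon_s=\bar Y_s-\varepsilon\int_t^s\beta_u\,\ud u$, which stays admissible for $|\varepsilon|$ small. Setting to zero the derivative at $\varepsilon=0$ of the objective in \eqref{vhat1} evaluated along $c^\varepsilon$, using that $U$ and $g$ are $C^1$, and interchanging the order of integration, the first-order condition reads, for all admissible $\beta$,
\[
\int_t^{+\infty}\beta_u\left[e^{-(\rho+\lambda)(u-t)}U'(\bar c_u)-\int_u^{+\infty}e^{-(\rho+\lambda)(s-t)}\frac{\partial g}{\partial x}(s,\bar Y_s,a)\,\ud s\right]\ud u=0.
\]
Since the bracket is continuous in $u$ (because $\bar c$ is continuous), the fundamental lemma of the calculus of variations forces it to vanish identically, whence, after multiplying by $e^{(\rho+\lambda)(u-t)}$,
\[
U'(\bar c_u)=\int_u^{+\infty}e^{-(\rho+\lambda)(s-u)}\frac{\partial g}{\partial x}(s,\bar Y_s,a)\,\ud s.
\]
Convergence of this integral is guaranteed by the growth control of Assumption \ref{H}(c) together with the discount condition in Assumption \ref{h2}, exactly as in the estimate \eqref{hatvgrowth}.

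The regularity then follows by a bootstrap on this identity. Setting $P_u:=U'(\bar c_u)$, its right-hand side is the integral of a continuous integrand against $e^{-(\rho+\lambda)(s-u)}$, so the Leibniz rule gives $P\in C^1$ with $P'_u=(\rho+\lambda)P_u-\frac{\partial g}{\partial x}(u,\bar Y_u,a)$. Since $U\in C^2$ with $U''<0$, the map $U'$ is a $C^1$ diffeomorphism onto its range, so $\bar c_u=(U')^{-1}(P_u)=I(P_u)$ is $C^1$; consequently $\bar Y'_u=-\bar c_u$ is $C^1$ and $\bar Y\in C^2$. Differentiating $P_u=U'(\bar c_u)=U'(-\bar Y'_u)$ gives $P'_u=-U''(\bar c_u)\,\bar Y''_u$, and substituting the adjoint equation yields precisely \eqref{nonauto}.

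For the limit I would argue by contradiction. The trajectory $\bar Y$ is nonincreasing and bounded below by $l(a)$, hence converges to some $x_1\ge l(a)$; suppose $x_1>l(a)$. By Proposition \ref{pr:regfacilenonstaz} we have $\bar c_s\to 0$, so by the Inada condition $P_s=U'(\bar c_s)\to+\infty$. On the other hand, since $\bar Y_s\downarrow x_1>l(a)$, along the tail $\bar Y_s+az$ stays uniformly bounded away from $0$, so $\frac{\partial g}{\partial x}(s,\bar Y_s,a)=\lambda\int v'(\bar Y_s+az)p(s,\ud z)$ is bounded by a constant (using that $v'$ is nonincreasing and $p(s,\cdot)$ is a probability measure); the integral representation then forces $P_s\le C/(\rho+\lambda)$, contradicting $P_s\to+\infty$. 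Hence $x_1=l(a)$. The same blow-up of the costate rules out $\bar c$ vanishing in finite time, which is the fact invoked in the stationary case. I expect the main difficulty to be the rigorous justification of the variational step on the infinite horizon: ensuring the perturbation remains admissible, that differentiation under the integral and the Fubini interchange are licit, and that the integrability and boundedness estimates (all resting on Assumptions \ref{H} and \ref{h2}) hold uniformly enough to pass to the pointwise Euler equation and to the limit.
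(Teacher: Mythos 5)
Your proof follows the same skeleton as the paper's: both arguments are maximum-principle arguments that produce the costate equation $p'(s)=(\rho+\lambda)p(s)-\frac{\partial g}{\partial x}(s,\bar Y_s,a)$ together with $U'(\bar c_s)=p(s)$, bootstrap $\bar c=I(p)\in C^1$ and hence $\bar Y\in C^2$, and obtain $\lim_s\bar Y_s=l(a)$ by the identical contradiction (boundedness of $\partial g/\partial x$ along the tail versus the Inada blow-up of the costate). The difference is in how the necessary conditions are justified: the paper converts the state constraint $Y_s\ge l(a)$ into the endpoint constraint $\lim_{s\to\infty}Y_s\ge l(a)$ and cites a packaged maximum principle (Seierstad--Sydsaeter, Theorem 12, p.~234), whose transversality condition $\lim_{T}e^{(\rho+\lambda)(s-T)}p(T)=0$ is exactly what selects your integral representation of $U'(\bar c_u)$; you instead rederive the Euler equation by a direct variational perturbation. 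That self-contained route is attractive, but it has one genuine soft spot, which you partly anticipate: since in fact $\bar Y_s\downarrow l(a)$, a two-sided perturbation with $\int_t^\infty\beta_u\,\ud u\neq 0$ is \emph{not} admissible for both signs of $\varepsilon$ (it would push $Y^\varepsilon_\infty$ below $l(a)$), so the fundamental lemma applied to the unrestricted class of $\beta$ is not available. Restricting to $\beta$ with $\int_t^\infty\beta_u\,\ud u=0$ only yields that your bracket equals a constant $\kappa$, i.e.
\begin{equation*}
U'(\bar c_u)=\kappa\,e^{(\rho+\lambda)(u-t)}+\int_u^{+\infty}e^{-(\rho+\lambda)(s-u)}\frac{\partial g}{\partial x}(s,\bar Y_s,a)\,\ud s .
\end{equation*}
This is harmless for the ODE \eqref{nonauto}, since the extra term is a homogeneous solution of the costate equation and differentiates away; but your uniform bound on $P_s$ in the limit argument uses $\kappa=0$. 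The fix is either to import the transversality condition (as the paper does), or to observe that under the contradiction hypothesis $x_1=\lim_s\bar Y_s>l(a)$ there is uniform slack $\bar Y_s-l(a)\ge x_1-l(a)>0$, so two-sided perturbations with $\int\beta\neq 0$ \emph{are} admissible for small $|\varepsilon|$, forcing $\kappa=0$ and restoring your bound. With that amendment the argument is complete and equivalent to the paper's.
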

\begin{proof}
We cannot differentiate equations \eqref{ipde} and \eqref{genchat}
with respect to $x$ as in the stationary case as we do not know if
$\hat v$ is $C^2$. Then we follow a different approach. We use the
maximum principle contained in Theorem 12 p 234 of~\cite{SS}. Such
theorem concerns problems with endpoint constraints but without
state constraints. Due to the positivity of the consumption, our
auxiliary problem \eqref{vhat} can be easily rephrased
substituting the state constraint $Y_s \ge l(a), \forall s\ge t$
with the endpoint constraint $\lim_{s\to +\infty}Y_s \ge l(a)$. So
we can apply the above quoted theorem that, applied to our case,
states the following:

\medskip

\noindent {\em Assume that $g(\cdot, \cdot)$ and $\frac{\partial
g(\cdot, \cdot)}{\partial x}$ are continuous. Given an optimal
couple $(\bar Y_\cdot , \bar c_\cdot )$  with $\bar c$ continuous
there exists a function $p(\cdot)\in C^1(t,+\infty; \R)$ such
that:
\begin{itemize}
\item $p(\cdot)$ is a solution of the equation
$$
p'(s)= (\rho +\lambda) p(s)- \frac{\partial g(s,\bar
Y_s)}{\partial x};
$$
\item $U'(\bar c_s)=p(s) \; \leftrightarrow \;\bar c_s =I(p(s))$ for every $s\ge
t$; \item $\lim_{T \to + \infty} e^{(\rho + \lambda)(s-T)}p(T)=0$,
for every $t\le s \le T$ (transversality condition).
\end{itemize}
}

\noindent Since we already know (from Proposition
\ref{genuniqueness}) that there exists a unique optimal couple
$(\bar Y_\cdot , \bar c_\cdot )$ and that $\bar c$ is continuous
(see of Proposition \ref{pr:regfacilenonstaz}) the above
statements apply. Then we get that $\bar c_s>0 $ for every $s\ge
t$, that $\bar c$ is everywhere differentiable and that $$
\frac{\ud \bar c_s}{\ud s}=I'(p(s))p'(s)=\frac{1}{U''(\bar
c_s)}\left[(\rho +\lambda)U'(\bar c_s) -  \frac{\partial g(s,\bar
Y_s)}{\partial x} \right]$$ which gives the claim recalling that
$\bar c_s = -\ds \frac{\ud \bar Y_s}{\ud s}$.

\noindent Concerning the limiting property of $\bar Y$ we argue by
contradiction. Let $\lim_{s \to + \infty} \bar Y_s=x_1>l(a)$. We
have then, by the definition of $g$, for every $s\ge t$,
$$
\frac{\partial g(s,\bar Y_s)}{\partial x}\le \frac{\partial
g(s,x_1)}{\partial x} \le \lambda v'(x_1-l(a)) <+\infty.
$$
Then from the costate equation we get that, for $t\le s \le T
<+\infty$
$$
p(s)\le e^{(\rho + \lambda)(s-T)}p(T)+ \int_s^T e^{(\rho +
\lambda)(r-T)}\lambda
 v'(x_1-l(a)) \ud r
$$
$$
\le  e^{(\rho + \lambda)(s-T)}p(T)+ \frac{\lambda}{\rho + \lambda}
 v'(x_1-l(a)) (1-e^{-(\rho + \lambda)T})
$$
Using that $\lim_{T \to + \infty} e^{(\rho + \lambda)(s-T)}p(T)=0$
we get a uniform bound for $p(s)$. This is a contradiction as
$\lim_{s \to + \infty} p(s)=\lim_{s \to + \infty} U'(c_s)=+\infty
$.
\end{proof}

\noindent The equation \eqref{nonauto} is a second-order ODE
similar to equations of theoretical mechanics (second Newton's
law), and it should be solved on the interval $[0,+\infty)$ with
the boundary conditions $Y_0 = x$ and $Y_{\infty} = l(a)$ (which
corresponds to resetting the time to zero after the last trading
date). Solving this equation does not require the auxiliary value
function $\hat v$ but only the original value function $v$, which,
in the case of power utility, can be found from the scaling
relation.

\begin{remark}\label{rm:MPwithoutregularity}
The Maximum Principle used in the above proof holds once we know
that $g(\cdot, \cdot)$ and $\frac{\partial g(\cdot,
\cdot)}{\partial x}$ are continuous. As observed in Remark
\ref{gregularity}, this is true also in cases when the
semiconcavity assumption \ref{h5} may fail (notably in the case of
power utility and in the case of `regular' density). So, also in
such cases the Maximum Principle could be used to get information
about the optimal strategies. Clearly, without knowing the
regularity of the value function $\hat v$ such information would
be much less satisfactory.
\end{remark}

\paragraph{The case of power utility.} In the case of power utility
function, the equation \eqref{nonauto} can again be simplified:
\begin{equation*}
\frac{\ud^2 Y_t}{\ud t^2} = \frac{\rho+\lambda}{1-\gamma}c_t -
\frac{\lambda \vartheta_1 c^{2-\gamma}_t}{K_1 (1-\gamma)} \int
(Y_t + az)^{\gamma-1}p(t,\ud z),\quad Y_0=x,\quad Y_\infty = l(a).
\end{equation*}
Because the second term in the right-hand side is still positive,
the exponential bound of Proposition \ref{expbound} can be
established in exactly the same way as in the stationary case.
Figure \ref{bscons} depicts the optimal wealth process and the
optimal consumption policy for the probability distribution
$p(t,\ud z)$ extracted from the Black-Scholes model with the same
parameter values as in~\cite{pt1}: drift $b=0.4$, volatility
$\sigma=1$, discount factor $\rho=0.2$, intensity $\lambda = 2$
and risk aversion coefficient $\gamma = 0.5$. We see that at least
qualitatively, the consumption profile is similar to the one
observed in the stationary model, with exponential decay. For
comparison, we also plot the wealth and consumption policy for the
stationary model with distribution corresponding to the
Black-Scholes model in $3$ years' time. In this case the agent
consumes at a slower rate than in the nonstationary model. The
explanation is that for the parameter values we chose, 3 years is
a very long time horizon, because all the consumption happens,
essentially, during the first 2 years after trading. During this
period (first 2 years) the stationary model offers better
investment opportunities, which explains the slower consumption
rate.

\begin{figure}
\centerline{\includegraphics[width=0.6\textwidth]{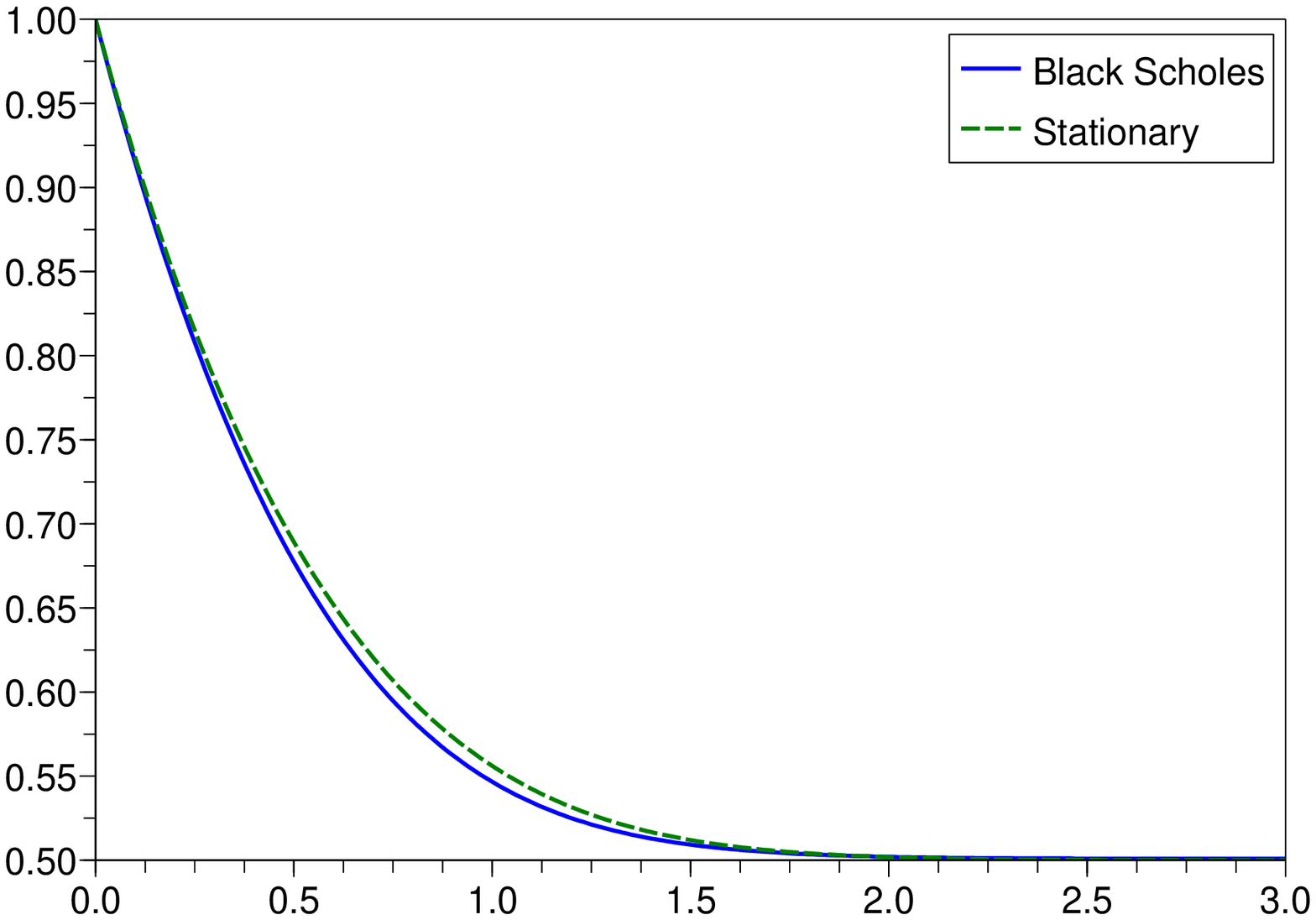}\hspace*{-1cm}\includegraphics[width=0.6\textwidth]{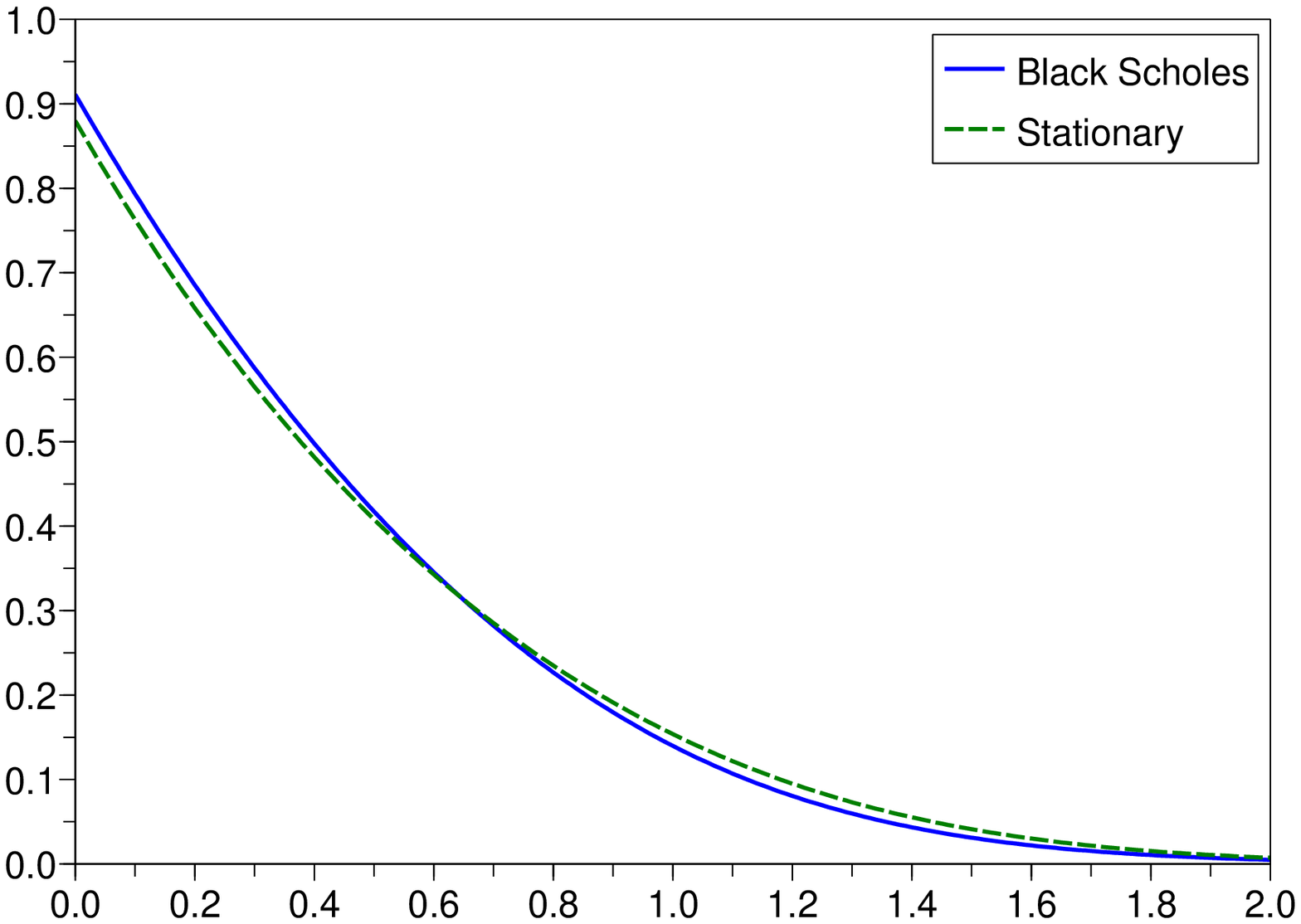}}
\caption{Optimal wealth (left) and consumption policy (right) for
the probability distribution extracted from the Black-Scholes
model (solid line) and from the stationary model having the same
distribution as the Black-Scholes model in $3$ years' time (dashed
line).} \label{bscons}
\end{figure}


\appendix

\section{Appendix~: Technical proofs}

\paragraph{Proof of Proposition \ref{vstrincr}.}
We suppose by contradiction that $v$ is not strictly increasing on
$\R_+$ This means that it is definitely constant on $\R_+$ from a
certain $x$ on, since $v$ is concave. Then we fix $\bar x \in
\R_+$ such that $v(x)=B \in \R_+$, for all $x \geq \bar x$. Take
$\epsilon >0$ and a pair $(\alpha^\epsilon,c^\epsilon)$
$\epsilon$-optimal at $\bar x$. This means that
$(\alpha^\epsilon,c^\epsilon) \in \A(\bar x)$, i.e.
$$
X_k^{\bar x}=\bar x-\int_0^{\tau_k}c^\epsilon_t\ud
t+\sum_{i=1}^k\alpha^\epsilon_iZ_i\geq 0, \quad \forall k \geq
1,\quad X_0^{\bar x}=\bar x,
$$
and
$$
B=v(\bar x)<\esp{\int_0^{+\infty}e^{-\rho t}U(c_t^\epsilon)\ud
t}+\epsilon.
$$
Now we choose $\tilde x>\bar x+1$. Then we have $v(\tilde
x)=v(\bar x)= B$. We consider the control policy
$(\alpha^\epsilon,\tilde c)$, where $\tilde
c_t=c_t^\epsilon+\mathbb I_{[0,1]}(t)$, for all $t \geq 0$. Hence
given $\tilde x>0$, we have for every $k \geq 1$,
\begin{align*}
X_k^{\tilde x}& =\tilde x-\int_0^{\tau_k}\tilde c_t\ud
t+\sum_{i=1}^k\alpha^\epsilon_iZ_i=\tilde
x-\int_0^{\tau_k}c_t^\epsilon\ud
t-(1\wedge \tau_k)+\sum_{i=1}^k\alpha^\epsilon_iZ_i\\
& >\bar x-\int_0^{\tau_k}c_t^\epsilon\ud
t+\sum_{i=1}^k\alpha^\epsilon_iZ_i \geq 0,
\end{align*}
with $X_0^{\tilde x}=\tilde x$, so $(\alpha^\epsilon,\tilde c) \in
\A(\tilde x)$. Moreover we have:
\begin{align*}
v(\tilde x) \geq \mathbb E\bigg[\int_0^{+\infty}&e^{-\rho
t}U(\tilde c_t)\ud t\bigg]=\esp{\int_0^1e^{-\rho
t}U(c_t^\epsilon+1)\ud
t}+\esp{\int_1^{+\infty}e^{-\rho t}U(c_t^\epsilon)\ud t}\\
&>\esp{\int_0^1e^{-\rho t}U(c_t^\epsilon)\ud
t}+\esp{\int_1^{+\infty}e^{-\rho t}U(c_t^\epsilon)\ud t}=v(\bar
x)=B,
\end{align*}
since $U$ is strictly increasing. But this is not possible, since
we have assumed $v$ constant from $\bar x$ on. Hence the statement
is proved.
\begin{flushright}
$\square$
\end{flushright}

\paragraph{Proof of Proposition \ref{gconcave}.}
\begin{enumerate}
\item [(i)] The continuity comes from condition d) of Assumption \ref{H}. If d) does not
hold, measurability follows from condition b) of Assumption
\ref{H}.
\item [(ii)] The function $g$ is strictly increasing in $x \in
[l(a),+\infty)$ since $v$ is strictly increasing by Proposition
\ref{vstrincr}.
\item [(iii)] This property is a
direct consequence of concavity of $v$. Indeed, given $t \geq 0$,
consider $(x_\eta,a_\eta)=(\eta x_1+(1-\eta)x_2,\eta
a_1+(1-\eta)a_2)$, with $\eta \in (0,1)$, $x_1 \geq l(a_1),x_2
\geq l(a_2)$. First of all, $x_\eta \geq l(a_\eta)$ thanks to the
convexity of the function $l$. Since $v$ is concave, we have for
every $t \geq 0$:
\begin{align*}
g(t,x_\eta,a_\eta)&=\lambda\int v\left(\eta x_1+(1-\eta)x_2+\eta
a_1z+(1-\eta)a_2z\right)p(t,\ud z)\\
&\geq \lambda \eta\int v\left(x_1+a_1z\right)p(t,\ud
z)+\lambda(1-\eta)\int v\left(x_2+a_2z\right)p(t,\ud z)\\
&=\lambda \eta g(t,x_1,a_1)+\lambda (1-\eta)g(t,x_2,a_2).
\end{align*}
\noindent This provides the result.
\end{enumerate}
\begin{flushright}
$\square$
\end{flushright}

\paragraph{Proof of Proposition \ref{genuniqueness}.} In order to prove Proposition \ref{genuniqueness},
we need the following preliminary result:
\begin{lemma} \label{identityrel}
Let $\hat v$ be the value function given in \eqref{vhat}. Fix $a
\in A$. Assume the followings:
\begin{itemize}
\item[(i)] $\hat v(\cdot,\cdot,a) \in
C^1\left(\R_+\times(l(a),+\infty)\right)$;
\item[(ii)] $\ds \frac{\partial \hat v(t,l(a)^+,a)}{\partial x}=+\infty$, for
every $t \in \R_+$;
\item[(iii)] $\hat v$ is a classical solution of the HJ
equation \eqref{ipde} satisfying the growth condition
\eqref{hatvgrowth} with representation \eqref{generalboundary} on
the boundary.
\end{itemize}
Given $x \in [l(a),+\infty)$ and $t \ge 0$, for every couple
$(c,Y)$ admissible at $(t,x)$ for $s\ge t$, we have the following
identity: for $T>t$
\begin{equation}\label{identity}
\begin{split}
& e^{-(\rho+\lambda)T}\hat
v\left(T,Y_T,a\right)-e^{-(\rho+\lambda)t}\hat
v(t,x,a)=-\int_t^Te^{-(\rho+\lambda)s}\left[U(c_s)+g(s,Y_s,a)\right]\ud
s\\
& \quad \quad+\int_t^Te^{-(\rho+\lambda)s}\bigg[U(c_s)
-c_s\frac{\partial \hat v(s,Y_s,a)}{\partial x}-\tilde
U\left(\frac{\partial \hat v(s,Y_s,a)}{\partial x}\right)\bigg]\ud
s,
\end{split}
\end{equation}
with the agreement that
$$\frac{\partial \hat v(t,l(a),a)}{\partial
x}=\frac{\partial \hat v(t,l(a)^+,a)}{\partial x}=+\infty,\
\mbox{so\ that}\ \tilde U\left(\frac{\partial \hat
v(s,l(a),a)}{\partial x}\right)=0.
$$
If $T$ goes to $+\infty$
\begin{equation}\label{identityinf}
\begin{split}
&\hat
v(t,x,a)=\int_t^{+\infty}e^{-(\rho+\lambda)(s-t)}\bigg[U(c_s)+g(s,Y_s,a)\bigg]\ud s\\
&\quad -
\int_t^{+\infty}e^{-(\rho+\lambda)(s-t)}\bigg[U(c_s)-c_s\frac{\partial
\hat v(s,Y_s,a)}{\partial x}-\tilde U\left(\frac{\partial \hat
v(s,Y_s,a)}{\partial x}\right)\bigg]\ud s.
\end{split}
\end{equation}
Furthermore, an admissible couple $(c,Y)$ is optimal  at $(t,x)$
if and only if
\begin{equation*}
\tilde U\bigg(\frac{\partial \hat v(s,Y_s,a)}{\partial
x}\bigg)=U(c_s)- c_s\frac{\partial \hat v(s,Y_s,a)}{\partial
x},\quad {\rm for\ a.e.}\ s \ge t
\end{equation*}
such that $Y_s > l(a)$ and $c_s=0$ otherwise.
\end{lemma}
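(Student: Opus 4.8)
The plan is to prove the three claims in sequence, with the fundamental theorem of calculus applied to $s \mapsto e^{-(\rho+\lambda)s}\hat v(s,Y_s,a)$ as the common engine. First I would record that along any admissible couple $(c,Y)$ the state $Y_s = x - \int_t^s c_u\,\ud u$ is absolutely continuous, non-increasing, satisfies $Y_s' = -c_s$ for a.e. $s$, and stays $\ge l(a)$; since $Y$ is non-increasing the set $\{s \ge t : Y_s > l(a)\}$ is an interval $[t,\sigma)$ with $\sigma = \inf\{s : Y_s = l(a)\}\in(t,+\infty]$, and admissibility forces $c_s = 0$, $Y_s \equiv l(a)$ for $s \ge \sigma$. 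On $[t,\sigma)$, where $\hat v(\cdot,\cdot,a)$ is $C^1$ by hypothesis (i), the composition is absolutely continuous with $\frac{\ud}{\ud s}\big[e^{-(\rho+\lambda)s}\hat v(s,Y_s,a)\big] = e^{-(\rho+\lambda)s}\big[-(\rho+\lambda)\hat v + \partial_t\hat v - c_s\,\partial_x\hat v\big]$ a.e.; substituting the HJ equation \eqref{ipde} in the form $-(\rho+\lambda)\hat v + \partial_t\hat v = -\tilde U(\partial_x\hat v) - g$ and integrating, then adding and subtracting $U(c_s)$ inside the integral, reproduces \eqref{identity} on the interior part.

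Next I would treat the boundary part $[\sigma,T]$ (when $T\ge\sigma$) by hand using the boundary representation \eqref{generalboundary}: there $c_s=0$, so $U(c_s)=0$, and with the stated conventions $\partial_x\hat v=+\infty$, $\tilde U(+\infty)=0$, $c_s\,\partial_x\hat v=0$ both right-hand brackets of \eqref{identity} reduce to $-g(s,l(a),a)$, while the left-hand difference equals $-\int_\sigma^T e^{-(\rho+\lambda)s}g(s,l(a),a)\,\ud s$ directly from \eqref{generalboundary}; the two portions match at $\sigma$ by continuity of $\hat v$, giving \eqref{identity} for every $T>t$. To obtain \eqref{identityinf} I would let $T\to+\infty$ and kill the terminal term: by \eqref{hatvgrowth} and $Y_T\le x$ one has $e^{-(\rho+\lambda)T}\hat v(T,Y_T,a)\le Kx^\gamma e^{-(\rho+\lambda-b\gamma)T}$, and the standing condition of Assumption \ref{h2} gives $\rho+\lambda>b\gamma+\lambda k^\gamma/\underline z^\gamma\ge b\gamma$, so the exponent is strictly negative and the term vanishes; dividing \eqref{identity} by $e^{-(\rho+\lambda)t}$ and passing to the limit yields \eqref{identityinf}.

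For the optimality characterization I would read \eqref{identityinf} as the pointwise identity $\hat v(t,x,a)=J(t,x,a;c)-R(c)$ valid for every admissible $c$, where $J(t,x,a;c):=\int_t^{+\infty}e^{-(\rho+\lambda)(s-t)}[U(c_s)+g(s,Y_s,a)]\,\ud s$ is exactly the functional maximized in \eqref{vhat} and $R(c):=\int_t^{+\infty}e^{-(\rho+\lambda)(s-t)}[U(c_s)-c_s\,\partial_x\hat v-\tilde U(\partial_x\hat v)]\,\ud s$. Since $\tilde U(p)=\sup_{c>0}[U(c)-cp]\ge U(c)-cp$, the integrand of $R$ is everywhere $\le 0$, so $\hat v\ge J$ for all admissible $c$ (recovering $\hat v=\sup_c J$), with equality precisely when that integrand vanishes a.e. Because a couple is optimal iff $J=\hat v$, i.e. iff $R(c)=0$, optimality is equivalent to $\tilde U(\partial_x\hat v(s,Y_s,a))=U(c_s)-c_s\,\partial_x\hat v(s,Y_s,a)$ for a.e. $s$ with $Y_s>l(a)$; on $\{Y_s=l(a)\}$ admissibility already imposes $c_s=0$, which is consistent with $I(+\infty)=0$ (Inada condition $U'(0^+)=+\infty$) and with the vanishing of the $R$-integrand under the stated conventions. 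The main obstacle I anticipate is not the algebra but the careful treatment of the boundary $Y_s=l(a)$, where $\partial_x\hat v$ blows up: one must justify splitting the trajectory at $\sigma$ and check that the conventions $\tilde U(+\infty)=0$ and $0\cdot\infty=0$ render both the differentiation step and the Fenchel equality consistent across the boundary.
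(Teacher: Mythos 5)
Your argument is correct and follows essentially the same route as the paper's proof: differentiate $e^{-(\rho+\lambda)s}\hat v(s,Y_s,a)$ along the trajectory, substitute the HJ equation \eqref{ipde}, add and subtract $U(c_s)$, kill the terminal term via the growth condition \eqref{hatvgrowth}, and read off the optimality characterization from the sign of the Fenchel gap $U(c)-cp-\tilde U(p)\le 0$. The only cosmetic difference is at the hitting time of the boundary: you verify the segment $[\sigma,T]$ directly from the boundary representation \eqref{generalboundary}, whereas the paper extends the identity to $T_0$ by a monotone-limit argument on the second integral; both rest on the same nonpositivity of the integrand, which you correctly single out as the delicate point.
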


\begin{proof}
Let $(c,Y)$ be an admissible couple for the auxiliary problem such
that $Y_s>l(a)$, for every $s\ge t$. By applying standard
differential calculus to $e^{-(\rho+\lambda)s}\hat v(s,Y_s,a)$
between $s=t$ and $s=T$, we have:
\begin{align*}
&e^{-(\rho+\lambda)T}\hat v(T,Y_T,a)-e^{-(\rho+\lambda)t}\hat v(t,x,a)\\
& \ =\int_t^Te^{-(\rho+\lambda)s}\left[\frac{\partial \hat
v(s,Y_s,a)}{\partial t}-(\rho+\lambda)\hat
v(s,Y_s,a)-c_s\frac{\partial \hat v(s,Y_s,a)}{\partial
x}\right]\ud
s\\
& \quad =\int_t^Te^{-(\rho+\lambda)s}\left[-\tilde
U\left(\frac{\partial \hat v(s,Y_s,a)}{\partial
x}\right)-g(s,Y_s,a)-c_s\frac{\partial \hat v(s,Y_s,a)}{\partial
x}\right]\ud s,
\end{align*}
where in the last equation we have used the fact that $\hat v$
satisfies \eqref{ipde}. This can be easily rewritten as
\eqref{identity} by adding and subtracting $U(c_s)$ in the
integrand. Now, from the growth condition \eqref{hatvgrowth} and
since $\hat v$ is nondecreasing in $x$, we have
$$
0 \leq \hat v(T,Y_T,a) \leq \hat v(T,x,a) \leq K(e^{bT}x)^\gamma
\quad {\rm a.s.}
$$
from which we deduce by Lemma 4.2 of~\cite{pt} that
$$
\lim_{T \to +\infty}e^{-(\rho+\lambda)T}\hat v\left(T,
Y_T,a\right)=0,\quad {\rm a.s.}
$$
Hence, by sending $T$ to infinity, we can easily derive the
relation \eqref{identityinf}. Let $(c,Y)$ be an admissible couple
such that $Y_{T_0}=l(a)$, for a $T_0<+\infty$. Assume that $T_0$
is the first time when this happens. Then $Y_s=l(a)$, and $c_s=0$
for every $s \ge T_0$. Then for $T<T_0$ we get \eqref{identity} as
before. Calling
\begin{equation*}
\begin{split}
I_T:=-\int_t^Te^{-(\rho+\lambda)s}\left[U(c_s) -c_s\frac{\partial
\hat v(s,Y_s,a)}{\partial x}-\tilde U\left(\frac{\partial \hat
v(s,Y_s,a)}{\partial x}\right)\right]\ud s,
\end{split}
\end{equation*}
we have that $I_T$ is increasing and from \eqref{identity} that
there exists its limit for $T\nearrow T_0$ given by:
$$
-e^{-(\rho+\lambda)T_0}\hat
v\left(T_0,l(a),a\right)+e^{-(\rho+\lambda)t}\hat
v(t,x,a)-\int_t^{T_0}e^{-(\rho+\lambda)(s-t)}\left[U(c_s)+g(s,Y_s,a)\right]\ud
s.
$$
>From the positivity of the integrand in $I_T$, we then get that
identity \eqref{identity} also holds in $T_0$. For $T>T_0$ we can
easily derive \eqref{identity} using the fact that the couple
$(c,Y)$ is constant after $T_0$ and that (ii) holds. Now, let us
focus on the last statement. Let $(c,Y)$ be an admissible couple
at $(t,x)$. Then $(c,Y)$ is optimal at $(t,x)$ if and only if in
\eqref{identityinf} we have
\begin{equation*}
\hat
v(t,x,a)=\int_t^{+\infty}e^{-(\rho+\lambda)(s-t)}\left[U(c_s)+g(s,Y_s,a)\right]\ud
s.
\end{equation*}
When $Y_s > l(a)$, for $s \ge t$, this is clearly equivalent to
\begin{equation*}
\int_t^{+\infty}e^{-(\rho+\lambda)(s-t)}\left[U(c_s)-c_s\frac{\partial
\hat v(s,Y_s,a)}{\partial x}-\tilde U\left(\frac{\partial \hat
v(s,Y_s,a)}{\partial x}\right)\right]\ud s=0,
\end{equation*}
i.e.
\begin{equation}\label{urel}
\tilde U\left(\frac{\partial \hat v(s,Y_s,a)}{\partial
x}\right)=U(c_s)- c_s\frac{\partial \hat v(s,Y_s,a)}{\partial
x},\quad {\rm for}\ {\rm a.e.}\ s \ge t.
\end{equation}
When $Y_s > l(a)$ on $(t,T)$, we have \eqref{urel} on $(t,T)$ and
$c_s=0$ on $[T,+\infty)$.
\end{proof}

\noindent Now we come to the proof of the Proposition
\ref{genuniqueness}. First we observe that, thanks to Proposition
\ref{smoothness} the assumptions (i)-(ii)-(iii) of the previous
Lemma \ref{identityrel} hold. So fix $(t,x,a) \in \D$. First we
prove the existence of a solution $\bar Y$ of the problem
\eqref{cpgen}. The dynamics of the system is the function $-\hat
c(\cdot,\cdot,a):\R_+ \times (l(a),+\infty) \to (0,+\infty)$, with
\eqref{genchat}, that is well-defined and continuous as
composition of continuous functions on $\R_+ \times
(l(a),+\infty)$. We note that hypothesis (ii) of Lemma
\ref{identityrel} implies $\hat c(t, l(a)^+,a)=0$, for every $t
\ge 0$. Hence, we can extend the function $\hat c(\cdot,\cdot,a)$
to a continuous function on $\R_+ \times (-\infty,+\infty)$ such
that $\hat c=0$ on $\R_+ \times (-\infty,l(a)]$. Now the Peano's
Theorem guarantees the existence of a local solution $\bar
Y_\cdot$ of \eqref{cpgen}. We prove that $(s,\bar Y_s,a) \in \D$
for every $s \geq t$, i.e. that
\begin{equation} \label{uno2}
\bar Y_s \geq l(a),\quad \mbox{for}\ s \geq t.
\end{equation}
If $x=l(a)$, we already know that $\hat c(s,l(a)^+,a)=0$, for $s
\geq t$, given $t$, so that
$\bar Y_s=l(a)$, for all $s \geq t$.\\
Now we suppose $x>l(a)$. Since $-\hat c(s,y,a)< 0$, for each
$(s,y) \in [t,+\infty) \times (l(a),+\infty)$, the solution $\bar
Y$ is strictly decreasing on the maximal interval that we denote
by $(t,T)$, with $T>0$. Suppose that there exists an instant
$t<t^\prime<T$ such that $\bar Y_{t^\prime}<l(a)$. We have that
$\ud \bar Y_{t^\prime}=0$. In particular this means that there
exists an interval $[t_0,t_1]\subset (t,T)$ with $\bar
Y_{t_0}=l(a)$ and $\bar Y_{t_1}<l(a)$ such that for all $s \in
(t_0,t_1]$, $\hat Y_s<l(a)$ with $\ud \bar Y_s(t,x,a)=0$, that it
is not possible. This proves the claim
\eqref{uno2}, for any $x \geq l(a)$ and that $T=+\infty$.\\
Now call $\bar c_s=\hat c(s,\bar Y_s,a)$ as in \eqref{gencbar}.
Then the couple $(\bar c,\bar Y)$ is admissible since $\bar
c_s\geq 0$, for every $s \ge t$ and $\bar Y_s \ge l(a)$, for $s
\ge t$. Moreover
$$
\tilde U\left(\frac{\partial \hat v}{\partial x}(s,\bar
Y_s,a)\right)=U(c_s)-\bar c_s\frac{\partial \hat v}{\partial
x}(s,\bar Y_s,a),\quad \mbox{for}\ {\rm a.e.}\ s\ge t,
$$
so the couple $(\bar c,\bar Y)$ is optimal at $(t,x)$ thanks to
Lemma \ref{identityrel}. Hence the existence of an optimal couple
for the auxiliary problem is proved. \\ Now we prove the
uniqueness.
Fix $a \in \A$, $x \geq l(a)$ and $t \geq 0$. Let $\bar c_1$,
$\bar c_2$ be optimal controls at $x$. Then for $i=1,2$
\begin{align*}
\hat v(t,x,a)&=\int_t^{+ \infty}e^{-(\rho +
\lambda)(s-t)}\left[U(\bar c_i(s))+g(s,\bar Y_s^{t,x}(\bar c_i),a)\right]\ud s\\
&=\int_0^{+\infty}e^{-(\rho + \lambda)s}\left[U(\bar
c_i(s))+g(s+t,\bar Y_s^x(\bar c_i),a)\right]\ud s,
\end{align*}
where for every $c \in \C_a(x)$, $Y_s^x(c)=x-\int_0^sc(u)\ud u,\ s
\geq 0$. Since the function $U$ is strictly concave, we have by
setting $c_\eta=\eta \bar c_1+(1-\eta)\bar c_2$, with $\eta \in
(0,1)$,
$$
U(c_\eta(s))=U\left(\eta \bar c_1(s)+(1-\eta)\bar
c_2(s)\right)>\eta_1U(\bar c_1(s))+(1-\eta)U(\bar c_1(s)), \quad
s\geq 0.
$$
Moreover, since $\bar Y_s^x(c_\eta)=\eta \bar Y_s^x(\bar
c_1)+(1-\eta)\bar Y_s^x(\bar c_2)$, for all $s \geq 0$ and $g$ is
concave in the second variable, we have
$$
g(s+t,\bar Y_s^x(c_\eta),a) \geq \eta g(s+t,\bar Y_s^x(\bar
c_1),a)+(1-\eta)g(s+t,\bar Y_s^x(\bar c_2),a),\quad  \forall s
\geq 0.
$$
Then
$$
\hat v(t,x,a)<\int_0^{+\infty}e^{-(\rho +
\lambda)s}\left[U(c_\eta(s))+g(s+t,\bar Y_s^x(c_\eta),a)\right]\ud
s,
$$
that implies the uniqueness of the control of the auxiliary
problem.
\begin{flushright}
$\square$
\end{flushright}

\end{document}